\crefname{theorem}{Theorem}{Theorems}
\crefname{lemma}{Lemma}{Lemmas}
\crefname{assumption}{Assumption}{Assumptions}
\crefname{subsection}{Section}{Sections}
\crefname{definition}{Definition}{Definitions}
\crefname{equation}{}{}
\crefname{proposition}{Proposition}{Propositions}
\crefname{figure}{Fig.}{Figs.}
\renewcommand{\appendixtocname}{Supplementary Material}
\let\oldappendix\appendices
\renewcommand{\appendices}{%
  \clearpage
  \renewcommand{\thesection}{\Roman{section}}
  \let\tf@toc\tf@app
  \addtocontents{app}{\protect\setcounter{tocdepth}{1}}
  \immediate\write\@auxout{%
    \string\let\string\tf@toc\string\tf@app^^J
  }
  \oldappendix
}%
\newcommand{\listofappendices}{%
  \begingroup
  \renewcommand{\contentsname}{\appendixtocname}
  \makeatletter
  \renewcommand*\l@section{\@dottedtocline{1}{1.5em}{2.3em}}
  \makeatother
  \let\@oldstarttoc\@starttoc
  \def\@starttoc##1{\@oldstarttoc{app}}
  \tableofcontents
  \endgroup
}
\definecolor{myred}{HTML}{CE2D2C}
\definecolor{myblue}{HTML}{3665BF}
\definecolor{mygreen}{HTML}{628B48}
\definecolor{myorange}{HTML}{FF8F00}
\definecolor{myviolet}{HTML}{9104AC}
\newtheorem{theorem}{Theorem}[section]
\newtheorem{remark}{Remark}[section]
\newtheorem{assumption}{Assumption}[section]
\newtheorem{proposition}{Proposition}[section] 
\newtheorem{lemma}[theorem]{Lemma} 
\newtheorem{definition}{Definition}
[section]
\newtheorem{fact}{Fact}[section]
\newcommand{\R}{\mathbb{R}}
\newcommand{\N}{\mathbb{N}}
\newcommand{\J}{\mathcal{J}}
\newcommand{\rz}[1]{#1}
\newcommand{\mf}[1]{#1}
\newcommand{\mfnew}[1]{#1}
\newcommand{\rzbegin}{}
\newcommand{\rzend}{}
\tikzset{cross/.style={cross out, draw=black, minimum size=2*(#1-\pgflinewidth), inner sep=0pt, outer sep=0pt},
cross/.default={1pt}}
\definecolor{lightpurple}{RGB}{230, 210, 250}
\title{Loss-aware distributionally robust optimization\\ via trainable \mfnew{optimal transport} ambiguity sets}
\author{%
  Jonas Ohnemus \\
  ETH Zürich \\
  \texttt{johnemus@ethz.ch} \\
  \And
  Marta Fochesato\\
  ETH Zürich\\
  \texttt{mfochesato@ethz.ch} \\
  \And
  Riccardo Zuliani \\
  ETH Zürich \\
  \texttt{rzuliani@ethz.ch} \\
  \And
  John Lygeros \\
  ETH Zürich \\
  \texttt{jlygeros@ethz.ch}
}
\begin{document}

\maketitle

\begin{abstract}

{Optimal-Transport Distributionally Robust Optimization (OT-DRO) robustifies data-driven decision-making under uncertainty by capturing the sampling-induced statistical error via optimal transport ambiguity sets. The standard OT-DRO pipeline consists of a two-step procedure, where the ambiguity set is first designed and subsequently embedded into the downstream OT-DRO problem. However, this separation between uncertainty quantification and optimization might result in excessive conservatism. We introduce an end-to-end pipeline to automatically learn decision-focused ambiguity sets for OT-DRO problems, where the loss function informs the shape of the optimal transport ambiguity set, leading to less conservative yet distributionally robust decisions. We formulate the learning problem as a bilevel optimization program and solve it via a hypergradient-based method. By leveraging the recently introduced nonsmooth conservative implicit function theorem, we establish convergence to a critical point of the bilevel problem. We present experiments validating our method on standard portfolio optimization and linear regression tasks.}
\end{abstract}

\section{Introduction}
\label{ch:intro}

Optimal Transport Distributionally robust optimization (OT-DRO) has recently emerged as a principled framework for decision-making under uncertainty due to its ability to capture distributional \rz{uncertainty} arising, for example, from sampling. The OT-DRO problem can be thought of as \rz{the} following zero-sum game
\begin{equation}
   \inf_{w\in\mathcal{W}} \sup_{\mathbb{Q} \in \mathcal{A}} \mathbb{E}_{\xi \sim \mathbb{Q}} \left[\ell(w,\xi) \right],\label{eq:decision_making_under_uncertainty}
\end{equation}
\rz{where the} decision-maker chooses a decision $w \in \mathcal{W} \subseteq \mathbb{R}^k$ to minimize the expectation of the loss function $\ell$, and \rz{the} adversary chooses a distribution $\mathbb{Q}$ from a so-called ambiguity set $\mathcal{A} \subseteq \mathcal{P}(\Xi)$ with $\mathcal{P}(\Xi)$ denoting the set of probability distributions supported on $\Xi \subseteq \mathbb{R}^d$.  
The OT-DRO problem in \eqref{eq:decision_making_under_uncertainty} has gained attention over the past years thanks to applications in machine learning \cite{shafieezadeh2015distributionally,blanchet2019robust,ho2023adversarial}, portfolio optimization \cite{blanchet2022distributionally}, control \cite{taskesen2023distributionally,aolaritei2023wasserstein}, and power systems \cite{poolla2020wasserstein}, among others. 

The traditional deployment of OT-DRO \eqref{eq:decision_making_under_uncertainty} relies on two steps carried out sequentially. In the first step, the ambiguity set is designed on the basis of the empirical observations, typically in the form of independent samples $\{\hat{\xi}_1, \ldots, \hat{\xi}_J\}$    extracted from an unknown probability distribution $\mathbb{P}$. \rz{A} natural way to integrate this information is by \rz{constructing} an ambiguity set 
\begin{equation}
    \mathcal{A} := \mathcal{B}_\varepsilon(\hat{\mathbb{P}}) = \left\{ \mathbb{Q} \in \mathcal{P}(\Xi) \; \vert \; d(\mathbb{Q}, \hat{\mathbb{P}}) \leq \varepsilon \right\},\label{eq:ambiguity_set}
 \end{equation}
\rz{defined as} a ball of radius $\varepsilon \in \mathbb{R}_{\geq 0}$ centered around the empirical distribution $\hat{\mathbb{P}} = \frac{1}{J} \textstyle \sum_{j=1}^J \delta_{\hat{\xi}_j}$. Here, $d:\mathcal{P}(\Xi) \times \mathcal{P}(\Xi) \rightarrow [0,+\infty)$ is an optimal transport based discrepancy defined as $d (\mathbb{P}, \hat{\mathbb{P}}) := \inf_{\pi \in \Pi(\mathbb{P}, \hat{\mathbb{P}})} \mathbb{E}_{\xi_1, \xi_2 \sim \pi} [\kappa(\xi_1, \xi_2)]$, where $\kappa:\Xi \times \Xi \rightarrow [0, +\infty)$ is a prescribed transportation cost function satisfying the identity of indiscernibles, and $\Pi(\mathbb{P}, \hat{\mathbb{P}})$ represents the set of all joint probability distributions of $\xi_1, \xi_2$ with marginals $\mathbb{P}, \hat{\mathbb{P}}$, respectively. In the second step, $\mathcal{A}$ is embedded into \eqref{eq:decision_making_under_uncertainty} and the resulting problem is solved either by resorting to finite-dimensional reformulations grounded on duality theory \cite{gao2023distributionally,blanchet2019quantifying}, or on stochastic gradient methods \cite{yu2022fast,li2019first,li2020fast}. If the ambiguity set is designed \rz{to} contain the true distribution $\mathbb{P}$ with high probability, DRO theory offers an out-of-sample certificate ensuring that when deploying the obtained minimizer on new, unseen data, \textit{post-decision disappointment} \rz{does not occur} with high probability (unlike standard empirical risk minimization methods, such as sample average approximation \cite{smith2006optimizer}). 



This traditional OT-DRO pipeline can result in an overly conservative decision, as the downstream optimization problem does not inform the geometry of the ambiguity set. Consider, for example, the choice of the transportation cost as $\kappa(\xi_1, \xi_2) = \|\xi_1 - \xi_2\|^p_2$, resulting in the celebrated type-p Wasserstein distance \cite{esfahani2017datadrivendistributionallyrobustoptimization}. Intuitively, this choice requires that the true distribution deviates little from the empirical estimate in \textit{all} directions. In reality, however, our primary concern is to exclude only those distributions that actively contribute to increasing the worst-case cost, rather than imposing uniform constraints in all directions. This intuition motivates the introduction of a new OT-DRO methodology, where the geometry of the ambiguity set --- which represents a degree of freedom that has not been exploited thus far --- is informed by the loss function $\ell$ to ensure coverage of the true distribution while selectively excluding adversarial distributions that inflate the worst-case cost.


\begin{tcolorbox}[colback=lightpurple!25, colframe=white, boxrule=0pt, arc=2pt, left=4pt, right=4pt, top=4pt, bottom=4pt]

Our contributions are as follows. 

\begin{itemize}[left=0pt]

{
\item \textbf{End-to-end OT-DRO pipeline.} We introduce a novel \textit{end-to-end} OT-DRO pipeline to automatically learn decision-focused ambiguity sets $\mathcal{A}_\theta$, defined by some parameter $\theta \in \mathbb{R}^{n_\theta}$, leading to less conservative, yet distributionally robust solutions. Specifically, among all ambiguity sets leading to the same out-of-sample disappointment $\beta$, we want to determine the one ensuring minimum out-of-sample risk. 


\smallskip

\item \textbf{Algorithmic solution.} We encode the problem as a bilevel optimization program where the upper level chooses a geometry for $\mathcal{A}_\theta$, for example via a parametrized transportation cost $\kappa(\cdot, \cdot; \theta)$, while the lower level solves \eqref{eq:decision_making_under_uncertainty} for the chosen ambiguity set $\mathcal{A}_\theta$.
To solve the bilevel problem, we use a hypergradient-based method based on the recently introduced nonsmooth conservative implicit function theorem, and we show that, under mild conditions, our numerical scheme provably converges to the set of critical points of the bilevel program (or to a neighbourhood of them) despite the nonsmoothness and nonconvexity of the solution map of the lower level.

\smallskip

\item \mfnew{\textbf{Software.} We present an open-source implementation of our algorithm in Python. We make our code available at: \url{https://github.com/JonasOhn/trainable-ot-dro}.}}

\end{itemize}
\end{tcolorbox}

\section{Related works}

\textbf{Metric learning}.  
%

\mfnew{Metric learning \cite{bellet2013survey} (also referred to as "smart predict-and-optimize" in operations research \cite{elmachtoub2022smart}) refers to the paradigm of training a predictive model to minimize the 
loss on a downstream optimization task and has gained increasing attention across several domains \cite{donti2017task}, \cite{demirovic2019investigation}, \cite{wilder2019melding}, \cite{cameron2022perils} (see also the survey papers \cite{mandi2024decision,sadana2025survey}). Recently, it has also been applied to design decision-focused uncertainty sets \cite{chenreddy2022data,sun2023predict,wang2023learning,chenreddy2024end}.}
\rzbegin
In particular, \cite{wang2023learning} addresses contextual stochastic optimization, where the goal is to learn an uncertainty set (in $\mathbb{R}^n$) that maximizes expected performance across a family of contextual problems. By contrast, we consider distributional ambiguity and robustify in the space of probability distributions.
Within OT-based DRO, \cite{blanchet2019datadrivenoptimaltransportcostselectionfordro} calibrates a Mahalanobis distance to penalize directions with high performance impact. However, their approach is limited to linear regression and requires a separate calibration step, in contrast to our end-to-end framework, where the DRO loss directly informs the set design to reduce conservatism. \cite{behzadian2019reshapingambiguitysetsinrobustmdps} learns norm weights in robust Markov decision processes with finite state-action spaces, while \cite{schuurmans2023distributionallyrobustoptimizationusing} selectively enlarges ambiguity sets of discrete distributions in directions with limited effect on the worst-case cost. However, neither approach extends to continuous distributions or supports flexible parameterizations of the transport cost. Further, \cite{costa2023distributionally} leverages residual structure of the uncertainty distribution, limiting general applicability, and \cite{ma2024differentiable} differentiates through conic programs to learn conic-representable sets for mixed-integer DRO, but overlooks the nondifferentiability of the solution map. Finally, compared to \cite{chaouach2023structured}, our approach reduces conservatism without relying on independence assumptions among features. 


\rzend

\textbf{Differentiable optimization.} Differentiable optimization refers to the practice of differentiating the solution map of optimization problems, generally by applying the implicit function theorem to their optimality conditions \cite{dontchev2009implicit}. This idea has been applied to differentiate the solution map of quadratic programs \cite{amos2017optnet}, linear programs with a regularizing term \cite{mandi2020interior}, and linear conic programs \cite{busseti2019solution,agrawal2019differentiating}. As a direct consequence of the implicit function theorem, all these methods implicitly assume continuous differentiability of the solution map with respect to the problem parameters. However, the solution map of an (even convex) optimization program is generally not everywhere differentiable. To relax the continuous differentiability assumption, one can utilize the concept of conservative Jacobians \cite{bolte2021conservative}, which extend traditional gradients to almost everywhere differentiable functions, and, most notably, admit a nonsmooth implicit function theorem \cite{bolte2021nonsmooth}. Leveraging this concept, \cite{zuliani2025bp} develops a first-order method with convergence guarantees to solve a bilevel problem with a quadratic lower level. In \cite[Proposition 4]{bolte2021nonsmooth}, the authors apply the nonsmooth implicit function theorem in the context of conic programs. This work is a fundamental building block for the algorithm used in this paper.



\section{Preliminaries}
\label{ch:preliminaries}
\textbf{Notation.} We assume an underlying probability space $(\Omega, \mathcal{F}, P)$ and define the distribution of any random vector $\xi:\Omega \rightarrow \mathbb{R}^d$ by the pushforward distribution $\mathbb{P} = P \circ \xi^{-1}$ of $P$ with respect to $\xi$. \mf{$\mathcal{P}(\Xi)$ denotes the set of probability distributions on $\Xi \subseteq \mathbb{R}^d$ and $\mathcal{P}_g(\Xi)$ its restriction to the set of Gaussians. We use $\mathcal{N}(\mu, \Sigma)$ to denote a Gaussian distribution with mean $\mu \in \mathbb{R}^d$ and covariance $\Sigma \in \mathbb{R}^{ d\times d}$, and $\mathcal{U}(a,b)$ to denote a uniform distribution in the interval $[a,b]$.} For $n \in \mathbb{Z}_+$, we set $[n] = \{1,\ldots, n\}$. \rz{Given a probability distribution $\mathbb{P}$} and a set $\mathcal{X}$, \mf{we use $\mathbb{P}^n \coloneqq \mathbb{P} \times \ldots \times \mathbb{P}$ and $\mathcal{X}^{\otimes n} \coloneqq \mathcal{X} \otimes \ldots \otimes \mathcal{X}$ to denote the product distribution and the product set, respectively. We denote the Euclidean norm with $\|\cdot\|$ and use $\text{dist}(x, \mathcal{X}) \coloneqq \inf \{ \| x - z\| \; | \; z \in \mathcal{X} \}$ to denote the point-to-set distance. Lastly, $\mathbb{L}_{++}^n$ denotes the set of positive definite lower triangular matrices.}

\textbf{Path differentiability.} \emph{Conservative Jacobians} can be used to generalize the notion of Jacobian to functions that are almost everywhere differentiable \cite{bolte2021conservative}. Specifically, given a locally Lipschitz function $f:\R^n\to\R^m$, we say that the outer semicontinuous, compact-valued map $\J_f:\R^n\rightrightarrows \R^{m \times n}$ is a conservative Jacobian of $f$ if, given any absolutely continuous function $\varphi:[0,1]\to \R^n$, $\frac{d}{dt}f(\varphi(t))= V\dot{\varphi}(t)$ for any $V\in \J_f(\mfnew{\varphi(t)})$ and almost every $t\in[0,1]$. By Rademacher's theorem, $\nabla f(x)$ exists for almost every $x\in\R^n$, in which case, by \cite[Theorem 1]{bolte2021conservative}, $\J_f(x)=\{\nabla f(x)\}$, meaning that $\J_f$ coincides almost everywhere with the standard Jacobian. We say that a function $f$ is \emph{path-differentiable} if it admits a conservative Jacobian. Here, we focus on the class of locally Lipschitz functions that are \textit{definable in an o-minimal structure}, or simply \textit{definable} \cite{coste1999introduction} (\mfnew{see also Appendix \ref{app:definability} for a concise explanation}), which always admit a conservative Jacobian \cite[Proposition 2]{bolte2021conservative}. This class of functions contains most functions commonly found in the fields of control and optimization, including semialgebraic functions and analytic functions restricted to a definable domain.

Given a locally Lipschitz definable function $f:\R^n\to\R$ and a sequence of positive step sizes $\{\alpha_i\}_{i\in\N}$, the update rule
\begin{equation}
    x_{i+1} = x_i-\alpha_id_i, ~~d_i\in \J_f(x_i),
\end{equation}
is guaranteed to converge to a critical point $\bar{x}$ for which $0\in \J_f(\bar{x})$ if $\alpha_i>0$ is square summable but not summable \cite[Theorem 3.2]{davis2020stochastic}. If $\alpha_i\equiv\bar{\alpha}$, then for a small enough $\bar{\alpha}$, $\limsup_{i\to \infty} \operatorname{dist}(0,\J_f(x_i))\leq \epsilon$, where $\epsilon>0$ can be made arbitrarily small by reducing $\bar{\alpha}$ \cite[Theorem 2]{bolte2024inexact}.


\textbf{Differentiating through conic programs.} Conic programs are a broad class of optimization problems of the form
\begin{equation}\label{standard:conic}
    \begin{aligned}
        \min_{x,s} \quad & {c}^\top x \\
        \textrm{subject to} \quad & {A}x + s = {b} \\
        & (x, s) \in \mathbb{R}^n \times \mathcal{K},
    \end{aligned}
\end{equation}
where $\mathcal{K}$ is a closed convex cone such as the nonnegative orthant or \rz{the} second-order cone. Assuming $(A,b,c)=(A(\theta),b(\theta),c(\theta))$ depend on a parameter $\theta\in\R^{n_\theta}$, one can treat \rz{the primal-dual solution $(x^*(\theta),y^*(\theta),s^*(\theta))$ of \eqref{standard:conic} (assuming its existence)} as a function of $\theta$ and write the solution map as $\mathcal{S}:\R^{n_\theta}\to\R^n$, where \rz{$\mathcal{S}(\theta)=(x^*(\theta),y^*(\theta),s^*(\theta))$}. By implicitly differentiating the KKT conditions of \eqref{standard:conic}, it is possible to obtain the conservative Jacobian $\J_\mathcal{S}(\theta)$ of $\mathcal{S}$ \cite{bolte2021nonsmooth}. A more thorough description of the differentiation procedure is provided in Appendix~\ref{app:diff_through_conic_programs}.
%
    

\section{Learning the uncertainty in OT-DRO}
\label{ch:probform}


\subsection{Problem formulation}
\rz{We modify the OT-DRO problem \cref{eq:decision_making_under_uncertainty} by replacing $\mathcal{A}$ with a parametrized ambiguity set $\mathcal{A}_\theta$ given by}
\begin{equation}\label{eq:ambiguity:set:parametrized}
\rz{\mathcal{A}_\theta := \mathcal{B}_\varepsilon(\hat{\mathbb{P}};\theta) := \{ \mathbb{Q} \in \mathcal{P}(\Xi) \: |\: d(\mathbb{Q}, \hat{\mathbb{P}};\theta) \leq \varepsilon\},}
\end{equation}
\rz{where the parameter $\theta \in \Theta \subseteq \mathbb{R}^{n_\theta}$ affects the optimal transport based discrepancy $d(\mathbb{Q}, \hat{\mathbb{P}};\theta):=\inf_{\pi \in \Pi(\mathbb{P}, \hat{\mathbb{P}})} \mathbb{E}_{\xi_1, \xi_2 \sim \pi} [\kappa(\xi_1, \xi_2; \theta)]$ through the transportation cost $\kappa(\cdot,\cdot;\theta)$, thus defining the geometry of the set.} This leads to the parameterized OT-DRO problem
\begin{equation}
   \inf_{w\in\mathcal{W}} \sup_{\mathbb{Q} \in \mathcal{A}_\theta} \mathbb{E}_{\xi \sim \mathbb{Q}} \left[\ell(w,\xi) \right].\label{eq:decision_making_under_uncertainty_parameterized}
\end{equation}
\mf{In the remainder, we always assume that \eqref{eq:decision_making_under_uncertainty_parameterized} is well-posed for all $\theta \in \Theta$, that is, it admits a finite minimizer $\hat{w}_\theta$. Conditions for well-posedness of problem \eqref{eq:decision_making_under_uncertainty_parameterized} have been established in \cite{yue2022linear}.} Let $\hat{\ell}(\hat{w}_\theta)$ be the corresponding optimal solution, and let $\ell^\star :=\inf_{w \in \mathcal{W}}\mathbb{E}_{\xi \sim\mathbb{P}}[\ell(w, \xi)]$ be the optimal loss under complete knowledge of the distribution $\mathbb{P}$. Our goal is to determine a parameter vector $\theta^\star$ \rz{attaining} to the lowest in-sample loss $\hat{\ell}(\hat{w}_{\theta^\star})$ satisfying
\begin{equation}\label{eq:certificate}
\text{Pr}(\ell^\star \leq  \mathbb{E}_{\xi \sim\mathbb{P}}[\ell(\hat{w}_{\theta^\star}, \xi)] \leq \hat{\ell}(\hat{w}_{\theta^\star})) \geq 1 - \beta,
\end{equation}
for a given user-defined reliability parameter $\beta \in (0,1)$. Equation \eqref{eq:certificate} represents an out-of-sample performance certificate on the data-driven decision $\hat{w}_{\theta^\star}$. By \rz{maximally reducing} $\hat{\ell}(\hat{w}_{\theta^\star})$ \rz{through} a careful design of the geometry of $\mathcal{A}_\theta$, the out-of-sample loss $\mathbb{E}_{\xi \sim\mathbb{P}}[\ell(\hat{w}_{\theta^\star}, \xi)]$ \rz{gradually approaches} the true loss $\ell^\star$, which represents a fixed problem-specific global lower bound.

\rz{Since} \eqref{eq:certificate} is implied by the condition $\text{Pr}(\mathbb{P} \in \mathcal{B}_\epsilon(\hat{\mathbb{P}};\theta^\star)) \geq 1 - \beta$, we can formalize the problem as
\begin{subequations}\label{eq:formulation:1}
    \begin{align}\label{eq:cost}
       \hat{\ell}({w}_{\theta}) :=   \inf_{w\in\mathcal{W}, \, \theta\in \Theta} \sup_{\mathbb{Q} \in \mathcal{B}_\varepsilon(\hat{\mathbb{P}};\theta)}\quad & \mathbb{E}_{\xi \sim \mathbb{Q}} \left[ \ell(w, \xi) \right] \\ \label{constraint}
         \textrm{subject to} \quad &\mathrm{Pr}(\mathbb{P} \in \mathcal{B}_\varepsilon(\hat{\mathbb{P}};\theta)) \geq 1-\beta.
    \end{align}
    \label{eq:dream_opt_problem}
\end{subequations}

\begin{wrapfigure}{r}{0.3\textwidth}
    \vspace{-1.5em}
    \centering
    \begin{tikzpicture}[scale=0.8]  

    \draw[very thick, myviolet, fill, fill opacity = 0.2] (0,0) circle (1.75);

    \draw[very thick, myorange, rotate around={45:(0,0)}, fill, fill opacity = 0.275] (0,0) ellipse (1 and 3);

    \filldraw (0,0) circle (2.5pt);
    \node[below,yshift=-2pt] at (0,0) {$\hat{\mathbb{P}}$};

    \filldraw (-1,1) circle (2.5pt);
    \node[below,yshift=-2pt] at (-1,1) {$\mathbb{P}$};

    \node[myviolet] at (0.55,2.1) {$\mathcal{A}_{\theta_1}$};
    \node[myorange] at (0.85,-2.5) {$\mathcal{A}_{\theta_2}$};

    \draw[->, dashed, very thick] (1.3,1.3) -- node[pos=1,above right, xshift=-20pt] {$\uparrow \ell$} (2.15,2.15);
    \draw[->, dashed, very thick] (-1.3,-1.3) -- node[pos=0.6,above left] {$\downarrow \ell$} (-2,-2);

    \draw (0.7,0.7) node[ultra thick,cross=4pt,rotate=45,myred] {};
    \draw (1.225,1.225) node[ultra thick,cross=4pt,rotate=45,myblue] {};

\end{tikzpicture}
    \caption{Simplified problem.}
    \label{fig:ambset_reshaping_sketch}
    \vspace{-2.5em}
\end{wrapfigure}
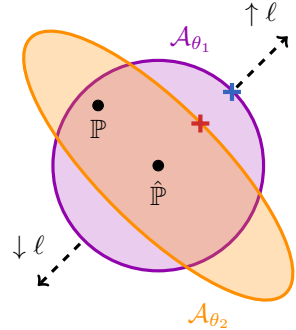

The interpretation is that among all the ambiguity sets containing the true probability distribution $\mathbb{P}$ with \rz{high likelihood}, we select the one resulting in the lowest value of $\hat{\ell}$.

\rz{As an example, consider the simplified problem in Figure~\ref{fig:ambset_reshaping_sketch}, where the loss increases in the north-east direction. If the orange and purple ambiguity sets both contain $\mathbb{P}$ with probability $1-\beta$, then choosing the orange set is more convenient, as this set produces a less conservative solution (red cross) compared to the purple set (blue cross) while maintaining the same out-of-sample guarantees. }

\rz{We propose an automated pipeline to solve \eqref{eq:formulation:1} that solely relies on the structure of the loss function $\ell$ and on the availability of $J$ samples $\{\hat{\xi}_1, \ldots, \hat{\xi}_J\}$ without requiring knowledge of $\mathbb{P}$.}


\subsection{A bilevel formulation}
\label{ch:bilevelopt}
We consider a bilevel surrogate of \eqref{eq:formulation:1} \rz{that exploits} the structural properties of the problem
\begin{equation}\label{eq:init_bilevel_opt_problem}
    \begin{aligned}
        \inf_{\theta \in \Theta}  \sup_{\mathbb{Q} \in \mathcal{B}_\varepsilon(\hat{\mathbb{P}};\theta)} & \mathbb{E}_{\xi \sim \mathbb{Q}} \left[ \ell(w_\theta^\star, \xi) \right]  \\
        \textrm{subject to} \quad & \mathrm{Pr}(d(\mathbb{P}, \hat{\mathbb{P}}; \theta) \leq \varepsilon) \geq 1-\beta\\
        & w^\star_\theta = \arg\inf_{w\in\mathcal{W}} \sup_{\mathbb{Q} \in \mathcal{B}_\varepsilon(\hat{\mathbb{P}};\theta) } \mathbb{E}_{\xi \sim \mathbb{Q}} \left[ \ell(w, \xi) \right].
    \end{aligned}
\end{equation}

Compared to \eqref{eq:dream_opt_problem}, the bilevel formulation in \eqref{eq:init_bilevel_opt_problem} \rz{separates} the joint minimization over $w$ and $\theta$ across two different levels, with the shape-inducing variable $\theta$ being optimized at the upper level and the task-related variable $w$ being optimized at the lower level. We can think of \eqref{eq:init_bilevel_opt_problem} as the problem of \textit{tuning} an optimal transport-based ambiguity set.

\textbf{Lower level.} For a fixed $\theta \in \Theta$, the lower-level constitutes a standard OT-DRO problem of the form \eqref{eq:decision_making_under_uncertainty}. Under mild regularity conditions listed in Appendix \ref{app:dro_cp_reformulation}, \eqref{eq:decision_making_under_uncertainty} admits a finite-dimensional convex reformulation
\begin{align}\label{eq:conic:problem:DRO}
    \mathcal{S}\rz{(\theta)} = \begin{cases}
        \arg\min &c(\theta)^\top x\\
        \mathrm{s.t.} &A(\theta)\;x + s = b(\theta)\\
        &s \in \mathcal{K}.
    \end{cases}
\end{align}
where $\mathcal{S}(\theta) = (x^\star(\theta),y^\star(\theta),s^\star(\theta))$ in the solution map of \cref{eq:conic:problem:DRO} and it groups the primal variable, the dual variable, and the slack variable, respectively, while $\mathcal{K}$ is a convex and closed cone. The original variable $w$ is a block entry of the primal variable $x$, and can be easily extracted from it. Problem \eqref{eq:conic:problem:DRO} is a parametrized conic program, where the functional representation of $(A(\theta), b(\theta), c(\theta))$ depends on the specific parametrization of the transportation cost $\kappa(\cdot, \cdot;\theta)$. \mf{We rely on the following standing assumption.}
\begin{assumption}\label{assumption:unique}
\mf{The minimizer of the lower level of \eqref{eq:init_bilevel_opt_problem} is unique for all $\theta \in \Theta$.}
\end{assumption}
\mfnew{Assumption \ref{assumption:unique} is typically employed in the context of implicit differentiation, see e.g. \cite{bolte2021nonsmooth}.}


Examples of suitable parameterizations $\kappa(\cdot,\cdot;\theta)$ of the transportation cost that lead to formulations of the form \eqref{eq:conic:problem:DRO} include:
\begin{enumerate}
\item \emph{Mahalanobis distance}: Let $L \in \mathbb{L}_{++}^d$ be a positive definite lower triangular matrix, and $p \in [0,\infty)$. Then, for $\theta = L$, the transportation cost $\kappa(\xi_1, \xi_2; \theta) = \norm{L^\top (\xi_1 - \xi_2)}_2^p$ encode anisotropic features in the form of different sensitivities along different directions in $\mathbb{R}^d$. \mfnew{More specifically, using the singular value decomposition of $L = U \Pi V^\top$, with $\{\pi_i\}_{i=1}^d$ being the singular values of $L$ and $\{v_i\}_{i=1}^d$ the orthonormal columns of $V$, the transportation cost becomes $\|L(\xi_1 - \xi_2)\|^p_2 = \left({\sum_{i=1}^d \pi_i^2|v_i^\top(\xi_1 - \xi_2)|^2}\right)^{p/2}$. Thus, moving probability mass from the center distribution in the direction $v_i$  costs $\pi_i |\xi_1 - \xi_2|$: the higher the value of $\pi_i$ , the less probability mass is moved in the direction $v_i$ and vice versa. }
\item \emph{Functional composition of norms}: Let $f: \mathbb{R}_{\geq 0} \times \R^d \rightarrow \mathbb{R}_{\geq 0}$ be a continuous and strictly convex function parametrized for any $\theta \in \mathbb{R}^d$ (i) $x_1 \leq x_2 \implies f(x_1;\theta) \leq f(x_2;\theta)$, (ii) $\lim_{x \rightarrow \infty} \frac{f(x;\theta)}{x} \rightarrow \infty$, (iii) $f(0;\theta) = 0$. Then, $f(\|\xi_1 - \xi_2\|;\theta)$ is a valid parametrization. Possible choices of $f$ include convex combinations of norms, the (scaled) exponential and logarithmic functions, and the maximum of quadratic functions.
\item \emph{\mfnew{Conic} combination of transportation costs}: Let $\kappa_1(\xi_1, \xi_2),\ldots, \kappa_m(\xi_1,\xi_2)$ be valid transportation costs; then $\sum_{i=1}^m \theta_i \kappa_i(\xi_1, \xi_2)$, \mfnew{with $\theta_i \geq 0$}, is a valid parametrization. \mfnew{If the dictionary $\{\kappa_i\}_{i=1}^m$ is complete\footnote{\mfnew{By complete we mean that the dictionary spans the entire space of admissible transportation costs defined as $\mathcal{T} \coloneqq \{ \kappa(x,y) \: \text{symmetric}, \kappa(x,x) = 0, k(x,y) > 0 \: \text{iff} \: x \neq y\}$. For example, if the dictionary only includes Euclidean distances possibly raised to some powers, their conic hull cannot approximate non-metric costs. Nonetheless, even non-complete dictionaries are often "rich" enough in practice.}}, this parametrization universally determines all possible geometries of the ambiguity set.}
\end{enumerate}

\textbf{Upper level.} The upper-level problem is given by
\begin{subequations}\label{eq:bilevel_problem_unknown_dist}
    \begin{align}\label{cost:upper}
        \min_{\theta \in \Theta} \quad & c(\theta)^\top x^\star\left(\theta\right) \\
        \textrm{subject to} \label{constraint:upper}
        \quad &\mathrm{Pr}(d(\mathbb{P}, \hat{\mathbb{P}}; \theta) \leq \varepsilon) \geq 1-\beta.
    \end{align}
\end{subequations}
\rz{The probability in \eqref{constraint:upper} is taken with respect to the dataset $\hat{\mathcal{D}}_J := \{\hat{\xi}_1, \ldots, \hat{\xi}_J\}$ used to construct the reference distribution $\hat{\mathbb{P}}:= \hat{\mathbb{P}}(\hat{\mathcal{D}}_J) = \frac{1}{J} \textstyle \sum_{j=1}^J \delta_{\hat{\xi}_j}$. The set $\hat{\mathcal{D}}_J$ is a realization of the random multi-sample $\mathcal{D}_J$ distributed according to $\mathbb{P}^{J}$ and supported on $(\Xi)^{\otimes J}$.}

\rz{Note that \eqref{constraint:upper} depends on the true distribution $\mathbb{P}$, which is not known in our setting. We can approximate the probability in \eqref{constraint:upper} by bootstrapping samples of $\mathcal{D}_J$ from the set $\hat{\mathcal{D}}_J$ with replacement, obtaining}
\begin{equation}
\begin{aligned}
    \mathrm{Pr}(d(\mathbb{P},\hat{\mathbb{P}};\theta) \leq \varepsilon) & \approx \mathop{\mathbb{E}}_{{\mathcal{D}}_J\sim \mathbb{P}^{ J}}\left[ \mathds{1}\left\{ d(\mathbb{P}, \hat{\mathbb{P}}({\mathcal{D}}_J);\theta) \leq \varepsilon \right\} \right]\\
   &  \approx \frac{1}{n_b} \sum_{k=1}^{n_b} \mathds{1}\left\{ d(\hat{\mathbb{P}}, \hat{\mathbb{P}}({\hat{\mathcal{D}}}_J^k);\theta) \leq \varepsilon \right\}.
\end{aligned}
\end{equation}
where $n_b$ denotes the number of multi-samples $\hat{\mathcal{D}}_J^k$ extracted from \mf{$\hat{\mathbb{P}}$}. \rz{We can then rewrite \eqref{constraint:upper} as}
\begin{equation}\label{eq:constraint:boot}
\frac{1}{n_b}\sum_{k=1}^{n_b} \mathds{1}\left\{ d(\hat{\mathbb{P}}, \hat{\mathbb{P}}(\hat{\mathcal{D}}_J^k); \theta) \leq \varepsilon \right\} \geq 1 - \beta,
\end{equation}
\rz{where we require the distance between the nominal distribution and at least a $1-\beta$ fraction of the bootstrapped distributions to not exceed $\varepsilon$. Notice that \eqref{eq:constraint:boot} can effectively be implemented with the available information.}

\section{Algorithm design}
\label{ch:algorithm}


In this section, we devise a hypergradient-based algorithm with convergence guarantees to solve the problem in \eqref{eq:init_bilevel_opt_problem}. 
The main challenges lie in (i) ensuring that $\theta$ belongs to the feasible set $\Theta := \{ \theta \in \mathbb{R}^p\: : \: \eqref{eq:constraint:boot} \: \text{holds}\}$, and (ii) obtaining the hypergradient, i.e., the gradient of the upper-level objective function $c(\theta)^\top \mathcal{S}(\theta)$ with respect to $\theta$.

Generally, $\Theta$ is a nonconvex set as the condition in \eqref{eq:constraint:boot} is not convex in $\theta$ (see Appendix \ref{ch:convexity_param_discrete_wassdist} for a proof in the case of the Mahalanobis distance). To avoid computationally expensive projection operations, we solve instead the following unconstrained optimization problem, where \rz{the constraints are replaced with a penalty function}
\begin{equation}
    \begin{aligned}
        \min_{\theta}~~& \varphi (\theta) := \underbrace{c(\theta)^\top x^\star(\theta)}_{:=\varphi_\text{o}(\theta, x^\star(\theta))} + \underbrace{\lambda_\mathrm{p} \max \{0,e(\theta)\}^2}_{:=\varphi_\text{p}(\theta)}. \label{eq:bilevel_solvable}
    \end{aligned}
\end{equation}
where
\begin{equation}\label{eq:penalty}
    e(\theta) = \left( \frac{1}{n_b} \sum_{k=1}^{n_b}\sigma \left(d(\hat{\mathbb{P}}, \hat{\mathbb{P}}(\hat{\mathcal{D}}_J^k); \theta)/\varepsilon - 1 \right) \right) - \beta,
\end{equation}
and $ \sigma(x) = [1+\exp\left( -\eta_\mathrm{p} x \right)]^{-1}$ is the sigmoid function. In \cref{eq:bilevel_solvable}, $\varphi_\text{p}(\theta)$ penalizes positive constraint violations $e(\theta)$ by a large coefficient $\lambda_p \in \mathbb{R}_{> 0}$. The expression in \cref{eq:penalty} is a smooth approximation of the indicator function in \eqref{eq:constraint:boot}, \rz{with $\eta_\mathrm{p} \in \mathbb{R}_{> 0}$ regulating the approximation accuracy---larger values yielding a better approximation}.

To obtain the hypergradient of the objective in \cref{eq:bilevel_solvable} we need the following assumption.
\begin{assumption}\label{ass:path_diff_bilevel}
The functions $c$ and $x^\star$ are locally Lipschitz and definable in $\theta$. For any ${\mathbb{P}}_1$ and ${\mathbb{P}}_2$, $d({\mathbb{P}}_1,{\mathbb{P}}_2;\theta)$ is locally Lipschitz and definable in $\theta$.
\end{assumption}
\cref{ass:path_diff_bilevel} is mild and has already been proposed and studied in the context of conic programming \cite{bolte2021nonsmooth,bolte2024differentiating}. In Appendix~\ref{app:diff_through_conic_programs} we provide more details outlining sufficient conditions under which \cref{ass:path_diff_bilevel} holds in our setting. Under \cref{ass:path_diff_bilevel}, the hypergradient of \eqref{eq:bilevel_solvable} can be obtained by applying the chain rule
\begin{align}
\J_\varphi(\theta) = \left\{ J_{\varphi_\text{o}} + 2\lambda_\mathrm{p} \max \{0,e(\theta)\} J_e: J_{\varphi_\text{o}}\in\J_{\varphi_o}(\theta),~ J_e\in\J_{e}(\theta) \right\}, \label{eq:chain_rule_1}
\end{align}
where 
\begin{equation}\J_{\varphi_\text{o}}(\theta) = \{ J_{\varphi_\text{o},\theta} + J_{x^\star}^{\top}J_{\varphi_\text{o},x}: [J_{\varphi_\text{o},\theta}~J_{\varphi_\text{o},x}]\in\J_{\varphi_\text{o}}(\theta,x^\star(\theta)),~J_{x^\star}\in\J_{x^\star}(\theta)\}.
\end{equation}
Computing the Jacobian of the constraint violation $e(\theta)$ requires differentiating the value function of an optimal transport problem with respect to $\theta$, as explained in Appendix~\ref{app:diff_ot}. The conservative Jacobian $\J_{x^\star}$ of $x^\star$ can be obtained by differentiating the solution of the conic program \cref{eq:conic:problem:DRO}, as explained in Appendix~\ref{app:diff_through_conic_programs}.

Given an element $J_{\varphi_\text{o}}(\theta_i)$ of $\J_{\varphi_\text{o}}(\theta_i)$, the update step for $\theta$ follows the hypergradient descent dynamics $\theta_{i+1}=\theta_i-\alpha_i J_{\varphi_\text{o}}(\theta_i)$, 
where $\alpha_i \in \mathbb{R}_{> 0}$. 
%
We summarize the proposed learning procedure in Algorithm~\ref{alg:learning_ambiguity}.

\begin{algorithm}[H]
    \renewcommand{\baselinestretch}{1.2}\selectfont
    \DontPrintSemicolon
    \SetAlgoLined
    \KwIn{initial guess $\theta_0$, samples $\hat{\mathcal{D}}_J =\{\rz{\hat{\xi}_j}\}_{j=1}^J$, step sizes $\{\alpha_i\}_{i\in\mathbb{N}}, \alpha_i>0$}
    \KwOut{$\theta^\star$, $\hat{w}_{\theta^\star}$}
    Bootstrap $n_b$ reference distributions $\{\hat{\mathbb{P}}_k\}_{k=1}^{n_b}$ by resampling $\hat{\mathcal{D}}_J$ with replacement\\
    Find $\varepsilon$ as the $1-\beta$ quantile of $\{d(\hat{\mathbb{P}}_k, \hat{\mathbb{P}}; \theta_0)\}_{k=1}^{n_b}$\\
    \For{$i \leftarrow 1$ \KwTo {\normalfont\texttt{maxiter}}}{
    Solve conic program with $A(\theta_i), b(\theta_i), c(\theta_i)$ for \rz{$z^\star(\theta_i)$}\\
    Differentiate through conic program for $J_\mathcal{S}(\theta_i)^\top c(\theta_i)$ \\
    Calculate $d(\hat{\mathbb{P}}_k, \hat{\mathbb{P}};\theta_i)$ for all $k\in \left[n_b\right]$ and obtain $J_e(\theta_i)\in\J_{e}(\theta_i)$\\
    Compute $J_\varphi(\theta_i) = J_\mathcal{S}(\theta_i)^\top c(\theta_i) + \mathcal{S}(\theta_i)^\top J_c(\theta_i) + 2\lambda_\mathrm{p} \max \{0,e(\theta_i)\} J_e (\theta_i)$\\
    Update $\theta_{i+1} = \theta_i - \alpha_i J_\varphi(\theta_i)$\\
    Update the conic program parameters $A(\theta_{i+1}), b(\theta_{i+1}), c(\theta_{i+1})$\\
    }
    \caption{Loss-aware Distributional Robust Optimization}
    \label{alg:learning_ambiguity}
\end{algorithm}

\subsection{Convergence}\label{subsection:path_diff}
The gradient descent procedure in Algorithm~\ref{alg:learning_ambiguity} is guaranteed to converge to a critical point under mild conditions.

\begin{theorem}
Under \cref{ass:path_diff_bilevel}, if the step sizes $\alpha_i\geq 0 $ are square summable but not summable, and $\sup_i \theta_i < +\infty$, then $\theta_\infty:=\lim_{i\to \infty} \theta_i\in \operatorname{crit}\varphi$, where $\operatorname{crit}\varphi = \{ \theta: 0\in \J_{\varphi}(\theta \}$. If $\alpha_i\equiv \bar{\alpha}$, then for every $\epsilon>0$ there exists an $\bar{\alpha}>0$ such that $\limsup_{i\to\infty} \operatorname{dist}(\theta_i,\operatorname{crit}\varphi)\leq\epsilon$.
\end{theorem}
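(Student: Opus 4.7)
The strategy is to reduce the claim to the general convergence results for subgradient dynamics on path-differentiable functions recalled in \cref{ch:preliminaries}, namely \cite[Theorem 3.2]{davis2020stochastic} for square-summable-but-not-summable step sizes and \cite[Theorem 2]{bolte2024inexact} for constant step sizes. To invoke them, two things must be verified: (i) $\varphi$ is locally Lipschitz and definable in an o-minimal structure, hence path-differentiable; (ii) the set-valued map $\J_\varphi$ defined in \cref{eq:chain_rule_1} is a bona fide conservative Jacobian of $\varphi$. Both are direct consequences of \cref{ass:path_diff_bilevel} combined with the calculus rules for conservative Jacobians.

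For (i), I would decompose $\varphi = \varphi_\text{o} + \varphi_\text{p}$ and treat each summand separately. By \cref{ass:path_diff_bilevel}, $c(\theta)$ and $x^\star(\theta)$ are locally Lipschitz and definable, so the bilinear combination $\varphi_\text{o}(\theta) = c(\theta)^\top x^\star(\theta)$ lies in the same class, since locally Lipschitz definable functions are closed under sums and products. For $\varphi_\text{p}$, each $d(\hat{\mathbb{P}}, \hat{\mathbb{P}}(\hat{\mathcal{D}}_J^k); \theta)$ is locally Lipschitz and definable by \cref{ass:path_diff_bilevel}, the sigmoid $\sigma$ is globally Lipschitz and analytic on $\R$, hence definable in the o-minimal structure $\R_{\exp}$, and $t \mapsto \max\{0,t\}^2$ is semialgebraic and $C^1$. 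Composition, summation and scaling preserve both properties, so $\varphi_\text{p}$, and therefore $\varphi$, is locally Lipschitz and definable. By \cite[Proposition 2]{bolte2021conservative}, $\varphi$ then admits a conservative Jacobian.

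For (ii), I would apply the sum and chain rules for conservative Jacobians from \cite{bolte2021conservative} to $\varphi_\text{o}(\theta) = c(\theta)^\top x^\star(\theta)$ and to $\varphi_\text{p}(\theta) = \lambda_\mathrm{p}\, g(e(\theta))$ with $g(t) = \max\{0,t\}^2$ (which is $C^1$ with derivative $2\max\{0,t\}$). The conservative Jacobian $\J_{x^\star}$ is extracted by implicit differentiation through the conic program \cref{eq:conic:problem:DRO} as described in Appendix~\ref{app:diff_through_conic_programs}, while $\J_e$ follows from differentiating the optimal transport value function as in Appendix~\ref{app:diff_ot}. Reassembling these via the chain rule produces exactly the expression in \cref{eq:chain_rule_1}.

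With (i) and (ii) in hand, the iterates generated by Algorithm~\ref{alg:learning_ambiguity} take the form $\theta_{i+1} = \theta_i - \alpha_i J_\varphi(\theta_i)$ with $J_\varphi(\theta_i) \in \J_\varphi(\theta_i)$, i.e., subgradient dynamics for a path-differentiable $\varphi$. Under boundedness $\sup_i\|\theta_i\|<\infty$ and square-summable-but-not-summable $\{\alpha_i\}$, the first conclusion is an immediate application of \cite[Theorem 3.2]{davis2020stochastic}. Under constant step size $\alpha_i \equiv \bar{\alpha}$, \cite[Theorem 2]{bolte2024inexact} delivers the asymptotic $\epsilon$-stationarity bound, with $\epsilon$ shrinking as $\bar{\alpha}$ decreases. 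The main technical obstacle is really (i), specifically the definability and local Lipschitzness of the conic-program solution map $x^\star(\theta)$; \cref{ass:path_diff_bilevel} packages this requirement, and sufficient structural conditions for it are spelled out in Appendix~\ref{app:diff_through_conic_programs} following \cite{bolte2021nonsmooth,bolte2024differentiating}.
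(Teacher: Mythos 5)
Your proposal is correct and follows essentially the same route as the paper: establish that $\varphi$ is locally Lipschitz and definable (hence path-differentiable with the conservative Jacobian given by the chain rule), then invoke \cite[Theorem 3.2]{davis2020stochastic} for the vanishing step size case and \cite[Theorem 2]{bolte2024inexact} for the constant step size case. The paper's own proof is terser---it only remarks that definability of $e(\theta)$ follows from that of $d$ since inversion (needed for the sigmoid) preserves definability---so your more explicit verification of the Lipschitz/definability closure properties and of the conservative-Jacobian calculus is a faithful, slightly more detailed rendering of the same argument.
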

\begin{proof}
Definability of $e(\theta)$ follows immediately from the definability of $d(\mathbb{P}_1,\mathbb{P}_2;\theta)$, since all functions involved are definable, and inversion preserves definability \cite[Remark 2]{kurdyka1998gradients}. The result then follows from \cite[Theorem 3.2]{davis2020stochastic} for the vanishing step size case, and from \cite[Theorem 2]{bolte2024inexact} for the constant step size case.
\end{proof}
As a by-product of our problem formulation, we can also embed an OT-DRO problem as a general-purpose differentiable layer. Our analysis is general and extends beyond OT-based ambiguity set to \textit{any} ambiguity set \rz{that can be represented as a conic set}. This class is broad and encompasses many \rz{relevant} uncertainty descriptions \cite{ben2001lectures} including support-, moment- and entropic-based ambiguity sets. The differentiable layer can be used in any machine learning framework (e.g., PyTorch, TensorFlow, JAX) to learn the ambiguity set parameters and can be integrated into a larger layered architecture.

\section{Numerical experiments}
\label{ch:numerics}
\subsection{Portfolio optimization}
\label{numerics:portfolioopt}
We consider a portfolio optimization problem where the goal is to find the optimal allocation of $k$ financial assets to maximize the profit from the investment. Mathematically,
$
    \ell(w,\xi) = -w^\top \xi,
$
where  $w \in \mathbb{R}^k$ is the vector of allocation weights, $\xi \in \mathbb{R}^k$ are the returns, and $\mathcal{W} = \left\{ w \in \mathbb{R}^k \; \vert\; \mathbf{1}^\top w =1,\; w \succeq 0\right\}$. 
\rz{We assume that the returns $\xi$ are distributed according to an unknown Gaussian probability distribution $\mathbb{P}$.}
\mf{The decision maker considers a nominal distribution $\hat{\mathbb{P}}\triangleq \mathcal{N}(\hat{\mu}, \hat{\Sigma})$ and robustifies against the mismatch via a parametric Wasserstein ambiguity set with $\kappa(\xi_1, \xi_2; \theta) = \|L^\top (\xi_1 - \xi_2)\|_2^2$ with $\theta = L \in \mathbb{L}_{++}^k$ being the tuning parameter. Formally, the transportation cost corresponds to the Mahalanobis distance with weight $L$.} 
To hedge against the tail-risk, we consider the Conditional Value at Risk (CVaR) of the loss $\ell$ \cite{rockafellar2000optimization}. The OT-DRO problem then reads
\begin{equation}
    \min_{w\in\mathcal{W}, L \in \mathbb{L}_{++}^k} \max_{\mathbb{Q} \in \mathcal{B}_\varepsilon(\hat{\mathbb{P}};\theta)} \text{CVaR}_\gamma^\mathbb{Q}\left( -w^\top\xi \right), \label{eq:opt_dro_portfolioopt}
\end{equation}
where
$$\text{CVaR}_\gamma^\mathbb{Q}(X) = \min_{\tau \in \mathbb{R}} \left\{ \tau + \frac{1}{\gamma} \mathbb{E}_{X \sim \mathbb{Q}}[\max(0, X - \tau)]\right\}$$
\mf{and $\mathcal{B}_\varepsilon(\hat{\mathbb{P}};\theta) = \{ \mathbb{Q} \in \mathcal{P}_g(\mathbb{R}^k) \; | \; d(\mathbb{Q}, \hat{\mathbb{P}}; \theta) \leq \varepsilon \}$ is a parametrized OT ambiguity set restricted to contain only Gaussians.\footnote{\mfnew{This restriction is solely done to simplify the implementation of the bootstrapping procedure, as it suffices to bootstrap the first two moments of the nominal distribution to fully characterize the boostrapped ones. Appendix~\ref{app:furtherresults:portfolioopt} provides a fully data-driven implementation that does not rely on such restriction.}}}
We defer the reformulation of \eqref{eq:opt_dro_portfolioopt} to Appendix~\ref{app:dro_cp_reformulation:portfolioopt:gaussianref} and the experimental details to Appendix~\ref{app:numerics_details:portfolioopt}. Further, we report results on portfolio optimization using a discrete reference distribution in Appendix~\ref{app:furtherresults:portfolioopt}.

We first exemplify how our procedure works on a specific problem instance with $k = 2, J=30, n_b = 20, \gamma = 0.05, \beta = 0.1$.
%
The results are shown in Figure~\ref{fig:single_example_truedist_L_decisions}.
\rz{As the iterations progress, the $L$ matrix changes, increasing the weight of the probability mass in the lower-left corner corresponding to adversary returns. This effectively decreases the worst-case $\mathrm{CVaR}_\gamma^{\mathbb{Q}^\star}$ as distributions assigning high probability mass in this direction are excluded from the ambiguity set.}
\rz{At the same time, this leads to an improved} out-of-sample performance $\text{CVaR}_\gamma^{\mathbb{P}}$, indicating a reduction in the conservatism of the solution (center).
Meanwhile, the true distribution is still contained in the ambiguity set with high probability via \eqref{eq:constraint:boot}, ensuring the required robustness properties \mf{(see later discussion on Figure~\ref{fig:effectiveness_bootstrapping_gaussian_cvar})}.
\mf{The gap between the blue and the red curves in Figure~\ref{fig:single_example_truedist_L_decisions} (center) reflects the ``price of robustness'' due to only knowing the distribution through samples, as $\mathcal{B}_\varepsilon(\hat{\mathbb{P}};\theta)$ contains also different, and possibly more adversarial, distributions.}
%
%

We validate our procedure on $50$ independent experiments with $k=3$, each using a different true Gaussian distribution $\mathbb{P}$. For each distribution, $10$ distinct datasets are sampled. To evaluate the effectiveness of our procedure, we monitor the relative improvement of the worst-case objective 
\begin{equation}
    f_0 = \mathrm{CVaR}_{\gamma}^{\xi\sim\mathbb{Q}^\star(L_0)}\left[ -(w^\star(L_0))^\top \xi \right] \text{ and } f^\star = \mathrm{CVaR}_{\gamma}^{\xi\sim\mathbb{Q}^\star(L^\star)}\left[ -(w^\star(L^\star))^\top \xi \right] \nonumber
\end{equation}
and of the out-of-sample performance
\begin{equation}
    \ell_0 = \mathrm{CVaR}_{\gamma}^{\xi\sim\mathbb{P}}\left[ -(w^\star(L_0))^\top \xi \right] \text{ and } \ell^\star = \mathrm{CVaR}_{\gamma}^{\xi\sim\mathbb{P}}\left[ -(w^\star(L^\star))^\top \xi \right]. \nonumber
\end{equation}
Figure~\ref{fig:single_example_truedist_L_decisions} (right) confirms that a reduction of conservatism is observed on average across all problem instances, with the relative improvement being larger for a smaller number of samples. 
\begin{figure}[htbp]
    \centering
    {\includegraphics[width=\textwidth]{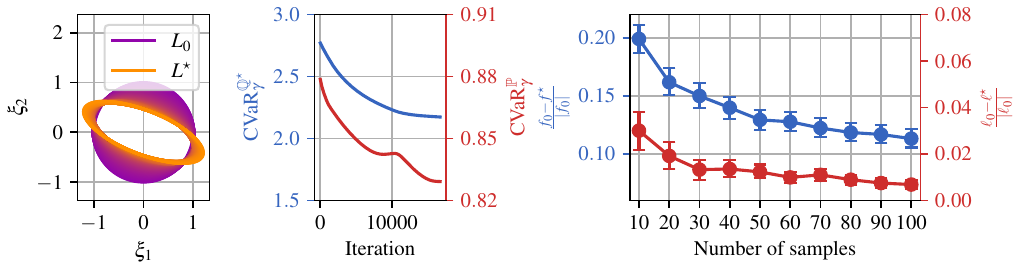}}
    \caption{Results of the bilevel optimization. Change in transportation cost parametrization (left), convergence plot over iterations (center), average improvement over multiple experiments w.r.t. number of samples (right)}
    \label{fig:single_example_truedist_L_decisions}
\end{figure}
%

\mf{We assess the coverage of the true distribution by tracking the parametrized distance $d(\mathbb{Q},\hat{\mathbb{P}}; \theta)$ across all experiments in Figure~\ref{fig:effectiveness_bootstrapping_gaussian_cvar}. The results confirm that our procedure reliably contains the true distribution within the ambiguity set with high probability, preserving the out-of-sample guarantees of OT-DRO. In contrast, omitting the coverage constraint in \eqref{eq:constraint:boot} may lead to excessive shrinkage of the ambiguity set, eventually excluding the true distribution.}

\begin{figure}[htbp]
    \centering
    \includegraphics[width=0.9\linewidth]{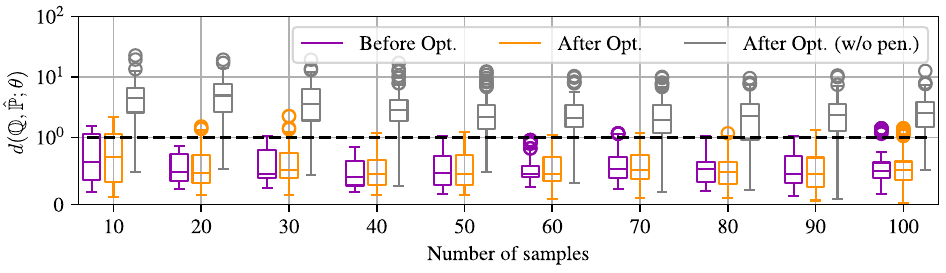}
    \caption{{Normalized distance $d(\mathbb{Q}, \hat{\mathbb{P}}; \theta)$ between reference and true distribution before (violet) and after (orange) the bilevel procedure is applied (i.e., for $L_0$ and $L^\star$, respectively). The grey boxplot corresponds to running our scheme without enforcing \eqref{eq:constraint:boot}, that is, by setting $\lambda_p = 0$.}}
    \label{fig:effectiveness_bootstrapping_gaussian_cvar}
\end{figure}

\subsection{Linear regression}
\label{numerics:linear:regression}
Next, we consider the distributionally robust linear regression task \cite{kuhn2024wassersteindistributionallyrobustoptimization,shafieezadeh-abadeh_regularization_2019,blanchet_robust_2019,chen2021distributionally}, where the goal is learning a linear regressor that performs well under uncertainty in the data distribution. Let $x\in\R^k$ and $y\in\R$ be the independent and dependent variables, respectively, and let $\xi = (x,y) \in \mathbb{R}^{k+1}$. We parametrize the transportation cost as $\kappa(\xi_1,\xi_2;\theta) = \norm{L^\top (\xi_1-\xi_2)}_2$ with $\theta=L\in\mathbb{L}_{++}^k$, and consider the loss function
$
    \ell_1(w,\xi) = \abs{(-w,1)^\top \xi} = \abs{\bar{w}^\top \xi},
$
where the decision $w\in\mathbb{R}^k$ represents the weights of the linear model. The distributionally robust linear regression problem reads
\begin{equation}\label{eq:dro:linear:regression}
    \min_{w\in\mathbb{R}^k} \max_{\mathbb{Q} \in \mathcal{B}_\varepsilon(\hat{\mathbb{P}};\theta)} \mathbb{E}_{\xi\sim\mathbb{Q}}\left(  \ell_1(w,\xi)\right), 
\end{equation}%
where $\hat{\mathbb{P}} = \frac{1}{J} \sum_{j=1}^J \delta_{\hat{\xi}_j}$ with $\hat{\xi}_j = \{(\hat{x}_j, \hat{y}_j)\}$.  
We defer the reformulation of \eqref{eq:dro:linear:regression} to Appendix~\ref{app:dro_cp_reformulation:linreg:1type} and the experimental details to Appendix~\ref{app:numerics_details:linreg}. Further, we report the analogous case with a squared loss function $\ell_2(w,\xi) = \left((-w,1)^\top \xi\right)^2$ in Appendix~\ref{app:furtherresults:linreg:squared}.

As before, we begin by showing the results of a single problem instance, where $\xi=(x,y)\in\R^2$ is generated with the following linear model corrupted by white noise
\begin{equation}
    y = w x + \mathrm{e}, \text{ where } \mathrm{e} \sim \mathcal{N}(0,\sigma) \text{ and } x\sim \mathcal{U}(-10.0,10.0). \label{eq:linreg_datagen_model}
\end{equation}
The weight $w$ is deterministic and set to $1$, and the standard deviation of the noise is $\sigma=10$.

Figure~\ref{fig:linreg_single_example} shows the true distribution of $\xi$ and the $J=20$ samples drawn from it (left), the unit transportation cost ellipses across iterations (center), and the effect of our procedure on the linear model (right). 
\begin{figure}[htbp]
    \centering
    \includegraphics[width=\textwidth]{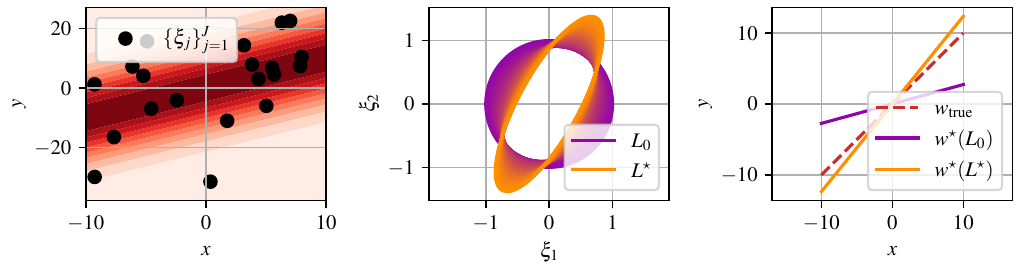}
    \caption{Underlying true distribution $\mathbb{P}$ (red contour) and samples (black), unit-cost ellipses defined by the matrices $L_i$ in the upper-level optimization, and optimal decisions before ($w^\star(L_0)$) and after ($w^\star(L^\star)$) optimization. We set $n_b=20$.}
    \label{fig:linreg_single_example}
\end{figure}

Figure~\ref{fig:linreg_errors} (left) shows the worst-case expected absolute error $\mathrm{e}_{\mathrm{wc}}(L) = \mathbb{E}_{\xi\sim\mathbb{Q}^\star(L)}\left( \ell_1(w^\star(L),\xi) \right)$, corresponding to the upper level objective, and the expected absolute error on the true data generating process $\mathrm{e}_{\mathrm{oos}}(L) = \mathbb{E}_{\xi\sim\mathbb{P}}\left( \ell_1(w^\star(L),\xi) \right)$, approximated with $10^7$ independent samples. Both metrics decrease across iterations, indicating a reduction in conservatism of the decision.
We corroborate our results on $10$ independent experiments, each using a different true distribution $\mathbb{P}$ from which $10$ datasets are generated. To evaluate the proposed method, we consider the same relative improvement metrics used before, adapted to the linear regression loss function. Results, shown in Figure~\ref{fig:linreg_errors} (right), suggest once again that the proposed method reduces, on average, the conservatism of the decision.
  
\begin{figure}[htbp]
    \centering
    \includegraphics[width=\linewidth]{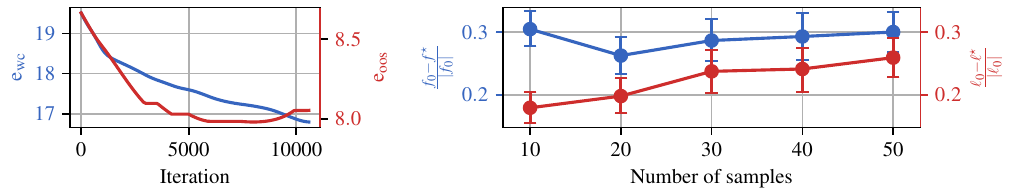}
    \caption{Expected absolute error on the worst-case distribution $\mathrm{e}_\mathrm{wc}$ and expected absolute error on the true data generating process $\mathrm{e}_\mathrm{oos}$ approximated with $10$ million samples (left). Average improvement over multiple experiments w.r.t. the number of samples (right).}
    \label{fig:linreg_errors}
\end{figure}

\section{Concluding Remarks and Limitations}
\label{ch:conclusions}

\mfnew{

\textbf{Limitations.} A key limitation of our work lies in the absence of theoretical guarantees for the coverage constraint (Eq.~\ref{constraint:upper}), which we currently enforce heuristically through a bootstrapping procedure. Although our experiments highlight the practical effectiveness and robustness of the proposed algorithm, providing a rigorous lower bound on the probability of covering the true distribution under resampling-based methods remains an open challenge. Addressing this requires a deeper understanding of the interplay between $J$ and $n_b$, as preliminarily discussed in \cite{summers2021distributionally}, and represents an important direction for future research.

\textbf{Outlook.} In view of the growing popularity of the OT-DRO model, the results in this work carry important practical implications: we expect our end-to-end pipeline to result in less conservative, yet reliable decisions across several domains, including finance and machine learning, as demonstrated in Section \ref{ch:numerics}.
} 

\bibliography{references}


\newpage
\begin{appendices}

\listofappendices

\section{Finite-dimensional DRO reformulation as Conic Program}
\label{app:dro_cp_reformulation}
\subsection{General reformulation}
\label{app:dro_cp_reformulation:primal}
We provide a finite-dimensional convex reformulation for a general instance of the OT-DRO problem \eqref{eq:decision_making_under_uncertainty} (and of its dual) under mild regularity conditions. 
\subsubsection{Primal problem}
Consider the primal OT-DRO problem 
\begin{equation}
    \inf_{w\in\mathcal{W}} \; \sup_{\mathbb{Q} \in \mathcal{B}_\varepsilon(\hat{\mathbb{P}} )}\mathop{\mathbb{E}}_{\xi \sim \mathbb{Q}} \left[\ell(w,\xi) \right].
    \label{eq:primal_dro}
\end{equation}
with empirical nominal distribution $\hat{\mathbb{P}} = \frac{1}{J}\sum_{j=1}^J \delta_{\hat{\xi}_j}$, and OT ambiguity set defined as
\begin{equation}
    \mathcal{B}_\varepsilon(\hat{\mathbb{P}}) = \left\{\mathbb{Q} \in \mathcal{P}(\Xi) \; \vert\; d(\mathbb{Q}, \hat{\mathbb{P}}) \leq \varepsilon\right\},
    \label{eq:ambiguity_set:appendix}
\end{equation}
where $d(\mathbb{Q}, \hat{\mathbb{P}}): \mathcal{P}(\Xi) \times \mathcal{P}(\Xi) \rightarrow [0,+\infty)$ is defined as
\begin{equation*}
    d(\mathbb{Q}, \hat{\mathbb{P}}) = \inf_{\pi \in \Pi(\mathbb{Q}, \hat{\mathbb{P}})} \int_{\Xi \times \Xi} \kappa(\xi, \hat{\xi}) \; \mathrm{d}\pi(\xi, \hat{\xi}),
\end{equation*}
for a certain transportation cost $\kappa(\xi_1, \xi_2):\Xi \times \Xi \rightarrow [0, +\infty)$. To provide a convex finite-dimensional reformulation of \eqref{eq:primal_dro}, some regularity conditions need to be satisfied \cite{shafieezadehabadeh2023newperspectivesregularizationcomputation}.

\begin{assumption}[Transportation cost] The transportation cost has to satisfy the properties:
    \begin{enumerate}
        \item [i] $\kappa(\xi_1, \xi_2) \geq 0$ always, and $\kappa(\xi_1, \xi_2) = 0 \iff \xi_1 = \xi_2$.
    \item [ii.] $\kappa(\xi_1,\xi_2)$ is lower semicontinuous in $(\xi_1,\xi_2)$ and convex in its first argument.
    \item [iii.] There exists a reference point $\hat{\xi} \in \mathbb{R}^d$ such that $\mathbb{E}_{\xi \sim \hat{\mathbb{P}}}[\kappa(\xi, \hat{\xi})] < +\infty$.
    \item [iv.] There exists a metric $d(\xi_1, \xi_2)$ on $\mathbb{R}^d$ with compact sublevel sets such that $\kappa(\xi_1, \xi_2) \geq d^p(\xi_1, \xi_2)$ for some $p \in \mathbb{N}$.
\end{enumerate}
    \label{ass:transportation_cost}
\end{assumption}
Specifically, Assumptions \ref{ass:transportation_cost}(iii-iv) allows to prove that the ambiguity set $ \mathcal{B}_\varepsilon(\hat{\mathbb{P}})$ is weakly compact. Assumptions \ref{ass:transportation_cost}(i-ii) automatically implies that $\kappa(\xi_1, \xi_2)$ is proper, convex and lower semicontinuous in $\xi_1$ for any fixed $\xi_2$ and that it is proper, convex and lower semicontinuous in $\xi_2$ for any fixed $\xi_1$.

Next, we assume the following properties for the loss function.
\begin{assumption}[Loss function] The loss function is representable as point-wise maximum of finitely many functions
    \begin{equation*}
        \ell(w, \xi) = \max_{i\in I}\; \ell_i(w, \xi),
    \end{equation*}
    where each $\ell_i$ is proper, convex, and lower semicontinuous in $w,$ and $-\ell_i$ are proper, convex, and lower semicontinuous in $\xi$. 
    \label{ass:loss_function}
\end{assumption}
 Moreover, we enforce the following for the support set.
\begin{assumption}[Support of the uncertainty] The support set is representable as
    \begin{equation*}
        \Xi = \left\{\xi \in \mathbb{R}^d \; \vert\; f_r(\xi) \leq 0 \;\forall r \in [R] \right\},
    \end{equation*}
    where each $f_r$ is proper, convex, and lower semicontinuous.
    \label{ass:support_uncertainty}
\end{assumption}

The finite-dimensional convex reformulation of \eqref{eq:primal_dro} relies on the following technical condition.
\begin{assumption}[Slater conditions] It holds:
    \begin{enumerate}
    \item [i.] For every $j \in [J], \hat{\xi}_j \in \mathrm{relint}(\mathrm{dom}(\kappa(\cdot, \hat{\xi}_j)))$ is a Slater point for the support set $\Xi$.
    \item [ii.] The feasible set $\mathcal{W}$ admits a Slater point.
    \end{enumerate}
    \label{ass:slaters_cond_support}
\end{assumption}

Let Assumptions \ref{ass:transportation_cost}, \ref{ass:loss_function}, \ref{ass:support_uncertainty}, and \ref{ass:slaters_cond_support} hold. Further, let $\varepsilon > 0$ and assume $\mathcal{W}$ is compact and convex. Then, the primal OT-DRO problem \eqref{eq:primal_dro} has the same infimum of the following finite-dimensional convex optimization problem \cite[Proposition~2.13]{shafieezadehabadeh2023newperspectivesregularizationcomputation}: 
\begin{equation}
    \begin{aligned}
        \inf_{} \quad & \lambda \varepsilon + \frac{1}{J}\sum_{j \in \left[J \right]} s_j \\
            \textrm{subject to} \quad & w \in \mathcal{W}, \lambda \in \mathbb{R}_+ , \tau_{ijr} \in \mathbb{R}_+, s_j \in \mathbb{R}, a_{ij}^\ell, a_{ij}^c, a_{ijr}^f \in \mathbb{R}^d \quad \forall i \in \left[ I \right],\;j \in \left[ J \right],\;r \in \left[ R \right] \\
            & (-\ell_i)^{*2}(w, a_{ij}^l) \; + \; \lambda \kappa^{*1}\left(\frac{a_{ij}^c}{\lambda}, \hat{\xi}_j\right) \; + \; \sum_{r\in\left[ R \right]} \tau_{ijr} f_r^*\left(\frac{a_{ijr}^f}{\tau_{ijr}}\right) \; \leq \; s_j \quad \forall i \in \left[ I \right],\;j \in \left[ J \right]\\
            & a_{ij}^\ell + a_{ij}^c + \sum_{r \in \left[R\right]} a_{ijr}^f = 0 \quad \forall i \in \left[ I \right],\;j \in \left[ J \right].
    \end{aligned}
    \label{eq:cp_reform_primal_dro}
\end{equation}    
Here, for any function of two arguments $f(\cdot,\cdot)$, we use $f^{*1}$ and $f^{*2}$ to denote the convex conjugate of the function with respect to its first and second argument while keeping the other argument fixed. A superscript $f^*$ on a function in a single argument denotes the convex conjugate.

\subsubsection{Dual problem}
\label{app:dro_cp_reformulation:dual}
Similarly, the dual OT-DRO problem 
\begin{equation}
    \sup_{\mathbb{Q} \in \mathcal{B}_\varepsilon(\hat{\mathbb{P}})} \inf_{w\in\mathcal{W}}  \; \mathop{\mathbb{E}}_{\xi \sim \mathbb{Q}} \left[\ell(w,\xi) \right]
    \label{eq:dual_dro}
\end{equation}
admits a tractable reformulation under certain regularity conditions. Specifically, in addition to Assumptions \ref{ass:transportation_cost}, \ref{ass:loss_function}, \ref{ass:support_uncertainty}, and \ref{ass:slaters_cond_support}, we require the following.
\begin{assumption}[Dual regularity conditions] One of the following three conditions has to be satisfied: (i) $\mathbb{E}_{\xi \sim \mathbb{Q}}[\ell(w,\xi)]$ is inf-compact in $w \in \mathcal{W}$ for some $\mathbb{Q} \in \mathcal{B}_\varepsilon(\hat{\mathbb{P}})$, (ii) $\Xi$ is compact, or (iii) $\kappa(\cdot, \hat{\xi})$ grows superlinearly in its first argument.
    \label{ass:compactness_superlinearity}
\end{assumption}

\begin{assumption}[Feasible decision set]\label{ass:feasibile:set}
The feasible set is representable as
\begin{equation*}
\mathcal{W} = \{ w \in \mathbb{R}^k \: | \: g_l(w) \leq 0 \: \forall l \in [L]\},
\end{equation*}
where each $g_l$ is proper, convex, and lower semicontinuous.
\end{assumption}

Let all the above assumptions hold and let $\varepsilon > 0$. Then, the dual DRO problem \eqref{eq:primal_dro} has the same supremum of following finite-dimensional convex optimization problem \cite[Proposition~2.15]{shafieezadehabadeh2023newperspectivesregularizationcomputation}: 
\begin{equation}
    \begin{aligned}
        \max_{} \quad & - \sum_{i \in \left[I \right]}\sum_{j \in \left[J \right]} q_{ij} \ell_i^{*1}\left(\alpha_{ij}/q_{ij},\; \hat{\xi}_{j} + b_{ij}/q_{ij}\right) - \sum_{l \in \left[L \right]} \nu_l g_l^*(\beta_l / \nu_l) \\
        \textrm{subject to}& \quad q_{ij},\; \nu_l \in \mathbb{R}_+, b_{ij}, \alpha_{ij}, \beta_l \in \mathbb{R}^k \quad \forall i \in [I],j \in [J], l \in [L] \\
        & \quad f_r\left( \hat{\xi}_j + b_{ij} / q_{ij} \right) \leq 0 \quad \forall i \in [I], j \in [J], r \in [R] \\
        & \quad \sum_{i\in\left[I\right]}q_{ij} = p_j \quad \forall j \in [J]\\
        & \quad \sum_{i \in \left[ I \right]}\sum_{j \in \left[ J \right]}\alpha_{ij} + \sum_{l\in\left[L\right]}\beta_l = 0 \\
        & \quad \sum_{i \in \left[ I \right]} \sum_{j \in \left[ J \right]} q_{ij}\kappa(\hat{\xi}_j + b_{ij}/q_{ij}, \hat{\xi}_j) \leq \varepsilon.
    \end{aligned}
    \label{eq:cp_reform_dro_dual}
\end{equation}

The optimal variables of this problem $\left\{q^\star_{ij},\; b^\star_{ij},\; \alpha^\star_{ij} \right\}_{i,j}$ and $\left\{\nu^\star_{l},\; \beta^\star_{l} \right\}_{l}$ can then be used to construct $\mathbb{Q}^\star$ as follows. Consider $\mathcal{I}_j^+=\{i \in [I]\;\vert\;q_{ij}^\star > 0\}$, $\mathcal{I}_j^0=\{i \in [I] \;\vert\;q_{ij}^\star = 0,\; b_{ij}^\star = 0\}$, and $\mathcal{I}_j^\infty=\{i\in [I]\;\vert\;q_{ij}^\star = 0,\; b_{ij}^\star \neq 0\}$. Then
\begin{equation*}
    \mathbb{Q}^\star = \sum_{j\in [J]} \sum_{i \in \mathcal{I}_j^+} q_{ij}^\star \delta_{\hat{\xi}_j + b_{ij}^\star/q^\star_{ij}} \text{ if } \mathcal{I}_j^\infty = \emptyset \; \forall j \in [J]
\end{equation*}

If there is a $j$ for which $\mathcal{I}_j^\infty$ is not empty, i.e. there exist $i,j$ such that $q_{ij}^\star=0$ but $b_{ij}^\star \neq 0$, then only an asymptotic sequence of probability distributions can be generated that converges to the optimal solution of the dual DRO problem \eqref{eq:dual_dro}. For $n \rightarrow \infty$, this construction reads \cite{shafieezadehabadeh2023newperspectivesregularizationcomputation}:
\begin{equation*}
    \mathbb{Q}^\star \mathop{\leftarrow}_{n\rightarrow \infty} \mathbb{Q}(n) = \sum_{j\in [J]} \sum_{i \in \mathcal{I}_j^+ \cup \; \mathcal{I}_j^\infty} \begin{cases}
        q_{ij}^\star (1-\abs{\mathcal{I}_j^\infty}/n) \; \delta_{\hat{\xi}_j + b_{ij}^\star/q^\star_{ij}} &\quad \text{ if } i \in \mathcal{I}_j^+\\
        \frac{p_j}{n} \; \delta_{\hat{\xi}_j + n\,b_{ij}^\star/p_{j}} &\quad \text{ if } i \in \mathcal{I}_j^\infty
    \end{cases}  
\end{equation*}

\subsection{Reformulation of the Portfolio Optimization problem}
\label{app:dro_cp_reformulation:portfolioopt}
This section provides convex reformulations for the distributionally robust portfolio optimization task with parametrized transportation costs under different problem settings.

\subsubsection{Discrete Reference Distribution}
\label{ch:portfolioopt_primal_empref_reformulation}
In case of discrete nominal distributions, we can directly invoke the results from Appendix~\ref{app:dro_cp_reformulation:primal}. The conjugate function of the objective in the case of a bilinear loss function, as in the portfolio optimization example, i.e.,
\begin{equation*}
    -\ell(x,y) = x^\top y    
\end{equation*}
evaluates as
\begin{equation}
    (-\ell)^{*2}(x, y) = \sup_{\zeta \in \mathbb{R}^k} \left\{ \zeta^\top (y - x) \right\} = \begin{cases}
        0 &\text{if } y = x \\
        \infty &\text{otherwise}
    \end{cases}.\label{eq:conj_loss_po}
\end{equation}

\paragraph{type-1 Mahalanobis distance.}
\label{app:dro_cp_reformulation:portfolioopt:1type}We consider a parametrized transportation cost of the form $\kappa(\xi_1, \xi_2; \theta) = \|L^\top(\xi_1 - \xi_2)\|_2$ with $\theta = L \in \mathbb{L}_{++}^k$, and let $\Xi = \mathbb{R}^d$ for $d = k$.       
Then, the conjugate function of 
\begin{equation*}
    \kappa(y,a; \theta) = \norm{L^\top (y-a)}_2,  
\end{equation*}
is given by
\begin{align}
    \kappa^{*1}(y,a) &= \sup_{x\in\mathbb{R}^d}\left\{ x^\top y - c(x,a) \right\} \\
    &= \sup_{x\in\mathbb{R}^d}\left\{ x^\top y - \norm{L^\top \left(x - a\right)}_2 \right\} \\
    &= a^\top y + \sup_{z\in\mathbb{R}^d} \left\{ z^\top y - \norm{L^\top z}_2 \right\} \\
    &= a^\top y + \sup_{w\in\mathbb{R}^d} \left\{ y^\top L^{-\top}w - \norm{w}_2 \right\} \label{eq:convconjtransportationcost_4}\\
    &= \begin{cases}
        a^\top y &\text{if } \norm{L^{-1}y}_2 \leq 1 \\
        \infty & \text{otherwise}
    \end{cases}\label{eq:convconjtransportationcost_5}.
\end{align}
where the first and second equality follow from the definition of convex conjugate and from the parametrized transportation cost, respectively; the third one is obtained by introducing the change of variables $z=x-a$; and the fourth one by letting $w=L^\top z$. As for the last equality, we proceed as follows. Let $w = \alpha \cdot \left( L^{-1}y \right) + \beta \cdot \left(u\right)$, where the vector $u$ is orthogonal to $L^{-1}y$ and $u$ has length one, the objective function inside the supremum of \eqref{eq:convconjtransportationcost_4} follows as
\begin{equation*}
    y^\top L^{-\top}w - \norm{w}_2  = \alpha y^\top L^{-\top} L^{-1} y + \beta \left(u\right)^\top L^{-1}y   - \norm{\alpha L^{-1}y + \beta u}_2 = f_\mathrm{sup}(\alpha, \beta).
\end{equation*}
However, as $u$ is orthogonal to $L^{-1}y$ and thus $\left(u\right)^\top L^{-1}y = 0$, we can simply choose $\beta^\star = 0$, i.e., $w^\star$ has to be aligned with $L^{-1}y$. At this point, we turn our attention to $\alpha$:        
\begin{align*}
    f_\mathrm{sup}(\alpha) &= \alpha \norm{P^{-\top}y}_2^2 - \abs{\alpha} \norm{P^{-\top}y}_2 \\
    &= \norm{P^{-\top}y}_2 \left( \alpha \norm{P^{-\top}y}_2 - \abs{\alpha}\right)
\end{align*}
We distinguish two cases:
\begin{itemize}
    \item $\alpha \geq 0$: $f_\mathrm{sup}(\alpha) = \alpha \norm{P^{-\top}y}_2 \left( \norm{P^{-\top}y}_2 - 1\right)$ \\
    The objective function $f_\mathrm{sup}(\alpha)$ in this region is unbounded above if $\norm{P^{-\top}y}_2 - 1 > 0$.\\
    If $\norm{P^{-\top}y}_2 - 1 > 0$, then the supremum is zero (as then $f_\mathrm{sup}(\alpha) \leq 0 \; \forall \alpha \geq 0$.
    \item $\alpha < 0$: $f_\mathrm{sup}(\alpha) = \alpha \norm{P^{-\top}y}_2 \left( \norm{P^{-\top}y}_2 + 1\right)$ \\
    The objective function is a linear function in $\alpha$ with positive slope. Its value is always negative (as $\alpha < 0$), which means that the optimal value of $\alpha$ is $\alpha^\star = 0$, and thus $f_\mathrm{sup}^\star = 0$).
\end{itemize}
Combining both results directly leads to \eqref{eq:convconjtransportationcost_5}. The resulting reformulation for the distributionally robust portfolio optimization problem in this setting reads
\begin{equation*}
    \begin{aligned}
        \inf_{} \quad & \lambda \varepsilon + \frac{1}{J}\sum_{j \in \left[J \right]} s_j \\
        \textrm{subject to} \quad & w \in \mathcal{W}, \lambda \in \mathbb{R}_+, s_j \in \mathbb{R} \quad \forall j \in \left[ J \right]\\
        & -w^\top \hat{\xi}_j \; \leq \; s_j \quad \forall j \in \left[ J \right]\\
        & \norm{L^{-1} w}_2 \leq \lambda.
    \end{aligned}
\end{equation*}
Next, we introduce the auxiliary variable $u$ such that $Lu = w \implies u = L^{-1}w$. Further, we introduce the variables $z_j = s_j + w^\top \hat{\xi}_j \geq 0$ and recognize that $z_j = 0 \; \forall j$ at optimality. This leads to
\begin{equation}\label{eq:DRO:port:step}
    \begin{aligned}
        \min_{\lambda, w, u} \quad & \lambda \varepsilon - w^\top \frac{1}{J}\sum_{j \in \left[J \right]}\hat{\xi}_j \\
        \textrm{subject to} \quad & w \in \mathbb{R}_+^k, \lambda \in \mathbb{R}_+ \\
        & \mathbf{1}^\top w = 1\\
        & Lu = w \\
        & \norm{u}_2 \leq \lambda.
    \end{aligned}
\end{equation}
In turn, \eqref{eq:DRO:port:step} can be cast as conic program in standard form \eqref{standard:conic} by using the following definitions:
\begin{align}
    s &= (s_1, s_2, s_3, s_4) \in \mathcal{K} := \left\{0\right\} \times \left\{0\right\}^{k} \times \mathbb{R}_+^{k} \times \mathrm{SOC}(k+1) \\
    x &= (w, \lambda, u) \in \mathbb{R}^{k+1+k}\\
    c &= (-\frac{1}{J}\sum_{j \in \left[J \right]}\hat{\xi}_i,\, \varepsilon,\, 0,\, ...,\, 0)\\
    b &= (1,\, 0,\, ...,\, 0) \\
    A &= \begin{bmatrix}
        \mathbf{1}_k^\top & 0 & 0 \\
        -I_k & 0 & L \\
        -I_k & 0 & 0 \\
        0 & -1 & 0\\
        0 & 0 & -I_k
    \end{bmatrix}. \label{eq:empirical_primal_dro_reform_A_matrix}
\end{align}

\paragraph{2-type Mahalanobis distance.}
\label{app:dro_cp_reformulation:portfolioopt:2type}We consider a different parametrization for the transportation cost given by $\kappa(x,y; \theta) = \norm{L^\top (x-y)}_2^2$, which admits the conjugate 
\begin{align*}
    \kappa^{*1}(y,a) &= \sup_{x\in\mathbb{R}^d}\left\{ x^\top y - \norm{L^\top \left(x - a\right)}_2^2 \right\} \\
    &= a^\top y + \frac{1}{4}y^\top (L L^\top)^{-1} y.
\end{align*}
Again, we invoke the results of Appendix~\ref{app:dro_cp_reformulation:primal}, letting $\Xi = \mathbb{R}^d$ and using the conjugate of the bilinear loss function from \eqref{eq:conj_loss_po}. We obtain the following convex problem:
\begin{equation*}
    \begin{aligned}
        \min \quad & \lambda \varepsilon^2 + \frac{1}{J}\sum_{j \in \left[J \right]} s_j \\
        \textrm{subject to} \quad & w \in \mathcal{W}, \lambda \in \mathbb{R}_+, s_j \in \mathbb{R} \quad \forall j \in \left[ J \right]\\
        & \frac{1}{4\lambda} w^\top L^{-\top}L^{-1} w - \hat{\xi}_j^\top w \leq s_j \quad \forall j \in \left[ J \right].
    \end{aligned}
\end{equation*}
Towards reformulating it as a conic program in standard form \eqref{standard:conic}, we introduce the variables $t_j = \hat{\xi}_j^\top w + s_j$ as well as the vector $z$ such that $w = Lz \implies z = L^{-1}w$, yielding
\begin{equation*}
    \begin{aligned}
        \min \quad & \lambda \varepsilon^2 + \frac{1}{J}\sum_{j \in \left[J \right]} s_j \\
        \textrm{subject to} \quad & w \in \mathcal{W}, \lambda \in \mathbb{R}_+, t_j \in \mathbb{R}, z \in \mathbb{R}^k, s_j \in \mathbb{R} \quad \forall j \in \left[ J \right]\\
         & t_j = s_j + \hat{\xi}_j^\top w \quad \forall j \in \left[ J \right]\\
        & w = Lz \\
        & z^\top z \leq 4\lambda t_j \quad \forall j \in \left[ J \right].
    \end{aligned}
\end{equation*}
Using the second-order cone representation \cite{ben2001lectures}:
\begin{equation*}
    z^\top z \leq t\cdot a, t\geq 0, a \geq 0 \quad \iff \quad \norm{(2z, t-a)}_2 \leq t + a, t\geq 0, a\geq 0,
\end{equation*}
we finally get
\begin{equation}\label{eq:dro:2:step}
    \begin{aligned}
        \min \quad & \lambda \varepsilon^2 + \frac{1}{J}\sum_{j \in \left[J\right]} (t_j - \hat{\xi}_j^\top w) \\
        \textrm{subject to} \quad & w \in \mathcal{W}, \lambda \in \mathbb{R}_+, u_j \in \mathbb{R}^{k+1}, t_j \in \mathbb{R}, v_j \in \mathbb{R} \quad \forall j\in \left[ J \right]\\
        & v_j = 4\lambda + t_j  \quad \forall j \in \left[ J \right]\\
        & \norm{u_j}_2 \leq v_j \quad \forall j \in \left[ J \right]\\
        & w = Lz \\
        & u_j = \begin{bmatrix}
            2 z \\
            4 \lambda - t_j
        \end{bmatrix} \quad \forall j \in \left[ J \right].
    \end{aligned}
\end{equation}
In turn, \eqref{eq:dro:2:step} can be readily written in standard form \eqref{standard:conic} by using the following definitions:
\begin{align*}
    s &= (s_1, \,s_2 ,\, v_1,\, u_1,\, ...,\, v_J,\, u_J) \in \mathcal{K} := \left\{0\right\}^{1+k} \times \mathbb{R}_+^{k+1} \times \{\mathrm{SOC}(k+2)\}^J \\
    x &= (t_1,\, ...,\, t_J, w, \lambda, z) \in \mathbb{R}^{J+k+1+k}\\
    c &= \textstyle \left(\frac{1}{J},\, ...\,,\frac{1}{J},\, -\frac{1}{J}\sum_{j \in \left[J \right]}\hat{\xi}_j,\, \varepsilon^2,\, 0,\, ...,\, 0 \right)\\
    b &= (1,\, 0,\, ...,\, 0) \\
    A &= \begin{bmatrix}
         A_{11} & A_{12} & A_{13} & A_{14} \\
         A_{21} & A_{22} & A_{23} & A_{24} \\
         A_{31} & A_{32} & A_{33} & A_{34} \\
         A_{41} & A_{42} & A_{43} & A_{44} \\
         \hline
         A_{51} & A_{52} & A_{53} & A_{54} \\
         \vdots & \vdots & \vdots & \vdots \\
         A_{(5+J-1)1} & A_{(5+J-1)2} & A_{(5+J-1)3} & A_{(5+J-1)4} \\
    \end{bmatrix} \in \mathbb{R}^{(1+k+k+1+J[k+2]) \times (J+k+1+k)}
\end{align*}
where
\begin{align*}
    A_{12} = \mathbf{1}^\top, 
    \: A_{22} = -I_{k\times k}, \: A_{24} = L, \: A_{32} = -I_{k\times k}, \: A_{43} = -1,
\end{align*}
and the part below the dashed line corresponds to the $J$ second-order cone constraints, where
\begin{align*}
    A_{(5+j-1)1}[:,j] &= (-1,\, 0, \, ...,\, 0,\, +1)  \in \mathbb{R}^{(2+k) \times 1} \quad \forall j\in \{1,...,J\} \\
    A_{(5+j-1)3} &= (-4,\, 0, \, ...,\, 0,\, -4)  \in \mathbb{R}^{(2+k) \times 1} \quad \forall j\in \{1,...,J\} \\
    A_{(5+j-1)4} &= \begin{bmatrix}
        0 & \cdots & 0 \\
        -2 & & 0 \\
         & \ddots & \\
         0 & & -2 \\
         0 & \cdots & 0
    \end{bmatrix} \in \mathbb{R}^{(2+k) \times k} \quad \forall j\in \{1,...,J\}
\end{align*}
and $X[:,j]$ denotes the $j$-th column of $X$. The rest of $A$ are zeros.

\subsubsection{Gaussian Reference Distribution}
\label{app:dro_cp_reformulation:portfolioopt:gaussianref}
Next, we turn attention to the setting where the nominal distribution is Gaussian rather than empirical, i.e., $\hat{\mathbb{P}} = \mathcal{N}(\hat{\mu}, \hat{\Sigma})$ for $\hat{\mu}, \hat{\Sigma}$ known. In this setting, by extending \cite[Theorem~5]{nguyen2023meancovariancerobustriskmeasurement} to the parametrized transportation cost $\kappa(\xi_1, \xi_2; \theta) = \|L^\top(\xi_1 - \xi_2)\|_2^2$ with $\theta = \{L \in \mathbb{L}_{++}^{d=k}\}$, we obtain
{
\begin{equation}\label{eq:DRO:mom}
    \begin{aligned}
    & \min_{w\in\mathcal{W}} \max_{\mathbb{Q} \in \mathcal{B}_\varepsilon (\hat{\mathbb{P}})} \mathcal{R}_{\xi\sim\mathbb{Q}}\left( -w^\top\xi \right)\\
     = & \min_{w\in\mathcal{W}} -\hat{\mu}^\top w + \alpha\sqrt{w^\top \hat{\Sigma} w} + \varepsilon \sqrt{1 + \alpha^2} \sqrt{w^\top (LL^\top)^{-1}w},
    \end{aligned}
\end{equation}}%
where $\alpha$ is a risk coefficient that only depends on $\gamma$, thus it is a constant. To bring \eqref{eq:DRO:mom} into the conic standard form \eqref{conic_primal_1}, we introduce auxiliary scalar variables $u$, $v$, and vector variables $z$ and $q$. We obtain:
\begin{equation*}
    \begin{aligned}
        \min_{w, u, v, q, z} \quad & -\hat{\mu}^\top w + \alpha u + \varepsilon \sqrt{1 + \alpha^2} v \\
        \textrm{subject to} \quad & \norm{z}_2 \leq u \\
        & \norm{q}_2 \leq v\\
        & w = Lq \\
        & \sqrt{\hat{\Sigma}} w = z \\
        & \mathbf{1}^\top w = 1\\
        & w \succeq 0,
    \end{aligned}
\end{equation*}
which can now be readily cast into the standard form \eqref{standard:conic} using the conic slack vector 
\begin{align*}
    s=\left(s_1, \bar{w}, \left(\bar{u}, z\right), \left(\bar{v}, \bar{q}\right) \right) \in \mathcal{K} := \{0\}^{k+1} \times \mathbb{R}^k_+ \times \mathrm{SOC}(k+1) \times\mathrm{SOC}(k+1)
\end{align*}
and the primal variable vector $x=(w, u, v, q) \in \mathbb{R}^{k + 1 + 1 + k}$. The corresponding problem data matrix and vectors are given by
\begin{align}
    {c} &= \left(-\hat{\mu}, \, \alpha, \, \varepsilon \sqrt{1+\alpha^2}, \, 0\, ...\, 0 \right) \\
    {b} &= ( 1, \, 0\, ...\, 0  ) \\
    {A} &= \begin{bmatrix}
        \mathbf{1}^\top & 0 & 0 & 0 \\
        -I_k & 0 & 0 & L \\
        -I_k & 0 & 0 & 0 \\
        0 & -1 & 0 & 0 \\
        -\sqrt{\hat{\Sigma}} & 0 & 0 & 0 \\
        0 & 0 & -1 & 0 \\
        0 & 0 & 0 & -I_k
    \end{bmatrix}. \label{eq:gaussian_primal_dro_reform_A_matrix}
\end{align}

\paragraph{Worst-case moments.}
\label{app:dro_cp_reformulation:portfolioopt:wcmoments}
Analogously to \cite[Theorem 5]{nguyen2023meancovariancerobustriskmeasurement}, we can then compute the worst-case moments as follows:
\begin{align*}
\mu^\star_L &= \hat{\mu} - \frac{\rho}{\sqrt{1+\alpha^2} \norm{w}_{(LL^\top)^{-1}}} (L L^\top)^{-1} w,
\end{align*}
\begin{align*}
\Sigma^\star_L &= \left(I + \frac{\lambda^\star (L L^\top)^{-1} w w^\top }{\gamma^\star - \lambda^\star w^\top (L L^\top)^{-1} w}\right) \hat{\Sigma} \left(I + \frac{\lambda^\star w w^\top (L L^\top)^{-1}}{\gamma^\star - \lambda^\star w^\top (L L^\top)^{-1} w} \right),
\end{align*}
where
\begin{equation*}
\gamma^\star = \frac{1}{2\rho} \sqrt{1+\alpha^2} \norm{w}_{(LL^\top)^{-1}}
\end{equation*}
and
\begin{equation*}
\lambda^\star = \left( \frac{w^\top (L L^\top)^{-1} w}{\gamma^\star} + \frac{2}{\alpha} \sqrt{w^\top \hat{\Sigma} w}  \right)^{-1}.
\end{equation*}

\subsection{Reformulation of the Linear Regression problem}
\label{app:dro_cp_reformulation:linreg}
In this section we provide convex reformulations for the distributionally robust linear regression problem with parametrized transportation cost in different settings.

\subsubsection{type-1 Mahalanobis distance and Absolute Error}
\label{app:dro_cp_reformulation:linreg:1type}
We provide here a reformulation of the distributionally robust linear regression problem under an $\ell_1$-loss function (see definition in Section \ref{numerics:linear:regression}) and show that it can be casted as a conic program. \rz{This loss is generally more forgiving (hence, robust) to large residuals than the squared loss, and therefore makes an interesting case study}. We further consider a parametrized transportation cost of the form $\kappa(\xi_1, \xi_2; \theta) = \|L^\top(\xi_1 - \xi_2)\|_2$ with $\theta = L \in \mathbb{L}_{++}^d$. By following a similar reasoning as in \cite[Theorem~4.2.1 (p.73)]{chen2020distributionally}, we obtain the following equivalence:
\begin{equation}\label{eq:regression:absolute}
    \min_{w\in\mathbb{R}^k} \max_{\mathbb{Q} \in \mathcal{B}_\varepsilon (\hat{\mathbb{P}})} \mathbb{E}_{\xi\sim\mathbb{Q}}\left( \ell_1(w,\xi) \right) = \min_{w\in\mathbb{R}^k} \frac{1}{J} \sum_{j=1}^J \abs{ (-w,1)^\top \hat{\xi}_j} + \varepsilon \norm{(-w,1)}_{\left(LL^\top\right)^{-1}},
\end{equation}%
with $w\in\mathbb{R}^k$ and $\hat{\xi}_j\in\mathbb{R}^{d=k+1}$. We can write \eqref{eq:regression:absolute} equivalently as:
\begin{equation*}
    \begin{aligned}
        \min \quad & \frac{1}{J} \mathbf{1}^\top v + \varepsilon u \\
        \textrm{subject to} \quad & a_j = v_j - y_j + w^\top x_j \\
        & b_j = v_j + y_j - w^\top x_j\\
        & \norm{z}_2 \leq u\\
        & a \geq 0, b \geq 0 \\
        & Lz + (w,0) = (0,1).
    \end{aligned}
\end{equation*}
We can now readily cast the above formulation in the standard form \eqref{standard:conic} using the conic slack vector
\begin{align*}
s=(0, a, b, (\Bar{u}, \Bar{z})) \in \mathcal{K} := \{0\}^{d} \times \mathbb{R}^{J}_+ \times \mathbb{R}^{J}_+ \times \mathrm{SOC}(d+1)
\end{align*}
and the primal variable vector $x=(v, u, z, w) \in \mathbb{R}^{J+1+d+k}$. The corresponding problem data matrix and vectors are
\begin{align*}
c &= \left(1/J, \, ...,\, 1/J,\, \varepsilon,\,0\, ...\, 0 \right) \\
b &= \left(0,\,...,\, 0,\,1, \,-y,\,y,\, 0\, ...\, 0 \right) \\
A &= \begin{bmatrix}
0 & 0 & L & [I_k,\,0] \\
-I_J & 0 & 0 & -X \\
-I_J & 0 & 0 & X \\
0 & -e_1 & [0,-I_d]^\top & 0 \\
\end{bmatrix}
\end{align*}
where $X = [x_1, \ldots, x_J]^\top \in \mathbb{R}^{J\times d}$ is the data matrix and $e_1$ is the first standard basis vector in $\mathbb{R}^d$.

Observe that \eqref{eq:regression:absolute} can be interpreted as a regularized linear regression, where the regression coefficients are penalized via $\varepsilon$ and $L$. Specifically, the regularizer could be seen as a control over the amount of ambiguity in the data and provides a rigorous theoretical foundation on why the $\ell_2$-regularizer prevents overfitting of the training data. The connection between robustness and regularization has been established in several works \cite{chen2020distributionally}.

\subsubsection{type-2 Mahalanobis distance and Squared Error}
\label{app:dro_cp_reformulation:linreg:1type:squared}
In this section, we provide the reader with the reformulation of the distributionally robust linear regression when using the squared error $\ell_2(w,\xi) = \left((-w,1)^\top \xi\right)^2$ and the parametrized transportation cost $\kappa(\xi_1, \xi_2; \theta) = \|L^\top(\xi_1 - \xi_2)\|_2^2$ with $\theta = L \in \mathbb{L}_{++}^d$. By invoking \cite[Proposition~2]{blanchet_robust_2019} , we have the following equivalence: 
\begin{equation*}
    \min_{w\in\mathbb{R}^k} \max_{\mathbb{Q} \in \mathcal{B}_\varepsilon (\hat{\mathbb{P}})} \mathbb{E}_{\xi\sim\mathbb{Q}}\left( \ell_2(w,\xi) \right) = \min_{w\in\mathbb{R}^k} \left( \sqrt{\frac{1}{J} \sum_{j=1}^J \left[ (-w,1)^\top \hat{\xi}_j \right]^2 } + \varepsilon \norm{(-w,1)}_{\left(LL^\top\right)^{-1}}  \right)^2
\end{equation*}%
The final reformulation, after recognizing nonnegativity of the terms in the square and thus optimizing its square root, follows as
\begin{equation*}
    \begin{aligned}
    \min \quad & \lambda/\sqrt{J} + \varepsilon z \\
    \textrm{subject to} \quad & \norm{q}_2 \leq z, \norm{a}_2 \leq \lambda\\
    & Lq = v \\
    & [X,y]\; v = a \\
    & v_d = 1,
    \end{aligned}
\end{equation*}
where $v_d$ is the $d$-th entry of the vector $v$. This can now be readily cast into the standard form \eqref{standard:conic} using the conic slack vector 
\begin{align*}
s=(0, (\bar{\lambda}, \bar{a}), (\bar{z}, \bar{q})) \in \mathcal{K} := \{0\}^{d+1+J} \times \mathrm{SOC}(J+1) \times \mathrm{SOC}(d+1)
\end{align*}
and the primal variable vector $x=(v, \lambda, a, z, q) \in \mathbb{R}^{d+1+J+1+d}$. The corresponding problem data matrix and vectors are
\begin{align*}
    {c} &= \left(0, 1/\sqrt{J}, \, 0\, ...\, 0,\,\varepsilon , \, 0\, ...\, 0 \right) \\
    {b} &= e_{d+1} \\
    {A} &= \begin{bmatrix}
        -I_d & 0 & 0 & 0 & L \\
        [0 ... 0\, 1] & 0 & 0 & 0 & 0 \\
        [X,y] & 0 & -I_J & 0 & 0\\
        0 & -1 & 0 & 0 & 0 \\
        0 & 0 & -I_J & 0 & 0 \\
        0 & 0 & 0 & -1 & 0 \\
        0 & 0 & 0 & 0 & -I_d
    \end{bmatrix},
\end{align*}
where $e_{d+1}$ is the unit vector with $1$ in entry $d+1$. The resulting objective value must then be squared to recover the worst-case expectation.

\section{Functions definable in an o-minimal structure}
\label{app:definability}
We provide here a brief definition of definable functions and sets, and refer the reader to the monograph \cite{coste1999introduction} for additional information.
\begin{definition}[{\cite[Definition 1.4-1.5]{coste1999introduction}}]
An o-minimal structure expanding the real closed field $\R$ is a collection $\mathcal{S}=(\mathcal{S}^n)_{n\in\N}$, where each $\mathcal{S}^n \subset \R^n$ satisfies the following
\begin{enumerate}
    \item all algebraic subsets of $\R^n$ are contained in $\mathcal{S}^n$.
    \item $\mathcal{S}^n$ is a Boolean subalgebra of $\R^n$.
    \item If $A\in \mathcal{S}^m$ and $B\in \mathcal{S}^m$, then $A \times B \in \mathcal{S}^{n+m}$.
    \item The projection onto the first $n$ coordinates of any $A\in \mathcal{S}^{n+1}$ belongs to $\mathcal{S}^n$.
    \item The elements of $\mathcal{S}^1$ are precisely the finite unions of points and intervals.
\end{enumerate}
The elements of $\mathcal{S}^n$ are called definable subsets of $\R^n$. A function is called definable if its graph is a definable set. Definable functions comprise the vast majority of functions commonly found in the field of optimization.
\end{definition}
The class of definable sets is large. In particular, all cones that are generally considered in the context of distributionally robust optimization (i.e., the exponential cone, the second-order cone, and the positive semidefinite cone) are all definable. Similarly, definable functions include almost all functions that are commonly found in the field of optimization \cite{bolte2021conservative}.

\section{Differentiating through Conic Programs}
\label{app:diff_through_conic_programs}
Conic programming generally refers to problems where the feasible set is the intersection of an affine subspace and a nonempty closed convex cone \cite{agrawal2020differentiatingconeprogram,odonoghue2016conicoptimizationoperatorsplitting,busseti2018solutionrefinementregularpoints}. 
In primal form, a conic program can be written as
\begin{equation}
    \begin{aligned}
        \min_{x,s} \quad & c^\top x \\
        \textrm{subject to} \quad & Ax + s = b \\
        & s \in \mathcal{K}
    \end{aligned}
    \label{conic_primal_1}
\end{equation}
where $x \in \mathbb{R}^n$, $s \in \mathbb{R}^m$ is a primal slack variable, and $\mathcal{K} \subseteq \mathbb{R}^m$ is a nonempty, closed, convex cone with dual cone $\mathcal{K}^* = \left\{ y \in \mathbb{R}^m \; \vert \; \inf_{z\in\mathcal{K}}y^\top z \geq 0 \right\} \subseteq \mathbb{R}^m$.

The dual of \eqref{conic_primal_1} is a conic program of the form
\begin{equation}
    \begin{aligned}
        \min_{y} \quad & b^\top y \\
        \textrm{subject to} \quad & A^\top y + c = 0 \\
        & y \in \mathcal{K}^*
    \end{aligned}
    \label{conic_dual_1}
\end{equation}
where $y \in \mathbb{R}^m$ is the dual variable.

In this section, we define the solution map $\mathcal{S}$ mapping the problem definition $(A,b,c)$ to the primal-dual solution $(x,y,s)$. Moreover, following \cite{bolte2021nonsmooth}, we provide sufficient conditions under which $\mathcal{S}$ admits a conservative Jacobian and provide an expression for it.

\subsection{Necessary optimality conditions}\label{ch:kkt}
Any primal-dual optimizers $(x,s,y)$ must satisfy the KKT conditions
\begin{subequations}
\begin{align}
    0 &= A^\top y + c, \\
    s &= -A x + b, \\
    0 &= s^\top y. \label{eq:zero_duality_gap}
\end{align}\label{eq:KKT_conditions}
\end{subequations}
Following \cite{bolte2021nonsmooth}, we can equivalently express \cref{eq:KKT_conditions} via
\begin{subequations}
\label{eq:kkt_equivalent}
\begin{align}
A^\top \Pi_{\mathcal{K}^*}(v)+c&=0,\\
-Au+v-\Pi_{\mathcal{K}^*}(v)+b&=0,
\end{align}
\end{subequations}
where $v=y-s$, $u=x$, and $\Pi_{\mathcal{K}^*}$ denotes the projector to the closed convex cone $\mathcal{K}^*$. Letting $z=(u,v)$, condition \cref{eq:kkt_equivalent} is equivalent to
\begin{align*}
\mathcal{N}(z,A,b,c):=(Q(A,b,c)-I)\Pi(z)+V(b,c)+z=0,
\end{align*}
where $\Pi$ denotes the projection onto $\R^n \times \mathcal{K}^*$, and
\begin{align*}
Q(A,b,c)=\begin{bmatrix}
0&A^\top\\-A&0
\end{bmatrix}\in\R^{N\times N},~~
V(b,c)=\begin{bmatrix}c\\b\end{bmatrix}\in\R^N,
\end{align*}
with $N=m+n$.

\subsection{The solution map}
The solution map $\mathcal{S}: (A,b,c) \mapsto (x,y,s)$ is defined as the mapping from the optimization problem data $(A, b, c)$ to the vectors $(x,y,s)$ that satisfy the KKT conditions of the conic optimization problem, assuming such vectors are unique.

One way to define $\mathcal{S}$ is through the following composition of functions \cite{bolte2021nonsmooth}:
\begin{align}
\mathcal{S}(A,b,c)=[\phi\circ \nu](A,b,c),  \label{eq:sol_map_composition}  
\end{align}
where
\begin{itemize}
    \item $\nu:\R^{m \times n} \times \R^m \times \R^n\to \R^N$ is implicitly defined as $\mathcal{N}(\nu(A,b,c),A,b,c)=0$;
    \item $\phi:\R^N\to\R^n\times\R^m\times\R^m$ is defined as $\phi(u,v)=(u,\Pi_{\mathcal{K}^*}(v),\Pi_{\mathcal{K}^*}(v)-v)$.
\end{itemize}

\subsection{Derivative of the Solution Map}
\label{ch:derivativesolutionmap}

The Jacobian of $\mathcal{S}$ can be obtained by computing the Jacobians of $\nu$ and $\phi$, and applying the chain rule of differentiation to \cref{eq:sol_map_composition}.

\begin{proposition}[{\cite[Proposition 4]{bolte2021nonsmooth}}]\label{proposition:diff}
Assume $\Pi_{\mathcal{K}^*}$ is locally Lipschitz and definable with convex conservative Jacobian $\J_{\Pi_{\mathcal{K}^*}}$. Let $\J_\mathcal{N}$ be the convex conservative Jacobian of the residual $\mathcal{N}$. Assume that given any $z=\nu(A,b,c)$, and $[J_z,J_A,J_b,J_c]\in\J_{\mathcal{N}}(z,A,b,c)$, all the matrices $J_z$ are invertible. Then $\mathcal{S}$ is locally Lipschitz and definable with conservative Jacobian $\J_{\mathcal{S}}(A,b,c)=\J_{\phi}(\nu(A,b,c))\J_\nu(A,b,c)$, where
\begin{align*}
\J_\nu(A,b,c)=\{ -U^{-1}V:[U~V]\in\J_\mathcal{N}(\nu(A,b,c),A,b,c)\},~~
\J_{\phi}(z)=\begin{bmatrix}
I&0\\0&\J_{\Pi_{\mathcal{K}^*}}(v)\\0& \J_{\Pi_{\mathcal{K}^*}}(v)-I
\end{bmatrix}.
\end{align*}
\end{proposition}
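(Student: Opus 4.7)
The plan is to exploit the factorization $\mathcal{S} = \phi \circ \nu$ provided in \eqref{eq:sol_map_composition} and combine a nonsmooth implicit function theorem with the chain rule for conservative Jacobians. First, I would apply the nonsmooth implicit function theorem of \cite{bolte2021nonsmooth} to the relation $\mathcal{N}(\nu(A,b,c), A, b, c) = 0$ that defines $\nu$. The hypothesis that every $J_z$ appearing in any element $[J_z \; J_A \; J_b \; J_c]$ of $\J_\mathcal{N}(z, A, b, c)$ is invertible is exactly the nonsmooth counterpart of the classical full-rank condition on the partial Jacobian with respect to the implicit variable. Coupled with local Lipschitzness and definability of $\mathcal{N}$, which follow from the formula $\mathcal{N}(z, A, b, c) = (Q(A,b,c) - I)\Pi(z) + V(b,c) + z$ and the assumed properties of $\Pi$, this yields that $\nu$ is locally Lipschitz and definable, and it produces the desired formula
\[
\J_\nu(A, b, c) = \{-U^{-1}V : [U \; V] \in \J_\mathcal{N}(\nu(A,b,c), A, b, c)\}.
\]

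Second, I would verify the analogous properties for the ``decoding'' map $\phi(u, v) = (u, \Pi_{\mathcal{K}^*}(v), \Pi_{\mathcal{K}^*}(v) - v)$. Local Lipschitz continuity and definability are immediate: the first component is the identity and the remaining two are affine in $\Pi_{\mathcal{K}^*}(v)$ and $v$, so both properties are inherited from the stated assumptions on $\Pi_{\mathcal{K}^*}$. A direct block calculation then yields the claimed form of $\J_\phi$. Here, the convexity assumption on $\J_{\Pi_{\mathcal{K}^*}}$ is important, since it will guarantee in the next step that the Minkowski-type product of the two Jacobians is itself a valid conservative Jacobian rather than a mere outer approximation.

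Third, I would invoke the chain rule for conservative Jacobians, which states that for locally Lipschitz definable maps admitting convex conservative Jacobians, the composition admits the product $\J_\phi(\nu(A,b,c)) \, \J_\nu(A,b,c)$ as a conservative Jacobian. Applied to $\mathcal{S} = \phi \circ \nu$, this gives the stated expression $\J_\mathcal{S}(A,b,c) = \J_\phi(\nu(A,b,c)) \J_\nu(A,b,c)$. Local Lipschitzness and definability of $\mathcal{S}$ follow from preservation of these properties under composition.

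The main obstacle will be the careful handling of the nonsmooth implicit function theorem: establishing that the pointwise invertibility hypothesis on $J_z$ transfers to a genuine locally Lipschitz parametrization $\nu$ around $(A, b, c)$, since the classical smooth implicit function theorem does not apply directly through $\Pi_{\mathcal{K}^*}$. Once this step is secured, the remaining work, namely verifying definability of $\mathcal{N}$ (routine, as it is a polynomial in $(A, b, c, z)$ composed with the definable projection $\Pi$) and computing the block form of $\J_\phi$, reduces to a direct calculation.
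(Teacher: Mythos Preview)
The paper does not actually prove this proposition; it is quoted verbatim from \cite[Proposition~4]{bolte2021nonsmooth} and used as a black box. Your outline---nonsmooth implicit function theorem applied to $\mathcal{N}(\nu(A,b,c),A,b,c)=0$ to obtain $\J_\nu$, direct block computation of $\J_\phi$ from the affine structure of $\phi$ in $\Pi_{\mathcal{K}^*}$, then the conservative chain rule for the composition $\mathcal{S}=\phi\circ\nu$---is precisely the argument given in that reference, so there is nothing to compare against within this paper.
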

The residual map $\mathcal{N}$ can be obtained by simple addition and product operations starting from $(A,b,c)$ and $\Pi_{\mathcal{K}^*}$; therefore, it is always locally Lipschitz and definable in $(A,b,c)$ if the same holds for the projector $\Pi_{\mathcal{K}^*}$. We now prove that $\Pi$ is also locally Lipschitz and definable under the following, mild conditions.
\begin{assumption}\label{ass:K_definable}
The set $\mathcal{K}$ is definable in an o-minimal structure.
\end{assumption}
\begin{assumption}\label{ass:metric_regularity}
The primal \cref{conic_primal_1} and the dual \cref{conic_dual_1} satisfy the metric regularity condition
\begin{align}
b\in \operatorname*{int}\{ \operatorname*{span}A + \mathcal{K} \} ,~~~ c \in \operatorname*{int}\{ A^{\top} \mathcal{K}^* \}. \label{eq:metric_regularity_primal_dual}
\end{align}
\end{assumption}
Metric regularity ensures zero duality gap \cite[Theorem 3.6]{bonnans2013perturbation}, meaning that the KKT conditions are necessary and sufficient conditions of optimality. If $\operatorname*{int}\mathcal{K}$ is nonempty, then metric regularity of the primal problem is equivalent to \emph{Slater's constraint qualification}, i.e., to the existence of some $\bar{x}\in \mathcal{Q}$ such that $G \bar{ x} \in \operatorname*{int} \mathcal{K}$ \cite[Proposition 2.106]{bonnans2013perturbation}. Slater's constraint qualification is a mild assumption which is \emph{generic} in linear conic programs like \cref{conic_primal_1}, that is, it holds for almost all combinations of problem parameters \cite{dur2012slater}.
\begin{lemma}\label{lemma:proj_is_path_diff}
Under \cref{ass:K_definable,ass:metric_regularity}, let $(x,s,y)$ be a primal-dual optimizer of \cref{conic_primal_1}, and let $z=(x,y-s)$. Then $\Pi$ is locally Lipschitz and definable at $z$ and $\Pi(z)=(x,y)$.
\end{lemma}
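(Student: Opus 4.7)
The plan is to split the conclusion into three independent pieces: (i) the identity $\Pi(z)=(x,y)$; (ii) local Lipschitzness of $\Pi$ at $z$; and (iii) definability of $\Pi$ at $z$. Since $\Pi$ is the Euclidean projection onto the Cartesian product $\mathbb{R}^n \times \mathcal{K}^*$, it decouples block-wise: $\Pi(u,v)=(u,\Pi_{\mathcal{K}^*}(v))$. Therefore $\Pi(z)=(x,\Pi_{\mathcal{K}^*}(y-s))$, and (i) reduces to the single claim $\Pi_{\mathcal{K}^*}(y-s)=y$.

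For this claim the natural tool is Moreau's decomposition theorem: every $v \in \mathbb{R}^m$ admits a unique representation $v=p-q$ with $p \in \mathcal{K}^*$, $q \in (\mathcal{K}^*)^\circ = \mathcal{K}$, and $p^\top q=0$, in which case $p=\Pi_{\mathcal{K}^*}(v)$. I would apply this to $v=y-s$ and verify that the triple $(p,q)=(y,s)$ satisfies the three requirements. Dual feasibility gives $y\in\mathcal{K}^*$, primal feasibility gives $s\in\mathcal{K}$, and complementarity $y^\top s=0$ is precisely the third KKT condition in \eqref{eq:KKT_conditions}. This is exactly where metric regularity (Assumption \ref{ass:metric_regularity}) enters: combined with primal-dual optimality of $(x,s,y)$, it yields zero duality gap and hence the full KKT system, including complementarity. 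Uniqueness of the Moreau decomposition then delivers $\Pi_{\mathcal{K}^*}(y-s)=y$.

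Item (ii) is immediate: the Euclidean projection onto any nonempty closed convex subset of $\mathbb{R}^N$ is globally 1-Lipschitz, and $\mathbb{R}^n \times \mathcal{K}^*$ is such a set, so $\Pi$ is non-expansive everywhere, irrespective of the base point $z$.

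The least routine piece is (iii). Under Assumption \ref{ass:K_definable} the cone $\mathcal{K}$ is definable; I would then observe that $\mathcal{K}^*=\{y\in\mathbb{R}^m:y^\top z\geq 0 \text{ for all } z\in\mathcal{K}\}$ is carved out by a first-order formula over $\mathcal{K}$ and is therefore definable by closure of definable sets under quantification in an o-minimal structure, and so is the product $\mathbb{R}^n \times \mathcal{K}^*$. Finally, $\Pi$ is implicitly characterized as the unique minimizer of the definable strictly convex map $w\mapsto\|w-z\|^2$ over this definable set, so its graph $\{(z,w):w\in\mathbb{R}^n\times\mathcal{K}^*,\ \|w-z\|^2\leq\|w'-z\|^2\ \forall w'\in\mathbb{R}^n\times\mathcal{K}^*\}$ is definable; hence $\Pi$ itself is definable. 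I expect the main subtlety to lie in invoking the o-minimal closure properties cleanly so as to transfer definability from $\mathcal{K}$ to $\Pi$; once this is in place, the rest collapses to standard Moreau decomposition plus non-expansiveness of convex projections.
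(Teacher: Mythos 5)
Your proof is correct and follows essentially the same route as the paper's: definability of $\Pi$ is obtained by exhibiting a first-order (hence definable) characterization of the projector's graph over the definable cone $\mathcal{K}^*$ (you use the argmin-of-distance description, the paper the equivalent normal-cone condition $y-s-v\in N_{\mathcal{K}^*}(v)$), and Lipschitzness comes from nonexpansiveness of projections onto closed convex sets. Your explicit Moreau-decomposition argument for $\Pi_{\mathcal{K}^*}(y-s)=y$ via primal/dual feasibility and complementarity actually fills in a step the paper leaves implicit; the only slip is notational, since $(\mathcal{K}^*)^\circ=-\mathcal{K}$ rather than $\mathcal{K}$, which is harmless because you write the decomposition in the form $v=p-q$ with $q\in\mathcal{K}$.
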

\begin{proof}
Under metric regularity, $z$ solves the primal-dual embedding, meaning that $y\in \mathcal{K}^*$. Since $\Pi(z)=\Pi(x,y-s)=(x,\Pi_{\mathcal{K}^*}(y-s))$, and the map $z\mapsto x$ is linear, we only have to prove that the projector $\Pi_{\mathcal{K}^*}$ is path-differentiable at $y-s$. The point $v=\Pi_{\mathcal{K}^*}(y-s)$ uniquely satisfies $y-s-v\in N_{\mathcal{K}^*}(v)$, $v\in \mathcal{K}^*$. The set $N_{\mathcal{K}^*}(v)=\{ w: \langle w,z-v \rangle \leq 0,~ \forall z\in \mathcal{K}^* \}$ is definable, since it is defined by a first order condition \cite[Condition 4, page 12]{coste1999introduction}, and $\mathcal{K}^*$ is definable as an immediate consequence of \cref{ass:K_definable}. Therefore, $v$ is the unique element of a definable set, and the set $\{ (y,\Pi_{\mathcal{K}^*}(y)) : y\in \mathcal{K}^*\}$ is definable. This proves that $\Pi_{\mathcal{K}^*}$ is definable. Since projections to convex sets are Lipschitz, this concludes the proof.
\end{proof}

\begin{remark}
\cref{proposition:diff} provides sufficient conditions under which $\mathcal{S}$ is locally Lipschitz and definable in $(A,b,c)$. Since $(A,b,c)$ are parameterized in $\theta$, to fulfill the assumption on $x^\star(\theta)$ and $c$ in \cref{ass:path_diff_bilevel} we simply require that $(A,b,c)$ are Lipschitz continuous and definable functions of $\theta$, which is not a restrictive assumption.
\end{remark}
\subsubsection{Computing the Forward Derivative}
Computing the forward derivative of the solution map amounts to evaluating the product $J_\mathcal{S}(A,b,c)(\mathrm{d}A,\mathrm{d}b,\mathrm{d}c)$ between an element $J_\mathcal{S}(A,b,c)$ of the conservative Jacobian $\J_\mathcal{S}(A,b,c)$ of $\mathcal{S}$ and a direction $(\mathrm{d}A,\mathrm{d}b,\mathrm{d}c)$. Using \cref{proposition:diff} we have
\begin{equation*}
    (\mathrm{d}x, \mathrm{d}y, \mathrm{d}s) = J_\mathcal{S}(A,b,c)(\mathrm{d}A, \mathrm{d}b, \mathrm{d}c) = J_\phi(z)J_\nu(A,b,c)(\mathrm{d}A, \mathrm{d}b, \mathrm{d}c)
\end{equation*}
where $J_\phi(z)\in \J_{\phi}(z)$, and $J_\nu(A,b,c)\in\J_{\nu}(A,b,c)$ can be computed by solving
\begin{equation}
J_\nu(A,b,c) = \operatorname*{argmin}_{z} ~~ \| J_1x+J_2(\mathrm{d}A, \mathrm{d}b, \mathrm{d}c) \|^2, ~~ [J_1,J_2]\in\J_{\mathcal{N}}(z,A,b,c). \label{eq:app:differentiating:forward}
\end{equation}
Letting $J_\nu(A,b,c)=(\mathrm{d}u,\mathrm{d}v)$, the forward derivative $(\mathrm{d}x,\mathrm{d}y,\mathrm{d}s)$ of the primal-dual solution map can be obtained as follows
\begin{equation*}
    \begin{bmatrix}
        \mathrm{d}x \\
        \mathrm{d}y \\
        \mathrm{d}s
    \end{bmatrix} = \begin{bmatrix}
        \mathrm{d} u \\
        J_{\Pi_{\mathcal{K}^*}}(\beta) \mathrm{d}v \\
        J_{\Pi_{\mathcal{K}^*}}(\beta) \mathrm{d}v - \mathrm{d}v
    \end{bmatrix}.
\end{equation*}

\subsubsection{Computing the Adjoint Derivative}
For reverse auto-differentiation, we are interested in computing the product between $J_\mathcal{S}(A,b,c)^\top$, with $J_\mathcal{S}(A,b,c)\in\J_{\mathcal{S}}(A,b,c)$ and a direction $(\mathrm{d}x,\mathrm{d}y,\mathrm{d}s)$
\begin{equation}
    (\mathrm{d}A, \mathrm{d}b, \mathrm{d}c) = J_\mathcal{S}(A,b,c)^\top (\mathrm{d}x, \mathrm{d}y, \mathrm{d}s) = J_\nu^\top(A,b,c)^\top J_\phi(z)^\top (\mathrm{d}x, \mathrm{d}y, \mathrm{d}s). \label{eq:backward_diff}
\end{equation}
To evaluate \cref{eq:backward_diff}, we first compute
\begin{equation*}
    \mathrm{d}z = J_\phi(z)^\top (\mathrm{d}x, \mathrm{d}y, \mathrm{d}s) = \begin{bmatrix}
        \mathrm{d}x \\
        J_{\Pi_{\mathcal{K}^*}}^\top (v) (\mathrm{d}y + \mathrm{d}s) - \mathrm{d}s
    \end{bmatrix}.
\end{equation*}
Next, we can obtain $\mathrm{d}s=J_\mathcal{S}(A,b,c)^\top \mathrm{d}z$ by solving
\begin{equation}
\mathrm{d}s=\operatorname*{argmin}_{ds}~~\| J_1 ds + J_2 \mathrm{d}z \|^2,~~~ [J_1~J_2]\in\J_{\mathcal{N}}(z,A,b,c). \label{eq:app:differentiating:backward}
\end{equation}
%
%
%

\section{Differentiating through Optimal Transport Problems}
\label{app:diff_ot}
This appendix shows how a parametrized optimal transport problem can be computed and differentiated. To this end, we focus as an example on the type-p Mahalanobis distance with transportation cost $\kappa(\xi_1, \xi_2; \theta) = \|L^\top(\xi_1, \xi_2)\|_2^p$ with $\theta = L \in \mathbb{L}_{++}^d$. Other possible transportation cost parametrizations follow similarly. In particular, we will consider distance calculations between two discrete distributions (Appendix~\ref{ch:discrete_param_wassdist}) and between two Gaussian distributions (Appendix~\ref{ch:gaussian_param_wassdist}).

\subsection{Parametrized Mahalanobis distance between discrete distributions}
\label{ch:discrete_param_wassdist}
Given two discrete distributions, it is possible to calculate their parametrized Mahalanobis distance efficiently by solving a linear program.
\begin{fact}[Calculation of the discrete Mahalanobis distance]
    The parametrized type-$p$ Mahalanobis distance $d(\mathbb{P}, \mathbb{Q}; \theta)$ between two discrete distributions $\mathbb{P} \triangleq \sum_{i=1}^I p_i \delta_{x_i}$ and $\mathbb{Q} \triangleq  \sum_{j=1}^J q_j \delta_{y_j}$ can be calculated as the $p$-th root of the optimal value of the following linear program:
    \begin{equation}
        \begin{aligned}
            d(\mathbb{P}, \mathbb{Q}; \theta)^p = \min_{\pi_{ij}} \quad & \sum_{i=1}^{I} \sum_{j=1}^{J} \norm{L^\top (x_i - y_j)}_2^p \pi_{ij} =: \Tilde{c}(L)^\top \Tilde{\pi}^\star\\
            \emph{subject to} \quad & \sum_{i=1}^{I} \pi_{ij} = q_j \quad \forall j \in [J] \\
            & \sum_{j=1}^{J} \pi_{ij}= p_i \quad \forall i \in [I] \\
            & \pi_{ij} \geq 0 \quad \forall i \in [I],\; j \in [J].
        \end{aligned}
    \end{equation}
    \label{fac:calc_discrete_wassdist}
\end{fact}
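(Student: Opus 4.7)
The plan is to reduce the infimum defining the type-$p$ Mahalanobis distance to a finite-dimensional linear program by exploiting the atomic structure of both marginals, and then raise the resulting optimal value to the $1/p$ power to recover $d(\mathbb{P}, \mathbb{Q}; \theta)$. This is the standard discrete Kantorovich reformulation; the only care needed is in handling the parametric dependence on $L$ and the $p$-th power convention.

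First, I would characterize the set $\Pi(\mathbb{P}, \mathbb{Q})$. Since both marginals concentrate on finite atom sets, every coupling $\pi \in \Pi(\mathbb{P}, \mathbb{Q})$ must itself be supported on the product grid $\{x_i\}_{i \in [I]} \times \{y_j\}_{j \in [J]}$; otherwise one of its marginals would place mass outside the support of $\mathbb{P}$ or $\mathbb{Q}$. Hence there exist non-negative weights $\{\pi_{ij}\}$ with $\pi = \sum_{i,j} \pi_{ij} \delta_{(x_i, y_j)}$, and the marginal consistency conditions $\pi(\cdot \times \R^d) = \mathbb{P}$ and $\pi(\R^d \times \cdot) = \mathbb{Q}$ translate directly into the row and column sum constraints $\sum_j \pi_{ij} = p_i$ and $\sum_i \pi_{ij} = q_j$. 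Substituting this representation into the expectation with $\kappa(\xi_1, \xi_2; \theta) = \|L^\top(\xi_1 - \xi_2)\|_2^p$ yields the finite sum $\sum_{i,j} \|L^\top(x_i - y_j)\|_2^p \pi_{ij} = \tilde{c}(L)^\top \tilde{\pi}$, which is a linear functional of the decision variables.

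Second, I would verify well-posedness of the resulting LP. The transportation polytope is non-empty (e.g., the independent coupling $\pi_{ij} = p_i q_j$ is feasible whenever $\sum_i p_i = \sum_j q_j = 1$), closed, and bounded by the normalization $\sum_{i,j} \pi_{ij} = 1$; hence the linear objective attains its minimum. Thus the infimum in the definition of the optimal-transport discrepancy equals the LP value, and under the convention that the type-$p$ Mahalanobis distance is the $p$-th root of the minimum expected $p$-th-power cost, the LP value coincides with $d(\mathbb{P}, \mathbb{Q}; \theta)^p$, as claimed.

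I do not anticipate a genuine obstacle here: the derivation is a direct specialization of the Kantorovich formulation to atomic marginals, where the parameter $L$ enters only through the cost coefficients $\tilde{c}(L)$ and plays no role in the feasibility argument. The sole subtlety worth flagging is notational, namely that the fact states the identity in terms of $d(\mathbb{P},\mathbb{Q};\theta)^p$ rather than $d(\mathbb{P},\mathbb{Q};\theta)$, reflecting the convention that the type-$p$ distance includes the outer $p$-th root of the expected $p$-th power cost.
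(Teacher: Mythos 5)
Your proof is correct and follows exactly the standard discrete Kantorovich reduction that the paper implicitly relies on; the paper states this as a Fact without providing any proof, so your argument (couplings of atomic marginals are supported on the product grid, marginals become row/column sums, the transportation polytope is a nonempty compact polytope so the linear objective attains its minimum) supplies precisely the missing details. No gaps.
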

The derivative of the parametrized Wasserstein distance with respect to the cost matrix $L$ between two discrete Distributions is also calculated efficiently as follows.
\begin{fact}[Differentiation of the discrete Mahalanobis distance]
    Suppose the solution map of \cref{fac:calc_discrete_wassdist} is differentiable at $(\mathbb{P},\mathbb{Q})$. By the envelope theorem \cite{milgrom2002envelope}, the gradient of the type-$p$ Mahalanobis distance between two discrete distributions $\mathbb{P} \triangleq  \sum_{i=1}^I p_i \delta_{x_i}$ and $\mathbb{Q} \triangleq  \sum_{j=1}^J q_j \delta_{y_j}$ with respect to the parameter matrix $L\in\mathbb{L}^d_{++}$ is given by
    \begin{equation}
        \pdv{d(\mathbb{P}, \mathbb{Q}; \theta)^p}{L_{k l}} = \sum_{r=1}^{I\cdot J} \pdv{\Tilde{c}_r(L)}{L_{k l}} \Tilde{\pi}_r^\star(L).
    \end{equation}
    By defining $\Delta_r = (x_i-y_j)$ for all combinations $r$ of $i\in I$ and $j\in J$ it is possible to write 
    \begin{equation}
        \pdv{c_r(L)}{L} = \frac{\Delta_r\Delta_r^\top L}{c_r(L)}.
    \end{equation}
    \label{fac:diff_discrete_wassdist}
\end{fact}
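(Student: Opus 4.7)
The plan is to handle the two claimed identities separately. The first follows from the envelope theorem for parametric linear programs, exploiting the fact that in the LP of Fact~\ref{fac:calc_discrete_wassdist} only the cost vector depends on $L$, while the marginal constraints depend solely on the masses $(p_i,q_j)$. The second is a direct chain-rule computation on the scalar map $L \mapsto \|L^\top \Delta_r\|_2$.

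For the first identity, I would write the LP compactly as $V(L) = \min_{\tilde{\pi}\in\mathcal{F}}\,\tilde{c}(L)^\top \tilde{\pi}$, where $\mathcal{F} = \{\tilde{\pi} \geq 0 : A\tilde{\pi} = m\}$ encodes the two marginal constraints and does \emph{not} depend on $L$. Under the stated differentiability of $L\mapsto \tilde{\pi}^\star(L)$, I would differentiate $V(L) = \tilde{c}(L)^\top \tilde{\pi}^\star(L)$ via the product rule and obtain
\begin{equation*}
\pdv{V}{L_{k l}} \;=\; \sum_r \pdv{\tilde{c}_r(L)}{L_{k l}}\,\tilde{\pi}_r^\star(L) \;+\; \sum_r \tilde{c}_r(L)\,\pdv{\tilde{\pi}_r^\star(L)}{L_{k l}}.
\end{equation*}
The key step is showing that the second sum vanishes. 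To do so, I would invoke the KKT conditions: decomposing $\tilde{c}(L) = A^\top \lambda^\star(L) - \mu^\star(L)$ with $\mu^\star \geq 0$ and complementary slackness $\mu^\star_r \tilde{\pi}^\star_r = 0$; differentiating the feasibility equality $A\tilde{\pi}^\star(L) = m$ in $L$ to obtain $A\,\partial_{L_{k l}}\tilde{\pi}^\star = 0$, which kills the $A^\top \lambda^\star$ contribution; and noting that the residual $\mu^\star{}^\top \partial_{L_{k l}}\tilde{\pi}^\star$ vanishes componentwise, since on indices with $\tilde{\pi}^\star_r>0$ one has $\mu^\star_r=0$, while on indices with $\tilde{\pi}^\star_r=0$ the differentiable optimizer stays at the boundary and its derivative vanishes by local uniqueness. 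This yields the first claimed identity.

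For the second identity, I would square to obtain $c_r(L)^2 = \Delta_r^\top L L^\top \Delta_r$ and differentiate both sides in $L$, producing $2\,c_r(L)\,\partial_L c_r(L) = 2\,\Delta_r \Delta_r^\top L$. Dividing by $2c_r(L)$, which is strictly positive whenever $\Delta_r \neq 0$ since $L \in \mathbb{L}_{++}^d$, yields $\partial_L c_r(L) = \Delta_r \Delta_r^\top L / c_r(L)$ as claimed.

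The main obstacle is really the envelope-theorem step: parametric LPs have piecewise-linear value functions that may fail to be smooth at basis-change points, so one cannot naively commute the derivative with the minimization. The standing hypothesis that the solution map is differentiable at the nominal $(\mathbb{P},\mathbb{Q})$ precisely rules out such degeneracy and reduces the argument to the classical Milgrom--Segal envelope theorem already cited. The cost-derivative step is routine, requiring only that $L$ stays off the singular locus $\{L : L^\top \Delta_r = 0\}$, which is guaranteed by $L \in \mathbb{L}_{++}^d$ and $\Delta_r \neq 0$.
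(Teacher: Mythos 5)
Your proposal is correct and follows essentially the same route as the paper, which justifies the first identity purely by citing the envelope theorem for the parametric LP (whose feasible set is independent of $L$) and treats the cost derivative as a routine computation; your KKT argument is just the standard proof of that envelope step made explicit, and your squaring trick for $\partial_L c_r$ matches the claimed formula. The only slight imprecision is the phrase ``its derivative vanishes by local uniqueness'' for components with $\tilde{\pi}^\star_r=0$: the clean justification is that $L\mapsto\tilde{\pi}^\star_r(L)$ is nonnegative everywhere and attains the value $0$ at the nominal point, so differentiability there forces a zero derivative.
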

The path-differentiability assumption in \cref{ass:path_diff_bilevel} is fulfilled under the assumptions of \cref{proposition:diff}. Practically, $d$ is almost everywhere differentiable and its conservative Jacobian $\J_{d}$ of $d(\cdot, \cdot \; \theta)$ is almost everywhere equal to the gradient of $d$, which can be computed through Fact~\ref{fac:diff_discrete_wassdist}.

\subsubsection{Nonconvexity of the parametrized discrete Mahalanobis distance} \label{ch:convexity_param_discrete_wassdist}
We show that the parametrized discrete Mahalanobis distance is nonconvex with respect to the parameter matrix $L$ by providing counterexamples for the case $p=1$ and $p=2$. For simplicity, let $d_p(\mathbb{P},\mathbb{Q}; L)$ be the type-$p$ Wasserstein distance between the two discrete distributions $\mathbb{P}$ and $\mathbb{Q}$ with respect to the cost matrix $L$.

\paragraph{type-1 discrete Mahalanobis distance} Consider the following two discrete distributions:
\begin{align}
    &\mathbb{P} \triangleq  0.4 \delta_{x_1} + 0.6 \delta_{x_2} \text{ with } x_1 = (0.7,0.4) \text{ and } x_2=(1.7,1.0) \nonumber\\
    &\mathbb{Q} \triangleq  0.5 \delta_{y_1} + 0.5 \delta_{y_2} \text{ with } y_1 = (1.8,0.1) \text{ and } y_2=(0.5,1.4).\nonumber
\end{align}
Next, consider
\begin{align}
    L_1 = \begin{bmatrix}
        1 & 0\\
        0.5 & 0.5
    \end{bmatrix} \text{ and } L_2 = \begin{bmatrix}
        0.5 & 0\\
        1.0 & 1.0
    \end{bmatrix}\nonumber.
\end{align}
Solving the linear program in Fact~\ref{fac:calc_discrete_wassdist} yields
\begin{align}
    d_1(\mathbb{P},\mathbb{Q};L_1) &= 0.6203, \nonumber\\
    d_1(\mathbb{P},\mathbb{Q};L_2) &= 0.5036, \nonumber\\
    d_1(\mathbb{P},\mathbb{Q};0.5\cdot L_1 + 0.5 \cdot L_2) &= 0.6820, \nonumber
\end{align}
which indicates that
\begin{equation}
    d_1(\mathbb{P},\mathbb{Q};0.5\cdot L_2+0.5\cdot L_1) > 0.5\cdot d_1(\mathbb{P},\mathbb{Q};L_1) + 0.5\cdot d_1(\mathbb{P},\mathbb{Q};L_2), \nonumber
\end{equation}
thus proving the nonconvexity of the type-$1$ Mahalanobis distance.

\paragraph{type-2 discrete Mahalanobis distance} Consider the following two discrete distributions:
\begin{align}
    &\mathbb{P} \triangleq  0.6 \delta_{x_1} + 0.4 \delta_{x_2} \text{ with } x_1 = (1.2,1.9) \text{ and } x_2=(0.1,0.1) \nonumber \\
    &\mathbb{Q} \triangleq  0.6 \delta_{y_1} + 0.4 \delta_{y_2} \text{ with } y_1 = (0.2,1.4) \text{ and } y_2=(1.4,0.3). \nonumber
\end{align}
Next, consider
\begin{align}
    L_1 = \begin{bmatrix}
        1 & 0\\
        0.5 & 0.5
    \end{bmatrix} \text{ and } L_2 = \begin{bmatrix}
        0.5 & 0\\
        1.0 & 0.5
    \end{bmatrix}\nonumber.
\end{align}
Solving the linear program in Fact~\ref{fac:calc_discrete_wassdist} yields
\begin{align}
    d_2(\mathbb{P},\mathbb{Q};L_1) &= 1.0578, \nonumber\\
    d_2(\mathbb{P},\mathbb{Q};L_2) &= 0.9646, \nonumber\\
    d_2(\mathbb{P},\mathbb{Q};0.5\cdot L_1 + 0.5 \cdot L_2) &= 1.1433, \nonumber
\end{align}
which indicates that
\begin{equation}
    d_2(\mathbb{P},\mathbb{Q};0.5\cdot L_2+0.5\cdot L_1) > 0.5\cdot d_2(\mathbb{P},\mathbb{Q};L_1) + 0.5\cdot d_2(\mathbb{P},\mathbb{Q};L_2), \nonumber
\end{equation}
thus proving the nonconvexity of the type-$2$ Mahalanobis distance.

\subsection{Parametrized Gelbrich distance}
\label{ch:gaussian_param_wassdist}
We define the parametrized Gelbrich distance as follows.
\begin{definition}[Parametrized Gelbrich distance]
    The parameterized Gelbrich distance between two mean-covariance pairs $(\mu_\mathbb{Q},\;\Sigma_\mathbb{Q})$ and $(\mu_\mathbb{P},\;\Sigma_\mathbb{P})$ in $\mathbb{R}^d\times \mathbb{S}^d_+$, where $\mathbb{S}^d_+$ denotes the set of positive semidefinite symmetric matrices, is defined as
    \begin{align}
        g((\mu_\mathbb{Q},\;\Sigma_\mathbb{Q}), (\mu_\mathbb{P},\;\Sigma_\mathbb{P}); \, L) 
        &\coloneqq \Biggl( \norm{L^\top(\mu_\mathbb{Q} - \mu_\mathbb{P})}^2 + \Tr\left[  \left(\Sigma_\mathbb{Q} + \Sigma_\mathbb{P}\right) LL^\top \right]\nonumber \\
        &- 2 \Tr \left[\left( \Sigma_\mathbb{P}^{\frac{1}{2}} LL^\top \Sigma_\mathbb{Q} LL^\top \Sigma_\mathbb{P}^{\frac{1}{2}} \right)^{\frac{1}{2}} \right] \Biggr)^{\frac{1}{2}},
    \end{align}
    for the parameter matrix $L\in\mathbb{L}^d_{++}$.
    \label{def:calc_parametrized_gelbrich_distance}
\end{definition}
Note that for $\mathbb{P} \triangleq \mathcal{N}(\mu_\mathbb{P}, \Sigma_{\mathbb{P}})$ and $\mathbb{Q} \triangleq \mathcal{N}(\mu_\mathbb{Q}, \Sigma_{\mathbb{Q}})$, one has
    \begin{equation}
        g((\mu_\mathbb{Q},\;\Sigma_\mathbb{Q}), (\mu_\mathbb{P},\;\Sigma_\mathbb{P}); \, L) = d_2(\mathbb{Q},\mathbb{P}; L).
    \end{equation}
The local Lipschitz continuity and definability assumption in \cref{ass:path_diff_bilevel} is immediately verified by the Gelbrich distance since the definition in \cref{def:calc_parametrized_gelbrich_distance} involves compositions of definable functions (square root, trace, transposition, product). Moreover, since the Gelbrich distance is continuously differentiable everywhere, its conservative Jacobian coincides with its gradient \cite[Theorem 1]{bolte2021conservative}.

We can efficiently differentiate the parametrized Gelbrich distance with respect to the parameter matrix $L$ as follows. To shorten notation, we now write $C = (\mu_\mathbb{Q} - \mu_\mathbb{P})(\mu_\mathbb{Q} - \mu_\mathbb{P})^\top + \Sigma_\mathbb{Q} + \Sigma_\mathbb{P}$, $A = \sqrt{\Sigma_\mathbb{Q}}$, and $B = \sqrt{\Sigma_\mathbb{P}}$. As the derivative of the square root and the chain rule in the scalar case are well-known, we focus on the derivative of the following function in $L$
\begin{equation}
    h(L) = \Tr\left[ CLL^\top \right] - 2 \Tr \left[ \sqrt{ B LL^\top A A LL^\top B}\right]. \nonumber
\end{equation}
The derivative of the first term is $\pdv{}{L} \Tr\left[ CLL^\top \right] = CL + C^\top L$ \cite{Petersen2008}. To compute the derivative of the second term, let $f(L) = \Tr \left[ \sqrt{ B LL^\top A A LL^\top B}\right]$ and
{\footnotesize
\begin{align}
    &\mathcal{H}: \mathbb{R}^{d\times d} \rightarrow \mathbb{S}_{++}^d, \quad&& \mathcal{H}(X) = XX^\top, \quad&&& \partial H = \left( \partial X \right) X^\top + X \left( \partial X \right)^\top \nonumber \\
    &\mathcal{P}: \mathbb{S}_{++}^{d} \rightarrow \mathbb{R}^{d\times d}, \quad&& \mathcal{P}(X) = BXA, \quad&&& \partial P = B \left(\partial X\right) A \nonumber\\
    &\mathcal{S}: \mathbb{S}_{++}^{d} \rightarrow \mathbb{S}_{++}^{d}, \quad&& \mathcal{S}(X) = \sqrt{X}, \quad&&& \partial S \text{ satisfies: } \quad \left(\partial S\right) S + S \left(\partial S\right) = \partial X \nonumber\\
    &\mathcal{Y}: \mathbb{S}_{++}^{d} \rightarrow \mathbb{R}_+, \quad&& \mathcal{Y}(X) = \Tr\left[X\right], \quad&&& \partial y = \Tr\left[\partial X\right].\nonumber
\end{align}
}%
We then have
\begin{equation}
    f(L) = \mathcal{Y}\Bigl( \mathcal{S} \Bigl[ \mathcal{H} \bigl( \mathcal{P} \bigl[ \mathcal{H} \left(L\right) \bigr] \bigr) \Bigr] \Bigr) \quad \text{or} \quad f = \mathcal{Y}\; \circ \; \mathcal{S} \; \circ \; \mathcal{H} \; \circ \; \mathcal{P} \; \circ \; \mathcal{H}.\nonumber
\end{equation}
Working backwards, we define the following functions
\begin{align}
    g_1 &= \mathcal{Y}: \quad S \mapsto y \nonumber\\
    g_2 &= \mathcal{Y} \; \circ \; \mathcal{S}: \quad H_2 \mapsto y \nonumber\\
    g_3 &= \mathcal{Y} \; \circ \; \mathcal{S} \; \circ \; \mathcal{H}: \quad P \mapsto y\nonumber\\
    g_4 &= \mathcal{Y} \; \circ \; \mathcal{S} \; \circ \; \mathcal{H} \; \circ \; \mathcal{P}: \quad H_1 \mapsto y\nonumber\\
    g_5 &= \mathcal{Y} \; \circ \; \mathcal{S} \; \circ \; \mathcal{H} \; \circ \; \mathcal{P} \; \circ \; \mathcal{H} = f: \quad L \mapsto y\nonumber
\end{align}
Applying the chain rule yields
\begin{align}
    \textcolor{myorange}{\pdv{g_2(H_2)}{[H_2]_{ij}}} = \pdv{\mathcal{Y}(S)}{[H_2]_{ij}} = \pdv{\mathcal{Y}(\mathcal{S}(H_2))}{[H_2]_{ij}} &= \mathrm{Tr}\left[ (\pdv{\mathcal{Y}(S)}{S})^\top \pdv{\mathcal{S}(H_2)}{[H_2]_{ij}} \right]
    = \mathrm{Tr}\left[ \pdv{\mathcal{S}(H_2)}{[H_2]_{ij}} \right],\nonumber
\end{align}
where we used $\pdv{\mathcal{Y}(S)}{S} = I$ in the last equality. We then obtain
\begin{align}
    \textcolor{mygreen}{\pdv{g_3(P)}{[P]_{ij}}} = \pdv{g_2(H_2)}{[P]_{ij}} = \pdv{g_2(\mathcal{H}(P))}{[P]_{ij}} &= \mathrm{Tr}\left[ \textcolor{myorange}{\left(\pdv{g_2(H_2)}{H_2}\right)^\top} \pdv{\mathcal{H}(P)}{[P]_{ij}} \right]\nonumber
\end{align}
\begin{align}
    \textcolor{myviolet}{\pdv{g_4(H_1)}{[H_1]_{ij}}} = \pdv{g_3(P)}{[H_1]_{ij}} = \pdv{g_3(\mathcal{P}(H_1))}{[H_1]_{ij}} &= \mathrm{Tr}\left[ \textcolor{mygreen}{\left(\pdv{g_3(P)}{P}\right)^\top} \pdv{\mathcal{P}(H_1)}{[H_1]_{ij}} \right]\nonumber
\end{align}
\begin{align}
    \pdv{g_5(L)}{[L]_{ij}} = \pdv{g_4(H_1)}{[L]_{ij}} = \pdv{g_4(\mathcal{H}(L))}{[L]_{ij}} &= \mathrm{Tr}\left[ \textcolor{myviolet}{\left(\pdv{g_4(H_1)}{H_1}\right)^\top} \pdv{\mathcal{H}(L)}{[L]_{ij}} \right]\nonumber
\end{align}
At this point, we are left with the task of computing the elementary derivative matrices (in black). We can do so by recognizing the following identities, where $E^{ij}$ is a matrix such that $E^{ij}(i,j) = 1$ at index $(i,j)$ and zero everywhere else:
\begin{align}
    \pdv{\mathcal{H}(L)}{[L]_{ij}} &= E^{ij} L^\top + L \left(E^{ij}\right)^\top \nonumber \\
    \pdv{\mathcal{P}(H_1)}{[H_1]_{ij}} &= B E^{ij} A \nonumber\\
    \pdv{\mathcal{H}(P)}{[P]_{ij}} &= \left(E^{ij}\right)^\top P + P^\top E^{ij} \nonumber\\
    \left(\pdv{\mathcal{S}(H_2)}{[H_2]_{ij}}\right) S + S \left( \pdv{\mathcal{S}(H_2)}{[H_2]_{ij}} \right) &= E^{ij}.\nonumber
\end{align}
The matrices on the right-hand side above are available from the forward pass, and the partial derivative of $\mathcal{S}$ with respect to $\left[ H_2 \right]_{ij}$ is found by solving the corresponding Lyapunov equation in the last row. Finally, the gradient matrix is obtained by arranging the entries at their respective indices, as done in the backward step:
\begin{equation}
    \pdv{f(L)}{L} = \left[ \pdv{g_5(L)}{[L]_{ij}} \right].\nonumber
\end{equation}
The overall gradient of the Gelbrich distance results from elementary calculations. 

\subsubsection{Nonconvexity of the Parametrized Gelbrich Distance}
\label{ch:convexity_param_gelbrich_dist}
We provide examples showing the nonconvexity of the parametrized (squared) Gelbrich distance between two mean-covariance pairs, as defined in Definition \ref{ch:gaussian_param_wassdist}, with respect to the parameter $L$. Again, we proceed by offering a counterexample.

\paragraph{Gelbrich Distance.} Consider the two mean-covariance pairs
\begin{align}
    \mu_\mathbb{P} = (1.0, 0.6) &\text{ and } \Sigma_\mathbb{P} = \begin{bmatrix}
        0.1 & 0.0 \\
        0.0 & 1.0
    \end{bmatrix} \nonumber \\
    \mu_\mathbb{Q} = (0.8, 0.6) &\text{ and } \Sigma_\mathbb{Q} = \begin{bmatrix}
        10.0 & 0.0 \\
        0.0 & 1.0
    \end{bmatrix}, \nonumber
\end{align}
and the two $L$ matrices
\begin{align}
    L_1 = \begin{bmatrix}
        0.2 & 0\\
        0.2 & 1.9
    \end{bmatrix} \text{ and } L_2 = \begin{bmatrix}
        0.6 & 0\\
        0.8 & 0.5
    \end{bmatrix}.\nonumber
\end{align}
To shorten the notation, let $N_\mathbb{P}=(\mu_\mathbb{P},\Sigma_\mathbb{P})$ and $N_\mathbb{Q}=(\mu_\mathbb{Q},\Sigma_\mathbb{Q})$. Applying the formula in Definition~\ref{def:calc_parametrized_gelbrich_distance} yields
\begin{align*}
    g(N_\mathbb{P},N_\mathbb{Q};L_1) &= 0.5675, \\
    g(N_\mathbb{P},N_\mathbb{Q};L_2) &= 1.3142, \\
    g(N_\mathbb{P},N_\mathbb{Q};0.5\cdot L_1 + 0.5 \cdot L_2) &= 1.0636,
\end{align*}
indicating that
\begin{equation}
    g(N_\mathbb{P},N_\mathbb{Q};0.5\cdot L_2+0.5\cdot L_1) > 0.5\cdot g(N_\mathbb{P},N_\mathbb{Q};L_1) + 0.5\cdot g(N_\mathbb{P},N_\mathbb{Q};L_2), \nonumber
\end{equation}
and proving that the Gelbrich distance is not convex.

\paragraph{Squared Gelbrich Distance.} Similarly, consider the two mean-covariance pairs
\begin{align}
    \mu_\mathbb{P} = (0.4, 0.6) &\text{ and } \Sigma_\mathbb{P} = \begin{bmatrix}
        0.1 & 0.0 \\
        0.0 & 1.0
    \end{bmatrix} \nonumber \\
    \mu_\mathbb{Q} = (0.4, 0.4) &\text{ and } \Sigma_\mathbb{Q} = \begin{bmatrix}
        10.0 & 0.0 \\
        0.0 & 1.0
    \end{bmatrix}, \nonumber
\end{align}
and the two $L$ matrices
\begin{align}
    L_1 = \begin{bmatrix}
        0.7 & 0\\
        0.4 & 1.9
    \end{bmatrix} \text{ and } L_2 = \begin{bmatrix}
        0.9 & 0\\
        0.9 & 0.6
    \end{bmatrix}. \nonumber
\end{align}
Again, let $N_\mathbb{P}=(\mu_\mathbb{P},\Sigma_\mathbb{P})$ and $N_\mathbb{Q}=(\mu_\mathbb{Q},\Sigma_\mathbb{Q})$. Proceeding as before, we obtain
\begin{align}
    g(N_\mathbb{P},N_\mathbb{Q};L_1)^2 &= 3.9720, \nonumber\\
    g(N_\mathbb{P},N_\mathbb{Q};L_2)^2 &= 4.4900, \nonumber\\
    g(N_\mathbb{P},N_\mathbb{Q};0.5\cdot L_1 + 0.5 \cdot L_2)^2 &= 4.4865, \nonumber
\end{align}
indicating that
\begin{equation}
    g(N_\mathbb{P},N_\mathbb{Q};0.5\cdot L_2+0.5\cdot L_1)^2 > 0.5\cdot g(N_\mathbb{P},N_\mathbb{Q};L_1)^2 + 0.5\cdot g(N_\mathbb{P},N_\mathbb{Q};L_2)^2, \nonumber
\end{equation}
and proving that the squared Gelbrich distance is not convex.    

\section{Numerical Experiments Details}
\label{app:numerics_details}
This appendix provides further details concerning the numerical experiments presented in Section~\ref{ch:numerics}. 

The complete pseudocode used in the numerical experiments is provided in Algorithm~\ref{alg:complete}. Compared to Algorithm~\ref{alg:learning_ambiguity} reported in Section~\ref{ch:algorithm}, Algorithm~\ref{alg:complete} enhances numerical stability by clipping the values of $J_\phi(\theta_i)$ between a lower bound $\underline{\nabla}_J$ and an upper bound $\overline{\nabla}_J$; similarly, it clips the eigenvalues of $\theta_{i+1}^\prime (\theta_{i+1}^\prime)^\top$ between $ \underline{\lambda}_M$ and $\overline{\lambda}_M$.

Table~\ref{tab:hyperparameters_general} reports the values of the hyperparameters used for all numerical procedures.


\begin{algorithm}[htbp]
    \renewcommand{\baselinestretch}{1.2}\selectfont
    \DontPrintSemicolon
    \SetAlgoLined
    \KwIn{initial guess $\theta_0 = I_d$, samples $\hat{\mathcal{D}}_J =\{\hat{\xi}_j\}_{j=1}^J$, step sizes $\{\alpha_i\}_{i\in\N}>0$, $\alpha_i>0$}
    \KwOut{$\theta^\star$, $\hat{w}_{\theta^\star}$}
    Bootstrap $n_b$ reference distributions $\{\hat{\mathbb{P}}_k\}_{k=1}^{n_b}$ by resampling $\hat{\mathcal{D}}_J$ with replacement\\
    Find $\varepsilon$ as the $1-\beta$ quantile of $\{d(\hat{\mathbb{P}}_k, \hat{\mathbb{P}}; \theta_0)\}_{k=1}^{n_b}$\\
    \For{$i \leftarrow 1$ \KwTo {\normalfont\texttt{maxiter}}}{
    Solve conic program with $A(\theta_i)$, $b(\theta_i)$, $c(\theta_i)$ for $z^\star(\theta_i)$ \\
    Differentiate through conic program for $J_\mathcal{S}(\theta_i)^\top c(\theta_i)$ \\
    Calculate $d(\hat{\mathbb{P}}_k, \hat{\mathbb{P}};\theta_i)$ for all $k\in \left[n_b\right]$ and obtain $J_e(\theta_i)\in\J_{e}(\theta_i)$\\
    Compute $J_\varphi(\theta_i) = J_\mathcal{S}(\theta_i)^\top c(\theta_i) + \mathcal{S}(\theta_i)^\top J_c(\theta_i) + 2\lambda_\mathrm{p} \max \{0,e(\theta_i)\} J_e (\theta_i)$\\
    Clip the values of $J_\varphi(\theta_i)$ between $\left[\underline{\nabla}_J, \overline{\nabla}_J\right]$\\
    Update $\theta^\prime_{i+1} = \theta_i - \bar{\alpha} \nabla J_\varphi(\theta_i)_\mathrm{clip}$\\
    Clip the eigenvalues of $M=\theta_{i+1}^\prime (\theta_{i+1}^\prime)^\top$ between $\left[ \underline{\lambda}_M, \, \overline{\lambda}_M \right]$\\
    Find $\theta_{i+1}$ as the lower triangular Cholesky factorization of $M$\\
    Update the conic program parameters $A(\theta_{i+1}), b(\theta_{i+1}), c(\theta_{i+1})$\\
       \If{$\left(\varphi(\theta_i) - \varphi(\theta_{i+1})\right) / \varphi(\theta_i) < {\normalfont\texttt{tol}}$}{
           \textbf{break}
       }
    }
    \caption{Loss-aware Distributionally Robust Optimization (complete)}
    \label{alg:complete}
\end{algorithm}

\begin{table}[htbp]
    \centering
    \begin{tabular}{l l}
    \hline
    Parameter & Value \\
    \hline
    $\alpha_i\equiv\bar{\alpha}$ & $1 \times 10^{-4}$ \\
    $\mathrm{tol}$ & $1 \times 10^{-6}$ \\
    $\lambda_\mathrm{p}$ & $10$ \\
    $\eta_\mathrm{p}$ & $100.0$ \\
    ${\normalfont\texttt{maxiter}}$ & $1 \times 10^6$ \\
    $\overline{\nabla}_J$ & $1000$ \\
    $\underline{\nabla}_J$ & $-1000$ \\
    $\overline{\lambda}_M$ & $1 \times 10^6$ \\
    $\underline{\lambda}_M$ & $1 \times 10^{-6}$ \\
    \hline
    \end{tabular}
    \vspace{0.2cm}
    \caption{List of Hyperparameters used for all numerical examples.}
    \label{tab:hyperparameters_general}
\end{table}

\subsection{Portfolio optimization example}
\label{app:numerics_details:portfolioopt}

We state the generation process of the Gaussian distributions used in Subsection~\ref{numerics:portfolioopt} and the means and covariances that resulted for the multi-experiment analysis. Note that each Gaussian distribution is resampled $10$ times to generate different datasets and thus strengthen the expressiveness of the results. For $k=3$, $\overline{\mu}=1.0$, $\underline{\mu}=-1.0$, $\overline{\sigma} = 0.1$, and $\underline{\sigma}=0.01$, the Gaussian distributions are generated as 
\begin{equation}
    \mu \sim \mathcal{U}(\underline{\mu},\overline{\mu})^k
\end{equation}
and
\begin{align}
    &\Tilde{\Sigma} \sim \mathcal{U}(\underline{\sigma},\overline{\sigma})^{k\times k}\\
    &\Sigma = \Tilde{\Sigma} \Tilde{\Sigma}^\top + 10^{-6} I_k. \label{eq:true_gaussian_cov_generation}
\end{align}

For \(i=1,\dots,50\), the \(i\)-th Gaussian distribution is defined as $\mathcal{N}(\mu_k, \Sigma_k)$, where \(\mu_i\) and  \(\Sigma_i\) are reported in Table~\ref{tab:experiments_dists_gaussian_cvar}.

\begin{table}
\centering
\caption{List of Gaussian distributions (means and covariance matrices) used in the experiments.}
\vspace{0.1cm}
{\scriptsize
\begin{minipage}{0.44\textwidth}
    \begin{tabular}{|c|c|c|}
    \hline
    \multirow{2}{*}{Dist.} & \multirow{2}{*}{Mean $\mu_i$} & \multirow{2}{*}{Covariance Matrix $\Sigma_i$} \\
                                   &                                  &                                    \\ \hline
    1 & $\begin{pmatrix}-0.99 \\ 0.74 \\ -0.51 \\ \end{pmatrix}$ & $\begin{pmatrix}0.010 & 0.014 & 0.010 \\ 0.014 & 0.020 & 0.015 \\ 0.010 & 0.015 & 0.012 \\ \end{pmatrix}$ \\ 
    2 & $\begin{pmatrix}-0.98 \\ 0.00 \\ -0.01 \\ \end{pmatrix}$ & $\begin{pmatrix}0.002 & 0.003 & 0.005 \\ 0.003 & 0.004 & 0.007 \\ 0.005 & 0.007 & 0.011 \\ \end{pmatrix}$ \\ 
    3 & $\begin{pmatrix}-0.90 \\ 0.01 \\ 0.04 \\ \end{pmatrix}$ & $\begin{pmatrix}0.006 & 0.006 & 0.007 \\ 0.006 & 0.006 & 0.007 \\ 0.007 & 0.007 & 0.009 \\ \end{pmatrix}$ \\ 
    4 & $\begin{pmatrix}-0.85 \\ 0.56 \\ -0.12 \\ \end{pmatrix}$ & $\begin{pmatrix}0.020 & 0.005 & 0.009 \\ 0.005 & 0.001 & 0.002 \\ 0.009 & 0.002 & 0.005 \\ \end{pmatrix}$ \\ 
    5 & $\begin{pmatrix}-0.75 \\ -0.48 \\ -0.19 \\ \end{pmatrix}$ & $\begin{pmatrix}0.012 & 0.009 & 0.010 \\ 0.009 & 0.011 & 0.007 \\ 0.010 & 0.007 & 0.010 \\ \end{pmatrix}$ \\ 
    6 & $\begin{pmatrix}-0.74 \\ -0.00 \\ 0.20 \\ \end{pmatrix}$ & $\begin{pmatrix}0.016 & 0.013 & 0.008 \\ 0.013 & 0.011 & 0.006 \\ 0.008 & 0.006 & 0.007 \\ \end{pmatrix}$ \\ 
    7 & $\begin{pmatrix}-0.68 \\ 0.14 \\ -0.25 \\ \end{pmatrix}$ & $\begin{pmatrix}0.005 & 0.004 & 0.007 \\ 0.004 & 0.007 & 0.009 \\ 0.007 & 0.009 & 0.013 \\ \end{pmatrix}$ \\ 
    8 & $\begin{pmatrix}-0.68 \\ -1.00 \\ -0.57 \\ \end{pmatrix}$ & $\begin{pmatrix}0.019 & 0.008 & 0.016 \\ 0.008 & 0.004 & 0.007 \\ 0.016 & 0.007 & 0.015 \\ \end{pmatrix}$ \\ 
    9 & $\begin{pmatrix}-0.64 \\ -0.20 \\ 0.79 \\ \end{pmatrix}$ & $\begin{pmatrix}0.004 & 0.005 & 0.006 \\ 0.005 & 0.008 & 0.009 \\ 0.006 & 0.009 & 0.010 \\ \end{pmatrix}$ \\ 
    10 & $\begin{pmatrix}-0.56 \\ 0.74 \\ -0.59 \\ \end{pmatrix}$ & $\begin{pmatrix}0.002 & 0.002 & 0.004 \\ 0.002 & 0.004 & 0.004 \\ 0.004 & 0.004 & 0.010 \\ \end{pmatrix}$ \\ 
    11 & $\begin{pmatrix}-0.53 \\ -0.14 \\ -0.82 \\ \end{pmatrix}$ & $\begin{pmatrix}0.004 & 0.008 & 0.008 \\ 0.008 & 0.015 & 0.016 \\ 0.008 & 0.016 & 0.018 \\ \end{pmatrix}$ \\ 
    12 & $\begin{pmatrix}-0.50 \\ 0.89 \\ -0.62 \\ \end{pmatrix}$ & $\begin{pmatrix}0.009 & 0.010 & 0.007 \\ 0.010 & 0.010 & 0.007 \\ 0.007 & 0.007 & 0.009 \\ \end{pmatrix}$ \\ 
    13 & $\begin{pmatrix}-0.44 \\ -0.08 \\ -0.76 \\ \end{pmatrix}$ & $\begin{pmatrix}0.003 & 0.005 & 0.005 \\ 0.005 & 0.014 & 0.012 \\ 0.005 & 0.012 & 0.011 \\ \end{pmatrix}$ \\ 
    14 & $\begin{pmatrix}-0.34 \\ -0.19 \\ 0.15 \\ \end{pmatrix}$ & $\begin{pmatrix}0.010 & 0.011 & 0.012 \\ 0.011 & 0.013 & 0.014 \\ 0.012 & 0.014 & 0.018 \\ \end{pmatrix}$ \\ 
    15 & $\begin{pmatrix}-0.32 \\ -0.08 \\ 0.87 \\ \end{pmatrix}$ & $\begin{pmatrix}0.016 & 0.015 & 0.012 \\ 0.015 & 0.014 & 0.011 \\ 0.012 & 0.011 & 0.010 \\ \end{pmatrix}$ \\ 
    16 & $\begin{pmatrix}-0.27 \\ 0.19 \\ -0.22 \\ \end{pmatrix}$ & $\begin{pmatrix}0.005 & 0.005 & 0.008 \\ 0.005 & 0.010 & 0.015 \\ 0.008 & 0.015 & 0.023 \\ \end{pmatrix}$ \\ 
    17 & $\begin{pmatrix}-0.27 \\ -0.60 \\ -0.82 \\ \end{pmatrix}$ & $\begin{pmatrix}0.006 & 0.010 & 0.006 \\ 0.010 & 0.016 & 0.011 \\ 0.006 & 0.011 & 0.013 \\ \end{pmatrix}$ \\ 
    18 & $\begin{pmatrix}-0.22 \\ 0.19 \\ 0.03 \\ \end{pmatrix}$ & $\begin{pmatrix}0.005 & 0.007 & 0.004 \\ 0.007 & 0.012 & 0.008 \\ 0.004 & 0.008 & 0.005 \\ \end{pmatrix}$ \\ 
    19 & $\begin{pmatrix}-0.20 \\ 0.43 \\ -0.44 \\ \end{pmatrix}$ & $\begin{pmatrix}0.013 & 0.010 & 0.009 \\ 0.010 & 0.011 & 0.008 \\ 0.009 & 0.008 & 0.008 \\ \end{pmatrix}$ \\ 
    20 & $\begin{pmatrix}-0.17 \\ 0.44 \\ -1.00 \\ \end{pmatrix}$ & $\begin{pmatrix}0.007 & 0.006 & 0.009 \\ 0.006 & 0.011 & 0.015 \\ 0.009 & 0.015 & 0.020 \\ \end{pmatrix}$ \\ 
    21 & $\begin{pmatrix}-0.16 \\ 0.85 \\ -0.45 \\ \end{pmatrix}$ & $\begin{pmatrix}0.006 & 0.007 & 0.007 \\ 0.007 & 0.015 & 0.010 \\ 0.007 & 0.010 & 0.008 \\ \end{pmatrix}$ \\ 
    22 & $\begin{pmatrix}-0.13 \\ -0.95 \\ 0.10 \\ \end{pmatrix}$ & $\begin{pmatrix}0.014 & 0.012 & 0.010 \\ 0.012 & 0.012 & 0.011 \\ 0.010 & 0.011 & 0.011 \\ \end{pmatrix}$ \\ 
    23 & $\begin{pmatrix}-0.11 \\ 0.14 \\ 0.82 \\ \end{pmatrix}$ & $\begin{pmatrix}0.017 & 0.012 & 0.009 \\ 0.012 & 0.015 & 0.011 \\ 0.009 & 0.011 & 0.008 \\ \end{pmatrix}$ \\ 
    24 & $\begin{pmatrix}-0.03 \\ -0.50 \\ 0.44 \\ \end{pmatrix}$ & $\begin{pmatrix}0.013 & 0.009 & 0.008 \\ 0.009 & 0.007 & 0.006 \\ 0.008 & 0.006 & 0.007 \\ \end{pmatrix}$ \\ 
    25 & $\begin{pmatrix}-0.02 \\ -0.53 \\ -0.86 \\ \end{pmatrix}$ & $\begin{pmatrix}0.011 & 0.011 & 0.012 \\ 0.011 & 0.014 & 0.012 \\ 0.012 & 0.012 & 0.015 \\ \end{pmatrix}$ \\ 
    \hline
    \end{tabular}
\end{minipage}
}%
\hfill
{\scriptsize
\begin{minipage}{0.43\textwidth}
    \begin{tabular}{|c|c|c|}
    \hline
    \multirow{2}{*}{Dist.} & \multirow{2}{*}{Mean $\mu_i$} & \multirow{2}{*}{Covariance Matrix $\Sigma_i$} \\
                                   &                                  &                                    \\ \hline
    26 & $\begin{pmatrix}0.00 \\ 0.08 \\ -0.73 \\ \end{pmatrix}$ & $\begin{pmatrix}0.003 & 0.006 & 0.005 \\ 0.006 & 0.017 & 0.011 \\ 0.005 & 0.011 & 0.016 \\ \end{pmatrix}$ \\ 
    27 & $\begin{pmatrix}0.10 \\ 0.43 \\ 0.21 \\ \end{pmatrix}$ & $\begin{pmatrix}0.016 & 0.011 & 0.009 \\ 0.011 & 0.009 & 0.006 \\ 0.009 & 0.006 & 0.005 \\ \end{pmatrix}$ \\ 
    28 & $\begin{pmatrix}0.10 \\ 0.42 \\ -0.42 \\ \end{pmatrix}$ & $\begin{pmatrix}0.017 & 0.006 & 0.010 \\ 0.006 & 0.002 & 0.003 \\ 0.010 & 0.003 & 0.012 \\ \end{pmatrix}$ \\ 
    29 & $\begin{pmatrix}0.13 \\ -0.14 \\ -0.81 \\ \end{pmatrix}$ & $\begin{pmatrix}0.002 & 0.005 & 0.005 \\ 0.005 & 0.020 & 0.016 \\ 0.005 & 0.016 & 0.015 \\ \end{pmatrix}$ \\ 
    30 & $\begin{pmatrix}0.15 \\ 0.06 \\ 0.53 \\ \end{pmatrix}$ & $\begin{pmatrix}0.002 & 0.003 & 0.004 \\ 0.003 & 0.004 & 0.006 \\ 0.004 & 0.006 & 0.012 \\ \end{pmatrix}$ \\ 
    31 & $\begin{pmatrix}0.30 \\ -0.91 \\ -0.96 \\ \end{pmatrix}$ & $\begin{pmatrix}0.018 & 0.008 & 0.013 \\ 0.008 & 0.004 & 0.006 \\ 0.013 & 0.006 & 0.010 \\ \end{pmatrix}$ \\ 
    32 & $\begin{pmatrix}0.39 \\ 0.63 \\ -0.31 \\ \end{pmatrix}$ & $\begin{pmatrix}0.008 & 0.004 & 0.008 \\ 0.004 & 0.011 & 0.012 \\ 0.008 & 0.012 & 0.015 \\ \end{pmatrix}$ \\ 
    33 & $\begin{pmatrix}0.39 \\ 0.28 \\ -0.74 \\ \end{pmatrix}$ & $\begin{pmatrix}0.016 & 0.017 & 0.011 \\ 0.017 & 0.022 & 0.013 \\ 0.011 & 0.013 & 0.008 \\ \end{pmatrix}$ \\ 
    34 & $\begin{pmatrix}0.40 \\ -0.37 \\ -0.76 \\ \end{pmatrix}$ & $\begin{pmatrix}0.007 & 0.007 & 0.004 \\ 0.007 & 0.010 & 0.007 \\ 0.004 & 0.007 & 0.004 \\ \end{pmatrix}$ \\ 
    35 & $\begin{pmatrix}0.41 \\ 0.32 \\ -0.86 \\ \end{pmatrix}$ & $\begin{pmatrix}0.014 & 0.016 & 0.017 \\ 0.016 & 0.019 & 0.021 \\ 0.017 & 0.021 & 0.023 \\ \end{pmatrix}$ \\ 
    36 & $\begin{pmatrix}0.46 \\ 0.39 \\ 0.88 \\ \end{pmatrix}$ & $\begin{pmatrix}0.010 & 0.007 & 0.011 \\ 0.007 & 0.018 & 0.014 \\ 0.011 & 0.014 & 0.016 \\ \end{pmatrix}$ \\ 
    37 & $\begin{pmatrix}0.48 \\ 0.51 \\ -0.07 \\ \end{pmatrix}$ & $\begin{pmatrix}0.010 & 0.011 & 0.007 \\ 0.011 & 0.012 & 0.006 \\ 0.007 & 0.006 & 0.004 \\ \end{pmatrix}$ \\ 
    38 & $\begin{pmatrix}0.55 \\ -0.12 \\ 0.72 \\ \end{pmatrix}$ & $\begin{pmatrix}0.009 & 0.009 & 0.008 \\ 0.009 & 0.011 & 0.005 \\ 0.008 & 0.005 & 0.013 \\ \end{pmatrix}$ \\ 
    39 & $\begin{pmatrix}0.56 \\ 0.21 \\ 0.42 \\ \end{pmatrix}$ & $\begin{pmatrix}0.014 & 0.012 & 0.006 \\ 0.012 & 0.013 & 0.007 \\ 0.006 & 0.007 & 0.004 \\ \end{pmatrix}$ \\ 
    40 & $\begin{pmatrix}0.66 \\ -0.28 \\ 0.41 \\ \end{pmatrix}$ & $\begin{pmatrix}0.012 & 0.010 & 0.013 \\ 0.010 & 0.010 & 0.011 \\ 0.013 & 0.011 & 0.016 \\ \end{pmatrix}$ \\ 
    41 & $\begin{pmatrix}0.69 \\ -0.68 \\ 0.12 \\ \end{pmatrix}$ & $\begin{pmatrix}0.014 & 0.010 & 0.004 \\ 0.010 & 0.009 & 0.006 \\ 0.004 & 0.006 & 0.007 \\ \end{pmatrix}$ \\ 
    42 & $\begin{pmatrix}0.70 \\ 0.77 \\ 0.53 \\ \end{pmatrix}$ & $\begin{pmatrix}0.016 & 0.012 & 0.016 \\ 0.012 & 0.009 & 0.012 \\ 0.016 & 0.012 & 0.018 \\ \end{pmatrix}$ \\ 
    43 & $\begin{pmatrix}0.73 \\ 0.71 \\ 0.62 \\ \end{pmatrix}$ & $\begin{pmatrix}0.015 & 0.009 & 0.014 \\ 0.009 & 0.013 & 0.009 \\ 0.014 & 0.009 & 0.013 \\ \end{pmatrix}$ \\ 
    44 & $\begin{pmatrix}0.75 \\ 0.94 \\ 0.74 \\ \end{pmatrix}$ & $\begin{pmatrix}0.012 & 0.010 & 0.013 \\ 0.010 & 0.009 & 0.011 \\ 0.013 & 0.011 & 0.020 \\ \end{pmatrix}$ \\ 
    45 & $\begin{pmatrix}0.79 \\ -0.34 \\ 0.64 \\ \end{pmatrix}$ & $\begin{pmatrix}0.018 & 0.010 & 0.013 \\ 0.010 & 0.007 & 0.009 \\ 0.013 & 0.009 & 0.016 \\ \end{pmatrix}$ \\ 
    46 & $\begin{pmatrix}0.81 \\ -0.86 \\ 0.35 \\ \end{pmatrix}$ & $\begin{pmatrix}0.016 & 0.014 & 0.016 \\ 0.014 & 0.013 & 0.014 \\ 0.016 & 0.014 & 0.017 \\ \end{pmatrix}$ \\ 
    47 & $\begin{pmatrix}0.81 \\ -0.85 \\ -0.45 \\ \end{pmatrix}$ & $\begin{pmatrix}0.014 & 0.013 & 0.014 \\ 0.013 & 0.014 & 0.015 \\ 0.014 & 0.015 & 0.017 \\ \end{pmatrix}$ \\ 
    48 & $\begin{pmatrix}0.91 \\ 0.54 \\ -0.75 \\ \end{pmatrix}$ & $\begin{pmatrix}0.011 & 0.011 & 0.011 \\ 0.011 & 0.012 & 0.012 \\ 0.011 & 0.012 & 0.013 \\ \end{pmatrix}$ \\ 
    49 & $\begin{pmatrix}0.91 \\ -0.58 \\ 0.66 \\ \end{pmatrix}$ & $\begin{pmatrix}0.018 & 0.015 & 0.005 \\ 0.015 & 0.014 & 0.004 \\ 0.005 & 0.004 & 0.002 \\ \end{pmatrix}$ \\ 
    50 & $\begin{pmatrix}0.93 \\ 0.09 \\ 0.95 \\ \end{pmatrix}$ & $\begin{pmatrix}0.009 & 0.013 & 0.008 \\ 0.013 & 0.019 & 0.014 \\ 0.008 & 0.014 & 0.013 \\ \end{pmatrix}$ \\ 
    \hline 
    \end{tabular}
\end{minipage}
}%
\label{tab:experiments_dists_gaussian_cvar}
\end{table}

In addition to the visualizations of the single experiment example in Section~\ref{numerics:portfolioopt}, Figure~\ref{fig:portfolioopt_singleexample_details} shows the underlying true distribution $\mathbb{P}$ and the samples drawn from it (left panel). Further, the corresponding change in portfolio allocations is shown in the right panel. The center panel again visualizes how the parametrization of the ambiguity set changes over the iterations.
\begin{figure}[htbp]
    \centering
    \includegraphics[width=0.9\textwidth]{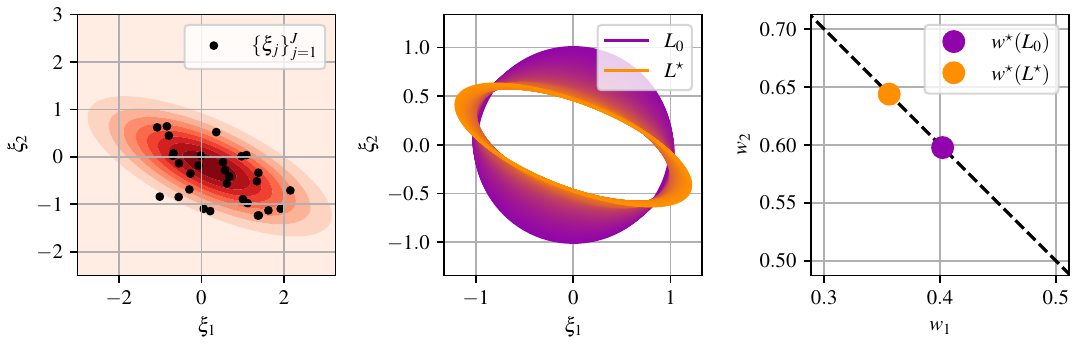}
    \caption{Underlying true distribution $\mathbb{P}$ (red contour) and samples (black), unit-cost ellipses defined by the matrices $L_i$ in the upper-level optimization, and optimal decisions before ($w^\star(L_0)$) and after ($w^\star(L^\star)$) optimization.}
    \label{fig:portfolioopt_singleexample_details}
\end{figure}



%
%
%
%
%
\subsection{Linear regression example}
\label{app:numerics_details:linreg}
We state the generation process used for the multi-experiments in Subsection~\ref{numerics:linear:regression}: $x$ in \eqref{eq:linreg_datagen_model} is uniformly sampled in the interval $[-10,10]$, $w$ is uniformly sampled the interval $[-10,10]$ and $\sigma^2$ is uniformly sampled in the interval $[500, 1000]$. This procedure leads to the $10$ underlying true models stated in Table~\ref{tab:linreg_models}.
\begin{table}[htbp]
    \small
    \centering
    \begin{adjustbox}{max width=\textwidth}
    \begin{tabular}{ccc}
    \hline
    Model & Weight ($w$) & Variance ($\sigma^2$) \\
    \hline
    1 & -6.7805 & 564.285 \\
    2 & -5.8464 & 625.412 \\
    3 & -2.7811 & 699.653 \\
    4 & -1.3851 & 710.190 \\
    5 & -0.0144 & 783.458 \\
    6 & 4.3483 & 846.372 \\
    7 & 6.3163 & 915.492 \\
    8 & 7.1061 & 922.537 \\
    9 & 8.5174 & 932.399 \\
    10 & 8.9350 & 978.001 \\
    \hline
    \end{tabular}
    \end{adjustbox}
    \vspace{0.2cm}
    \caption{Summary of the 10 models used for linear regression. For each model, \(x \sim \mathcal{U}(-10,10)\) and \(y = wx + e\) with \(e \sim \mathcal{N}(0,\sigma^2)\).}
    \label{tab:linreg_models}
\end{table}

Figure~\ref{fig:linreg_l_f_separate_1} presents the average worst-case and out-of-sample errors across all experiments in Subsection~\ref{numerics:linear:regression}. Notably, both error measures decrease after bilevel optimization (right) compared to before (left).

\begin{figure}[htbp]
    \centering
    \includegraphics[width=0.9\textwidth]{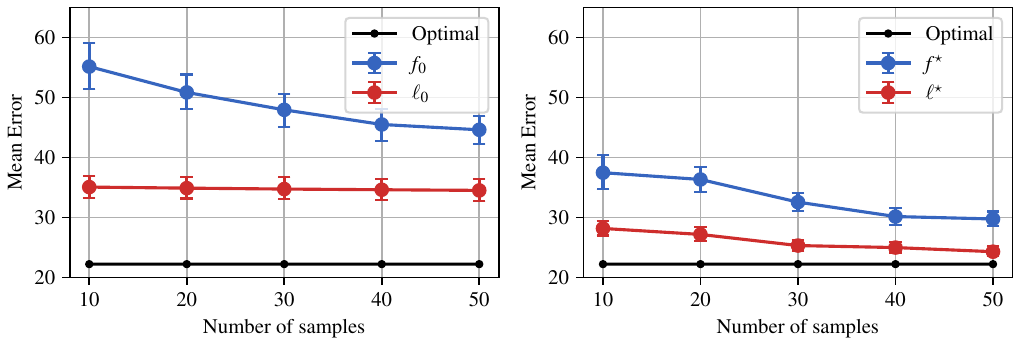}
    \caption{Average initial (left) and final (right) objectives (worst-case objective in blue and out-of-sample performance in red) of the distributionally robust linear regression model with absolute error over multiple experiments. The black line shows the average best possible expected error over all experiments.}
    \label{fig:linreg_l_f_separate_1}
\end{figure}

\section{Further Experimental Results}
\label{app:further_results}
This appendix provides additional results for different settings of the portfolio optimization and linear regression tasks.
\normalsize

\subsection{Portfolio optimization with Gaussian reference distribution (higher-dimensional case)}
\label{app:furtherresults:portfolioopt_gaussian_higherdim}

\mfnew{We investigate the distributionally robust portfolio optimization task with Gaussian reference distribution from Subsection~\ref{numerics:portfolioopt}, but now consider a higher number of assets, i.e., $k=10$. The means and covariances are generated using the same distributions as shown in Appendix~\ref{app:numerics_details:portfolioopt}. We generate $10$ independent experiments, each using a different true Gaussian distribution $\mathbb{P}$. For each distribution, $10$ distinct datasets are sampled, resulting in $100$ different trials for each sample size $J\in\{10,\ldots,100\}$. To evaluate the effectiveness of our procedure, we monitor the relative improvement of the worst-case objective and of the out-of-sample performance, as defined in Subsection~\ref{numerics:portfolioopt}.


Figure~\ref{fig:multiple_examples_true_gaussian_higherdim} illustrates the average improvement in both the worst-case objective and out-of-sample performance of the portfolio from before (left) to after (right) bilevel optimization. The displayed error bars represent bootstrapped confidence intervals, obtained by resampling the results to estimate the mean improvements. The results of this preliminary investigation seem to suggest that the advantages of our approach may scale favorably with dimension.

\begin{figure}[htbp]
    \centering
    \includegraphics[width=0.95\linewidth]{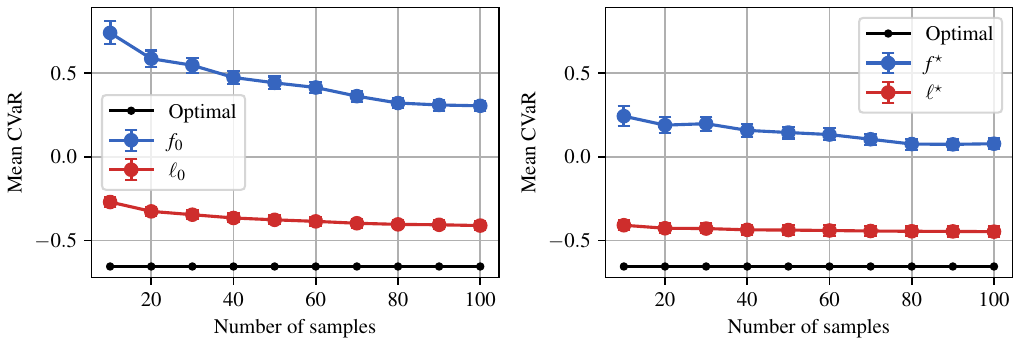}
    \caption{Average results for the higher-dimensional portfolio optimization example using a Gaussian reference distribution (worst-case objective in blue and out-of-sample performance in red) when considering multiple experiments with underlying true discrete distributions, before (left) and after (right) the bilevel optimization.}
    \label{fig:multiple_examples_true_gaussian_higherdim}
\end{figure}
}
\subsection{Portfolio optimization with empirical reference distribution}
\label{app:furtherresults:portfolioopt}
We investigate the distributionally robust portfolio optimization task when employing the empirical distribution
$
    \hat{\mathbb{P}} = \frac{1}{J} \sum_{j=1}^J \delta_{\hat{\xi}_j}
$
as the center of the optimal transport ambiguity set. 
To assess the results, we again consider the same metrics described in Subsection~\ref{numerics:portfolioopt}.
\subsubsection{Discrete true distribution}
\label{app:furtherresults:portfolioopt:discrete}
We generate $10$ true discrete distributions defined as
\begin{equation}
    \mathbb{P} = \sum_{i=1}^{10} p_i \delta_{x_i}
\end{equation}
where the $10$ different support points $x_i \in \mathbb{R}^k$ with $k = 3$ are uniformly sampled from the cube $[-1,1]^3$, and the weights $p_i$ are drawn from a Dirichlet distribution. For each true distribution, we generate 10 independent datasets via resampling. This entire process is repeated for $J \in \{10,...,100\}$, keeping the underlying true distributions fixed. We use the type-$1$ Mahalanobis distance to parametrize the transportation cost.

Figure~\ref{fig:multiple_examples_true_discrete} illustrates the average improvement in both the worst-case objective and out-of-sample performance of the portfolio from before (left) to after (right) bilevel optimization. The displayed error bars represent bootstrapped confidence intervals, obtained by resampling the results to estimate the mean improvements.

\begin{figure}[htbp]
    \centering
    \includegraphics[width=0.95\linewidth]{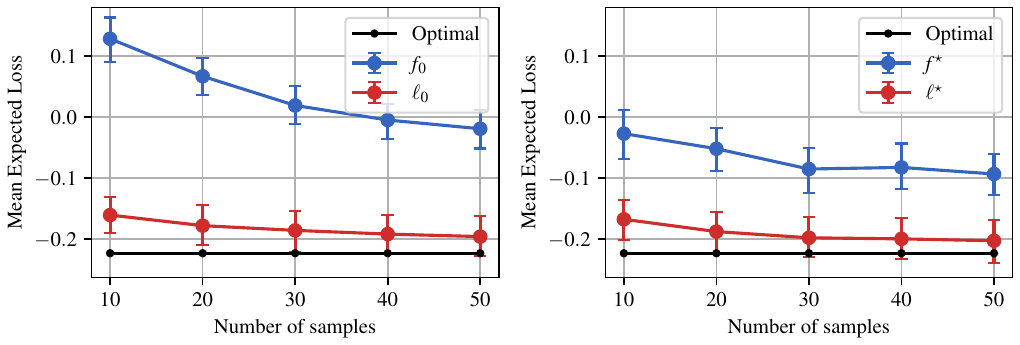}
    \caption{Average results (worst-case objective in blue and out-of-sample performance in red) when considering multiple experiments with underlying true discrete distributions, before (left) and after (right) the bilevel optimization.}
    \label{fig:multiple_examples_true_discrete}
\end{figure}

\subsubsection{Gaussian mixture model}
\label{app:furtherresults:portfolioopt:gmm}
We consider the case where the underlying true distribution is a Gaussian Mixture Model (GMM), defined as
\begin{equation}
    \mathbb{P} = \sum_{i=1}^3 \alpha_i \cdot \mathcal{N}(\mu_i, \Sigma_i),
\end{equation}
where the means $\mu_i$ are uniformly sampled in the cube $[-1,1]^3$ and the covariances $\Sigma_i$ are generated following \eqref{eq:true_gaussian_cov_generation}. The weights $\alpha_i$ are sampled from a Dirichlet distribution. Again, each distribution $\mathbb{P}$ is resampled $10$ times to generate independent datasets per experiment. This procedure is repeated for sample sizes $J \in \{10,...,100\}$, using the same underlying true distributions.

Figure~\ref{fig:multiple_examples_true_gmm} illustrates the average improvement in both the worst-case objective and out-of-sample performance of the portfolio from before (left) to after (right) bilevel optimization.
\begin{figure}[htbp]
    \centering
    \includegraphics[width=0.95\linewidth]{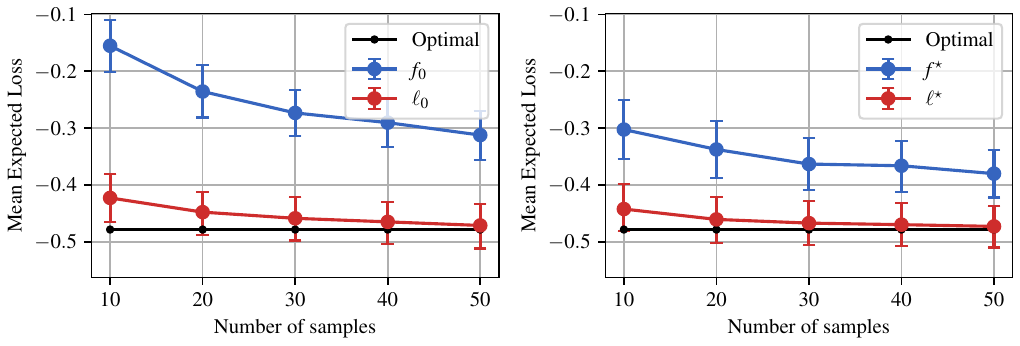}
    \caption{Average results (worst-case objective in blue and out-of-sample performance in red) when considering multiple experiments with underlying true GMM distributions, before (left) and after (right) the bilevel optimization.}
    \label{fig:multiple_examples_true_gmm}
\end{figure}


\subsection{Linear regression with squared error}
\label{app:furtherresults:linreg:squared}
We consider a linear regression task with squared loss function $\ell_2(w,\xi) = \left((-w,1)^\top \xi\right)^2$, and we use the type-$2$ Mahalanobis distance as a parametrization for the transportation cost. As in the $\ell_1$ error case reported in Subsection~\ref{numerics:linear:regression}, we first show the results of a single problem instance.
We consider a linear model corrupted by zero-mean Gaussian noise
\begin{equation}
    y = w x + \mathrm{e}, \text{ where } \mathrm{e} \sim \mathcal{N}(0,\sigma) \text{ and } x\sim \mathcal{U}(-10.0,10.0). \label{eq:linreg_datagen_model:appendix}
\end{equation}
The weight $w$ is deterministic and set to $1$, and the standard deviation of the noise is $\sigma=10$.

Figure~\ref{fig:linreg_single_example_squarederr} shows the true distribution of $\xi$ and the $J=20$ samples from it (left), the unit transportation cost ellipses across iterations (center), and the effect of our procedure on the linear model (right).
\begin{figure}[htbp]
    \centering
    \includegraphics[width=\textwidth]{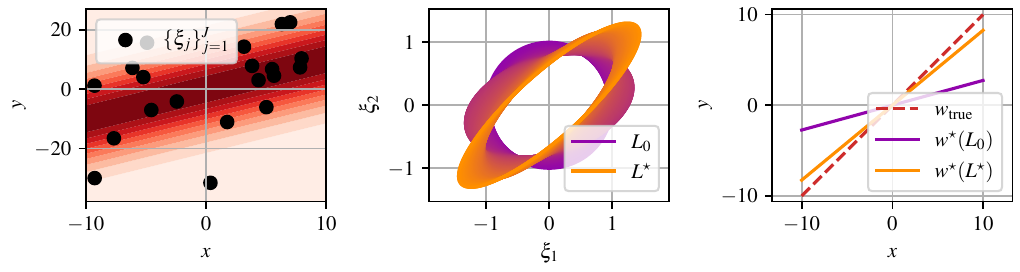}
    \caption{Underlying true distribution $\mathbb{P}$ (red contour) and samples (black), unit-cost ellipses defined by the matrices $L_i$ in the upper-level optimization, and optimal decisions before ($w^\star(L_0)$) and after ($w^\star(L^\star)$) optimization when using the squared error $\ell_2$. We set $n_b = 10$.}
    \label{fig:linreg_single_example_squarederr}
\end{figure}

Figure~\ref{fig:linreg_wc_true_improvements_squarederror} (left) shows the expected absolute error with respect to the worst-case distribution $\mathbb{Q}^\star(L)$, i.e., $\mathrm{e}_{\mathrm{wc}}(L) = \mathbb{E}_{\xi\sim\mathbb{Q}^\star(L)}\left( \ell_2(w^\star(L),\xi) \right) $ which corresponds to the upper level objective, and the expected absolute error on the true data generating process $\mathbb{P}$ approximated with $10^7$ samples, i.e., $\mathrm{e}_{\mathrm{oos}}(L) = \mathbb{E}_{\xi\sim\mathbb{P}}\left( \ell_2(w^\star(L),\xi) \right)$. Both metrics decrease across iterations, indicating a reduction in conservatism of the decision.
Additionally, we carry out a multi-experiment analysis by adopting the same models described in Appendix~\ref{app:numerics_details:linreg}. To evaluate the results, we again monitor the evolution of the relative improvement of the worst-case objective
\begin{equation}
    f_0 = \mathbb{E}_{\xi\sim\mathbb{Q}^\star(L_0)}\left[ \left( \bar{w}^\star(L_0)^\top \xi \right)^2 \right] \text{ and } f^\star = \mathbb{E}_{\xi\sim\mathbb{Q}^\star(L^\star)}\left[ \left( \bar{w}^\star(L^\star)^\top \xi \right)^2 \right] \nonumber
\end{equation}
and of the out-of-sample performance
\begin{equation}
    \ell_0 = \frac{1}{n_\mathrm{oos}} \sum_{i=1}^{N_\mathrm{oos}} \left( \bar{w}^\star(L_0)^\top \xi_i \right)^2 \text{ and } \ell^\star = \frac{1}{n_\mathrm{oos}} \sum_{i=1}^{N_\mathrm{oos}} \left( \bar{w}^\star(L^\star)^\top \xi_i \right)^2, \nonumber
\end{equation}
with $n_\mathrm{oos}=10^6$, across iterations. 
Figure~\ref{fig:linreg_wc_true_improvements_squarederror} (right) and Figure~\ref{fig:linreg_l_f_separate_2} jointly indicate a reduction in the conservatism of the DRO solution induced by the proposed bilevel procedure, corroborating the results discussed in Subsection~\ref{numerics:linear:regression}.

\begin{figure}[htbp]
    \centering
    \includegraphics[width=\linewidth]{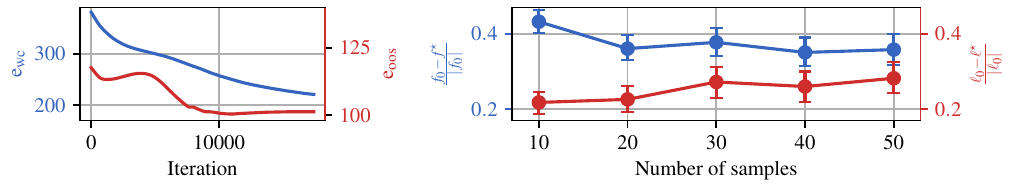}
    \caption{Expected squared error on the worst-case distribution $e_{\text{wc}}$ and expected squared error on the true data generating process $e_{\text{oos}}$ (left). Average improvement over multiple experiments w.r.t. the number of samples (right).}
    \label{fig:linreg_wc_true_improvements_squarederror}
\end{figure}




\begin{figure}[htbp]
    \centering
    \includegraphics[width=0.95\textwidth]{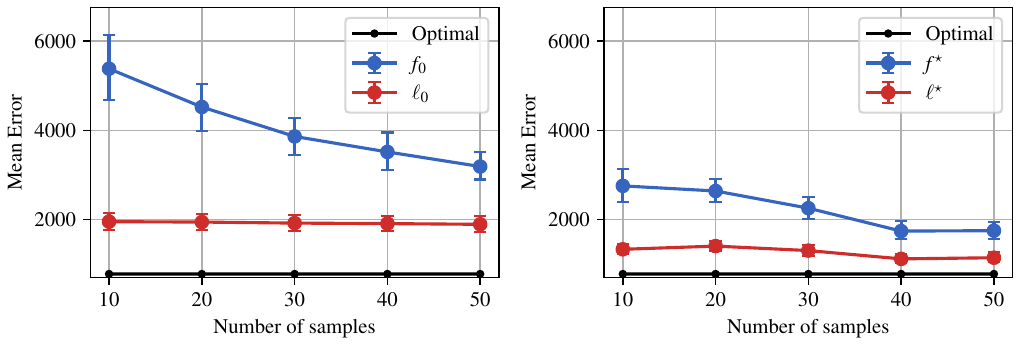}
    \caption{Average initial (left) and final (right) objectives (worst-case objective in blue and out-of-sample performance in red) of the distributionally robust linear regression model with squared error over multiple experiments. The black line shows the average best possible expected error over all experiments.}
    \label{fig:linreg_l_f_separate_2}
\end{figure}

\vfill

\section{Computational Complexity and Sensitivity Analysis of Hyperparameters}
\label{app:complexity_and_sensitivity}
\subsection{Computational Complexity of Differentiation Procedure}

\rzbegin

We analyze the complexity of the differentiation procedure described in Appendix \ref{app:diff_ot}.

In general, computing the hypergradient requires the differentiation of two terms: i) the primal-dual solution map of a conic program, and ii) the penalty term. As we outlined in Appendix \ref{app:diff_through_conic_programs}, differenting the solution of a conic program involves a set of matrix operations and the resolution of a linear system of equations (compare \cref{eq:app:differentiating:forward} for the forward derivative, and \cref{eq:app:differentiating:backward} for the adjoint), whose dimension equals $n+2m$, where $n$ denotes the dimension of the primal variable $x$, and $m$ is the dimension of the dual vector $y$ and the slack variable $s$. Generally, solving the linear system is the most computationally intensive operation, scaling with the cube of the dimension (for example when the system is solved using Gaussian elimination).

The number $J$ of samples may affect the number of constraints in the convex reformulation of the DRO problem, and potentially also the dimension of the primal variable. This means that more samples lead to a linear system of larger dimension and potentially to a greater computational complexity. Specifically, the primal decision variable $x$ scales linearly with the number $J$ of samples for the type-$1$ Mahalanobis distance in the regression example, and similarly in the type-$2$ Mahalanobis distance in both the portfolio optimization example and in the regression example. The slack variable $s$ and the dual variable $y$ both scale linearly with $J$ in the regression example with type-$2$ Mahalanobis distance, where the dependency is increased to $2J$ on the same example if the chosen distance is type-$1$ Mahalanobis. Moreover, these variables scale with $(d+2)^J$ in the case of type-$2$ Mahalanobis distance on the portfolio example. In all other examples, the sample size does not affect the dimension of the variables.

Next, differentiating the penalty function requires the differentiation of $n_b$ distances, computed on $n_b$ different samples in each iteration. However, because of the $\max$ term in $\varphi_\text{p}$, this is only required whenever the current design $\theta_i$ does not meet the required confidence level and thus $e(\theta_i)> \beta$. Differentiating the Mahalanobis distance requires solving a linear program whose dimension scales linearly with the number of samples (see Appendix \ref{ch:discrete_param_wassdist}). Differentiating the Gelbrich distance can be done by solving a set of $d^2$ Lyapunov equations (compare Appendix \ref{ch:gaussian_param_wassdist}), each scaling linearly with dimension $d$ of $\theta$. \rzend

\subsection{Sensitivity Analysis of Penalty Parameters}

\rzbegin We examine the influence of the penalty parameters $\lambda_\text{p}$ and $\eta_\text{p}$ from \cref{eq:bilevel_solvable,eq:penalty} on both the achieved performance improvement and adherence to the coverage constraint \cref{eq:constraint:boot}. For this analysis, we focus on the portfolio optimization problem described in \cref{numerics:portfolioopt}, setting the dimension to $k=10$ and utilizing $J=50$ samples. We use $n_{b}=20$ bootstrapped distributions and choose $\beta=0.1$ and $\gamma=0.05$. We execute the optimization algorithm multiple times ($10$ experiments with different underlying distributions, each experiment repeated $10$ times) across varying values of $\lambda_\text{p}$ and $\eta_\text{p}$.

As shown in \cref{fig:app:sensitivity:gaussian_dim_10}, the mean relative improvement remains consistently high across a broad spectrum of $(\lambda_\text{p}, \eta_\text{p})$ configurations. This consistency suggests that the algorithm’s performance is largely insensitive to the exact choice of penalty parameters, underscoring its practical robustness. \rzend

\begin{figure}[!ht]
\centering
\includegraphics[width=\linewidth]{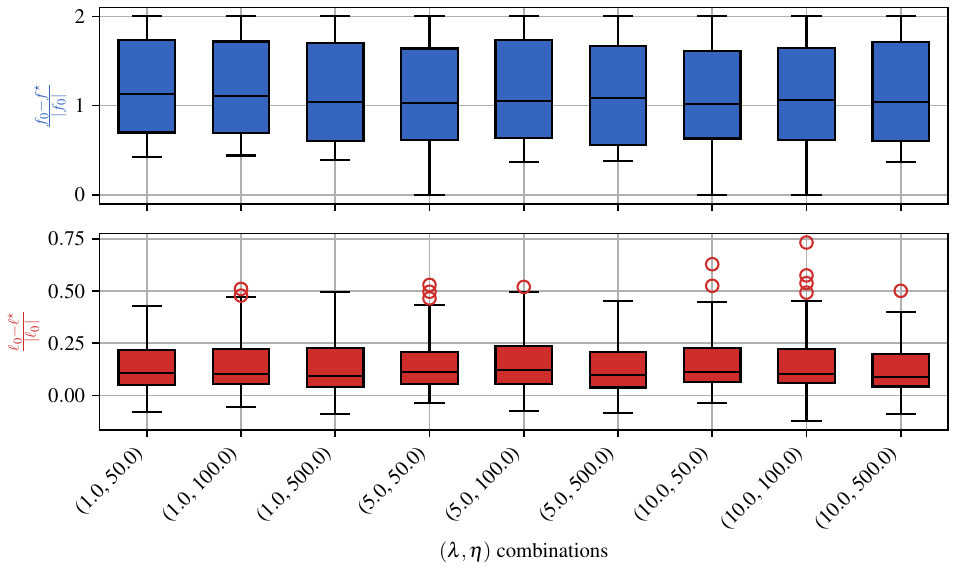}
\caption{Mean relative improvement for different combinations of $(\lambda_\text{p},\eta_\text{p})$.} \label{fig:app:sensitivity:gaussian_dim_10}
\end{figure}

\rzbegin Furthermore, \cref{fig:app:sensitivity:gaussian_dim_10_violation} shows the values of the coverage constraint \cref{constraint:upper} violation after reformulation across different parameter configurations. We observe that the constraint is consistently satisfied for all tested combinations of $(\lambda_\text{p}, \eta_\text{p})$, with only minor violations occurring. While smaller values of the penalty parameter (e.g., $\lambda_\text{p} = 1$) lead to slightly higher values of the constraint expression, they still remain within acceptable bounds, indicating that the penalization in \cref{eq:bilevel_solvable} is sufficient even in those cases. \rzend

\begin{figure}[!ht]
\centering
\includegraphics[width=\linewidth]{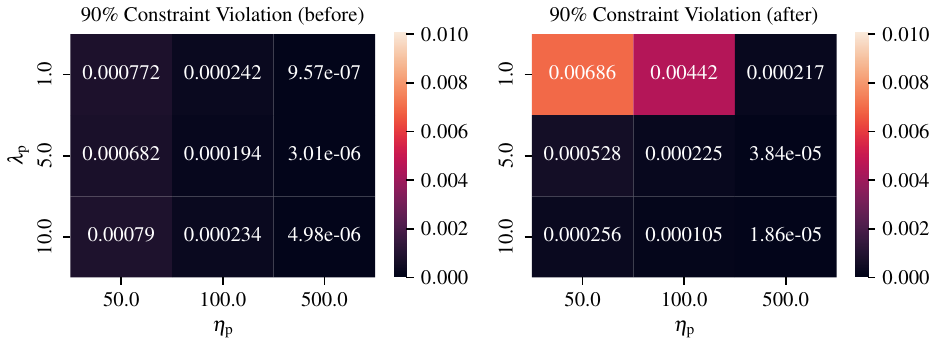}
\caption{Coverage constraint \cref{constraint:upper} violations (90th percentile) for different combinations of $(\lambda_\text{p},\eta_\text{p})$ over the $100$ experiments.} \label{fig:app:sensitivity:gaussian_dim_10_violation}
\end{figure}

\rzbegin These findings confirm that the proposed algorithm demonstrates strong robustness with respect to a wide and meaningful range of penalty parameter choices. Consequently, it can be confidently applied in safety-critical scenarios where hyperparameter tuning may be restricted or infeasible. \rzend

\end{appendices}

\end{document}